\newcommand{\A}{{\mathcal A}}
\newcommand{\B}{{\mathcal B}}
\newcommand{\C}{{\mathcal C}}
\newcommand{\D}{{\mathcal D}}
\newtheorem{theorem}{Theorem}[section]
\newtheorem{defn}[theorem]{Definition}
\newtheorem{lemma}[theorem]{Lemma}
\newtheorem{corollary}[theorem]{Corollary}
\newtheorem{proposition}[theorem]{Proposition}
\newenvironment{proof}
    {\pagebreak[1]{\narrower\noindent {\bf
    Proof:\quad\nopagebreak}}}{$\hfill\square$}
\begin{document}

\author{ \textbf{Wesley Calvert} \\
Department of Mathematics \\
University of Notre Dame\\
wcalvert@nd.edu\\
\textbf{Douglas Cenzer} \\
Department of Mathematics \\
University of Florida\\
cenzer@math.ufl.edu\\
\textbf{Valentina Harizanov} \\
Department of Mathematics \\
George Washington University\\
harizanv@gwu.edu\\
\textbf{Andrei Morozov} \\
Sobolev Institute of Mathematics\\
morozov@math.nsc.ru}
\title{Effective Categoricity of Equivalence Structures\\
FINAL DRAFT}
\maketitle

\section{Introduction}

We consider only countable structures for computable languages with
universe $\omega$. We identify
sentences with their G\"{o}del codes. The \emph{atomic} \emph{diagram} of a
structure $\mathcal{A}$ for $L$ is the set of all quantifier-free sentences
in $L_{A}$, $L$ expanded by constants for the elements in $A$, which are
true in $\mathcal{A}$. A structure is \emph{computable} if its atomic
diagram is computable. In other words, a structure $\mathcal{A}$ is
computable if there is an algorithm that determines for every
quantifier-free formula $\theta (x_{0},\ldots ,x_{n-1})$ and every
sequence $(a_{0},\ldots ,a_{n-1})\in A^{n}$, whether $\mathcal{A}\vDash \theta
(a_{0},\ldots ,a_{n-1})$. The \emph{elementary}\textit{\ }\emph{diagram} of 
\emph{\ }$\mathcal{A}$ is the set of all sentences of $L_{A}$ that are true
in $\mathcal{A}$. A structure $\mathcal{A}$ is \emph{decidable} if its
elementary diagram is computable. For $n>0$, the $n$-\emph{diagram} of
\emph{\ }$\mathcal{A}$ is the set of all $\Sigma _{n}$ sentences of
$L_{A}$ that are true in $\mathcal{A}$. A structure is
$n$-\emph{decidable} if its $n$-diagram is computable. 

A computable structure $\mathcal{A}$ is \emph{computably categorical} if for
every computable isomorphic copy $\mathcal{B}$ of $\mathcal{A}$, there is a 
\emph{computable} isomorphism from $\mathcal{A}$ onto $\mathcal{B}$. For
example, the ordered set of rational numbers is computably categorical,
while the ordered set of natural numbers is not. Moreover, Goncharov and
Dzgoev \cite{GD} and Remmel \cite{R} independently proved that a computable
linear ordering is computably categorical if and only if it has only
finitely many successors. They also established that a computable Boolean
algebra is computably categorical if and only if it has finitely many atoms
(see also LaRoche \cite{L}). Miller \cite{Miller} proved that no computable
tree of height $\omega $ is computably categorical. Lempp, McCoy, Miller and
Solomon \cite{L-M-M-S} characterized computable trees of finite height that
are computably categorical. Nurtazin \cite{N} and Metakides and Nerode \cite
{MN} established that a computable algebraically closed field of finite
transcendence degree over its prime field is computably categorical.
Goncharov \cite{G1} and Smith \cite{Sm} characterized computably categorical
abelian $p $-groups as those that can be written in one of the following
forms: $(Z(p^{\infty }))^{l}\oplus G$ for $l \in \omega \cup \{\infty \}$
and $G $ finite, or $(Z(p^{\infty }))^{n}\oplus G\oplus (Z(p^{k}))^{\infty }$
, where $n,k\in \omega $ and $G$ is finite. Goncharov, Lempp and Solomon 
\cite{G-L-S} proved that a computable, ordered, abelian group is computably
categorical if and only if it has finite rank. Similarly, they showed that a
computable, ordered, Archimedean group is computably categorical if and only
if it has finite rank.

For any computable ordinal $\alpha $, we say that a computable
structure $\mathcal{A}$ is \emph{$\Delta _{\alpha }^{0}$ categorical}
if for every computable structure $\mathcal{B}$ isomorphic to
$\mathcal{A}$, there is a $\Delta _{\alpha }^{0}$ isomorphism form
$\mathcal{A}$ onto $\mathcal{B}$.  Lempp, McCoy, Miller and Solomon
\cite{L-M-M-S} proved that for every $n\geq 1$, there is a computable
tree of finite height that is $\Delta_{n+1}^{0}$-categorical but not
$\Delta _{n}^{0}$-categorical. We say that $\mathcal{A}$ is
\emph{relatively computably categorical} if for every structure
$\mathcal{B}$ isomorphic to $\mathcal{A}$, there is an isomorphism
that is computable relative to the atomic diagram of
$\mathcal{B}$. Similarly, a computable $\mathcal{A}$ is
\emph{relatively $\Delta _{\alpha }^{0}$ categorical} if for every
$\mathcal{B}$ isomorphic to $\mathcal{A}$, there is an isomorphism
that is $\Delta _{\alpha }^{0}$ relative to the atomic diagram of
$\mathcal{B}$. Clearly, a relatively $\Delta _{\alpha }^{0}$
categorical structure is $\Delta _{\alpha }^{0}$ categorical. We are
especially interested in the case when $\alpha =2$. McCoy \cite{McCoy}
\ characterized, under certain restrictions, all $\Delta _{2}^{0}$
categorical and relatively $\Delta _{2}^{0}$ categorical linear
orderings and Boolean algebras. For example, a computable Boolean
algebra is relatively $\Delta _{2}^{0}$ categorical if \ and only if
it can be expressed as a finite direct sum $c_{1}\vee \ldots \vee
c_{n}$, where each $c_{i}$ is either atomless, an atom, or $1$-atom.
Using an enumeration result of Selivanov \cite{S}, Goncharov \cite{G2}
showed that there is a computable structure that is computably
categorical but not relatively computably categorical. Using a
relativized version of Selivanov's enumeration result, Goncharov,
Harizanov, Knight, McCoy, Miller and Solomon \cite{G-H-K-M-M-S} showed
that for each computable successor ordinal $\alpha $, there is a
computable structure that is $\Delta _{\alpha }^{0}$ categorical, but
not relatively $\Delta_{\alpha}^{0}$ categorical.

\label{sec:background}There are syntactical conditions that are equivalent
to relative $\Delta _{\alpha }^{0}$ categoricity. The conditions involve the
existence of certain families of formulas, that is, certain Scott families.
Scott families come from Scott's Isomorphism Theorem, which says that for a
countable structure $\mathcal{A}$, there is an $L_{\omega _{1}\omega }$
sentence whose countable models are exactly the isomorphic copies of 
$\mathcal{A}$. A \emph{Scott family }for a structure $\mathcal{A}$ is a
countable family $\Phi $ of $L_{\omega _{1}\omega }$ formulas, possibly with
finitely many fixed parameters from $A$, such that:

(i) Each finite tuple in $\mathcal{A}$ satisfies some $\psi \in \Phi $;

(ii)\ If $\overrightarrow{a}$, $\overrightarrow{b}$ are tuples in
$\A$, of the same length, satisfying the \emph{same} formula in $\Phi $, then
there is an automorphism of $\mathcal{A}$ that maps
$\overrightarrow{a}$ to $\overrightarrow{b}$.

A \emph{formally c.e.\ Scott family} is a c.e.\ Scott family
consisting of finitary existential formulas. A \emph{formally $\Sigma
_{\alpha }^{0}$ Scott family} is a $\Sigma _{\alpha }^{0}$ Scott
family consisting of computable $\Sigma _{\alpha }$ formulas. Roughly
speaking, computable infinitary formulas are $L_{\omega _{1}\omega }$
formulas in which the infinite disjunctions and conjunctions are taken
over computably enumerable (c.e.) sets. We can classify computable
formulas according to their complexity in the following way: A
computable $\Sigma _{0}$ or $\Pi _{0}$ formula is a finitary
quantifier-free formula. Let $\alpha $ $>0$ be a computable ordinal. A
computable $\Sigma _{\alpha }$ formula is a c.e.\ disjunction of
formulas $(\exists \overrightarrow{u})\theta
(\overrightarrow{x},\overrightarrow{u})$, where $\theta $ is
computable $\Pi _{\beta }$ for some $\beta <\alpha $. A computable
$\Pi _{\alpha }$ formula is a c.e.\ conjunction of formulas $(\forall
\overrightarrow{u})\theta (\overrightarrow{x},\overrightarrow{u})$,
where $\theta $ is computable $\Sigma _{\beta}$ for some $\beta
<\alpha $. Precise definition of computable infinitary formulas
involves assigning indices to the formulas, based on Kleene's system
of ordinal notations (see \cite{A-K}). The important property of these
formulas is given in the following theorem due to Ash.

\begin{theorem}[Ash]
For a structure $\mathcal{A}$, if $\theta (\overrightarrow{x})$ is a
computable $\Sigma _{\alpha }$ formula, then the set $\{\overrightarrow{a}:
\mathcal{A}\models \theta (\overrightarrow{a})\}$ is $\Sigma _{\alpha }^{0}$
relative to $\mathcal{A}$.
\end{theorem}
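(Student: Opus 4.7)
My plan is to argue by effective transfinite induction on $\alpha $, proving simultaneously the dual statement that if $\theta (\overrightarrow{x})$ is a computable $\Pi _{\alpha }$ formula then $\{\overrightarrow{a}:\mathcal{A}\models \theta (\overrightarrow{a})\}$ is $\Pi _{\alpha }^{0}$ relative to $\mathcal{A}$. Throughout, I would keep track of uniformity in a Kleene index for $\theta$ (and a notation for $\alpha $), so that the inductive step can freely invoke the hypothesis on infinitely many subformulas of varying complexity. For the base case $\alpha =0$, a computable $\Sigma _{0}$ (or $\Pi _{0}$) formula is finitary and quantifier-free, so deciding $\mathcal{A}\models \theta (\overrightarrow{a})$ reduces to finitely many queries to the atomic diagram of $\mathcal{A}$, and the reduction is computable uniformly in the G\"odel numbers of $\theta$ and of $\overrightarrow{a}$.

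For the inductive step, suppose $\theta (\overrightarrow{x})$ is computable $\Sigma _{\alpha }$ with $\alpha >0$, so by definition there is a c.e.\ set of pairs $(i,\psi _{i})$ with $\psi _{i}=(\exists \overrightarrow{u}_{i})\,\theta _{i}(\overrightarrow{x},\overrightarrow{u}_{i})$ and $\theta _{i}$ computable $\Pi _{\beta _{i}}$ for some $\beta _{i}<\alpha $; indices for $\theta _{i}$ and notations for $\beta _{i}$ can be recovered computably from a Kleene index for $\theta$. Then
\[
\mathcal{A}\models \theta (\overrightarrow{a})\iff (\exists i)(\exists \overrightarrow{u})\,\bigl[\,i\text{ is enumerated}\wedge \mathcal{A}\models \theta _{i}(\overrightarrow{a},\overrightarrow{u})\bigr].
\]
By the simultaneous inductive hypothesis applied uniformly in $i$, the predicate $\mathcal{A}\models \theta _{i}(\overrightarrow{a},\overrightarrow{u})$ is $\Pi _{\beta _{i}}^{0}$ relative to $\mathcal{A}$. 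A relative $\Sigma _{\alpha }^{0}$ set is exactly one of the form $(\exists n)\varphi (n)$ where, effectively in $n$, the predicate $\varphi (n)$ is $\Pi _{\beta }^{0}$ for some $\beta <\alpha $; bundling $i$ and $\overrightarrow{u}$ into a single existential quantifier gives the required description. The $\Pi _{\alpha }$ case is dual, using countable conjunction and a universal quantifier over tuples from $A$.

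The main obstacle is bookkeeping the uniformity, especially at limit ordinals where the $\beta _{i}$ are unbounded below $\alpha $. What makes the argument go through is the fact that computable $\Sigma _{\alpha }$ and $\Pi _{\alpha }$ formulas are indexed through Kleene's $\mathcal{O}$ in such a way that a notation for $\alpha $ together with an index for $\theta$ yields computable access to notations for all the relevant $\beta _{i}$ and to indices for the subformulas $\theta _{i}$; see \cite{A-K} for the precise indexing. Once that setup is in place, the assembly of an appropriate $\Sigma _{\alpha }^{0}$ or $\Pi _{\alpha }^{0}$ description of the satisfaction set relative to $\mathcal{A}$ is routine arithmetic-hierarchy bookkeeping.
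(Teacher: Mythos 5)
The paper states this result purely as background, attributes it to Ash, and gives no proof of its own (it points the reader to \cite{A-K} for the precise indexing of computable infinitary formulas). Your simultaneous effective transfinite induction on the $\Sigma_\alpha$ and $\Pi_\alpha$ cases, with uniformity in a Kleene index carried through the induction and the base case handled by finitely many queries to the atomic diagram, is exactly the standard argument from that reference, and it is correct.
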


\noindent An analogous result holds for computable $\Pi _{\alpha }$ formulas.

It is easy to see that if $\mathcal{A}$ has a formally c.e.\ Scott family,
then $\mathcal{A}$ is relatively computably categorical. In general,
if 
$\mathcal{A}$ has a formally $\Sigma _{\alpha }^{0}$ Scott family,
then 
$\mathcal{A}$ is relatively $\Delta _{\alpha }^{0}$ categorical. Goncharov 
\cite{G2} showed that if $\mathcal{A}$ is 2-decidable and computably
categorical, then it has a formally c.e.\ Scott family. Ash \cite{A} showed
that, under certain decidability conditions on $\mathcal{A}$, if $\mathcal{A}
$ is $\Delta _{\alpha }^{0}$ categorical, then it has a formally $\Sigma
_{\alpha }^{0}$ Scott family. For the relative notions, the decidability
conditions are not needed. Ash, Knight, Manasse and Slaman \cite{A-K-M-S}
and Chisholm \cite{C} independently proved that a computable structure 
$\mathcal{A}$ is relatively $\Delta _{\alpha }^{0}$ categorical iff it has a
formally $\Sigma _{\alpha }^{0}$ Scott family.

Cholak, Goncharov, Khoussainov and Shore \cite{C-G-K-S} gave an example of a
computable structure that is computably categorical, but ceases to be after
naming any element of the structure. Such a structure is not relatively
computably categorical. On the other hand, Millar \cite{Millar} established
that if a structure $\mathcal{A}$ is $1$-decidable, then any expansion
of $\A$ by finitely many constants remains computably categorical.
Khoussainov and Shore \cite{K-S} proved that there is a computably
categorical structure without a formally c.e.\ Scott family whose expansion
by any finite number of constants is computably categorical. A similar
result was established by Kudinov by a quite different method. Using a
modified family of enumerations constructed by Selivanov \cite{S}, Kudinov
produced a computably categorical, $1$-decidable structure without a
formally c.e.\ Scott family.

A structure is \emph{rigid} if it does not have nontrivial
automorphisms. A computable structure is \emph{$\Delta _{\alpha }^{0}$
stable} if every isomorphism from $\mathcal{A}$ onto a computable
structure is $\Delta _{\alpha }^{0}$. If a computable structure is
rigid and $\Delta _{\alpha }^{0}$ categorical, then it is $\Delta
_{\alpha }^{0}$ stable. A \emph{defining family} for a structure
$\mathcal{A}$ is a set $\Phi $ of formulas with one free variable and
a fixed finite tuple of parameters such that:

(i)\ Every element of $A$ satisfies some formula $\psi (x)\in \Phi $;

(ii)\ No formula of $\Phi $ is satisfied by more than one element of $A$.

\noindent A defining family $\Phi $ is 
\emph{formally $\Sigma _{\alpha}^{0}$} 
if it is a $\Sigma _{\alpha }^{0}$ set of computable $\Sigma _{\alpha }$
formulas. In particular, a defining family $\Phi $ is \emph{formally c.e.}
if it is a c.e.\ set of finitary existential formulas. For a rigid
computable structure $\mathcal{A}$, there is a formally $\Sigma _{\alpha
}^{0}$ Scott family iff there is a formally $\Sigma _{\alpha }^{0}$ defining
family.

It is not known whether for a computable limit ordinal $\alpha $, there is a
computable structure that is $\Delta _{\alpha }^{0}$ categorical but not
relatively $\Delta _{\alpha }^{0}$ categorical (see \cite{G-H-K-M-M-S}). It
is also not known whether for any computable successor ordinal $\alpha $,
there is a rigid computable structure that is $\Delta _{\alpha }^{0}$
categorical but not relatively $\Delta _{\alpha }^{0}$ categorical. Another
open question is whether every $\Delta _{1}^{1}$ categorical computable
structure must be relatively $\Delta _{1}^{1}$ categorical 
(see \cite{G-H-K-S}).

In Section 2, we investigate algorithmic properties of computable
equivalence structures, their equivalence classes, and their characters. in
Section 3, we examine effective categoricity of equivalence structures. We
characterize the computably categorical equivalence structures and show that
they are all relatively computably categorical. That is, $\mathcal{A}$ is
computably categorical if and only if the following two conditions are
satisfied: 

(i) There is an upper bound on the size of the finite equivalence classes of 
$\mathcal{A}$;

(ii) There is at most one cardinal $k$ such that $\mathcal{A}$ has
infinitely many equivalence classes of size $k$. 

In Section 3, we characterize the relatively $\Delta _{2}^{0}$ categorical
equivalence structures as those with either finitely many infinite
equivalence classes or with an upper bound on the size of the finite
equivalence classes. We also consider the complexity of isomorphisms
for structures $\mathcal A$ and $\mathcal B$ such that both $Fin^{\A}$
and $Fin^{\B}$ are computable or $\Delta^0_2$. 
Finally, we show that every computable equivalence
structure is relatively $\Delta _{3}^{0}$ categorical.

\section{Computable Equivalence Structures}

An equivalence structure $\mathcal{A}=(A,E^{A})$ consists of a set with a
binary relation that is reflexive, symmetric and transitive. An equivalence
structure $\mathcal{A}$ is \emph{computable} if $A$ is a computable subset
of $\omega $ and $E$ is a computable relation. If $A$ is an infinite set
(which is usual), we can assume, without loss of generality, that
$A=\omega $. 
The $\mathcal{A}$-equivalence class of $a\in A$ is 
\begin{equation*}
\lbrack a]^{\mathcal{A}}=\{x\in A:xE^{\mathcal{A}}a\}\text{.}
\end{equation*}%
We generally omit the superscript $^{\mathcal{A}}$ when it can be inferred
from the context.

We will proceed from the simpler structures, which are computably
categorical, to the more complicated structures, which are $\Delta^0_3$
categorical but not $\Delta^0_2$ categorical.

\begin{defn}
The character $\chi(\mathcal{A})$ of an equivalence relation $\mathcal{A}$
is the set $\{\langle k,n \rangle:\ {\mathcal{A}}\ \text{has}\ \text{at
least $n$ equivalence classes of size}\ k \}$.
\end{defn}

We say that $\mathcal{A}$ has \emph{bounded} \emph{character} if there is
some finite $K$ such that all finite $A$-equivalence classes have size at
most $K$.

The following lemmas will be needed.

\begin{lemma}
\label{l1} For any computable equivalence structure $\mathcal{A}$:

\begin{enumerate}
\item[(a)] $\{\langle k,a \rangle: card([a]^{\mathcal{A}}) \leq k\}$ is a 
$\Pi^0_1$ set and $\{\langle k,a \rangle: card([a]^{\mathcal{A}}) \geq k\}$
is a $\Sigma^0_1$ set.

\item[(b)] $Inf^{\mathcal{A}} = \{a: [a]^{\mathcal{A}}\ \text{is infinite}\}$
is a $\Pi^0_2$ set and $Fin^{\mathcal{A}} = \{a: [a]^{\mathcal{A}}\ \text{is
finite}\}$ is a $\Sigma^0_2$ set.

\item[(c)] $\chi(A)$ is a $\Sigma^0_2$ set.
\end{enumerate}
\end{lemma}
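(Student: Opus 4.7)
The plan is to exploit the computability of $E^{\A}$ to produce, for each part, an explicit quantifier prefix whose matrix is a computable predicate; the complexity of the set then follows by counting quantifiers.

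For part (a), I would express $card([a]^{\A}) \geq k$ as an existential assertion: there exist pairwise distinct $b_1,\ldots,b_k \in A$ with $b_i E^{\A} a$ for each $i$. Since $E^{\A}$ and equality are computable, this matrix is computable, so the set is $\Sigma^0_1$. The dual statement $card([a]^{\A}) \leq k$ is equivalent to the negation of $card([a]^{\A}) \geq k+1$, hence $\Pi^0_1$. For part (b), I would apply (a) uniformly in $k$: the element $a$ lies in $Inf^{\A}$ iff $\forall k \, (card([a]^{\A}) \geq k)$, a $\forall \Sigma^0_1$ statement and therefore $\Pi^0_2$. The set $Fin^{\A}$ is then the complement within $A$, giving $\Sigma^0_2$.

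For part (c), I would write $\langle k,n\rangle \in \chi(\A)$ as asserting the existence of $n$ pairwise non-$E^{\A}$-equivalent elements $a_1,\ldots,a_n$ with $card([a_i]^{\A}) = k$ for each $i \leq n$. The condition ``size exactly $k$'' splits as $\Sigma^0_1 \wedge \Pi^0_1$ by part (a). Folding the $\Sigma^0_1$ witnesses (the $k$ distinct elements of each class, together with the representatives $a_1,\ldots,a_n$ themselves) into a single outer existential block, the whole formula takes the form $\exists \, \forall$ over a computable matrix, placing $\chi(\A)$ at level $\Sigma^0_2$. There is no substantive obstacle in any of these arguments; the only care needed is at this last step, where the inner existential witnesses must be pulled outward so the prefix remains $\exists\forall$ rather than creeping up to $\Sigma^0_3$.
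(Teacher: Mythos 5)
Your proposal is correct and follows essentially the same route as the paper: express $card([a])\geq k$ and $card([a])\leq k$ by explicit finite quantifier blocks over the computable relation $E^{\mathcal{A}}$, obtain $Fin^{\mathcal{A}}/Inf^{\mathcal{A}}$ as a number quantifier over these (the paper writes $Fin^{\mathcal{A}}$ as $\exists k\,(card([a])=k)$ and takes $Inf^{\mathcal{A}}$ as its complement, which is the mirror image of your presentation), and define $\chi(\mathcal{A})$ by an existential block of pairwise inequivalent representatives of exact size $k$. Your closing remark about pulling the $\Sigma^0_1$ witnesses into the outer existential block so the prefix stays $\exists\forall$ is exactly the (implicit) normalization the paper relies on.
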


\begin{proof}
(a) The condition $card([a]^{\mathcal{A}})\leq k$ holds if and only if the
statement 
\begin{equation*}
(\forall x_{1})\dotsb (\forall x_{k+1})\left( {}\right. (x_{1}Ea\ \&\ \dotsb
\ \&\ x_{k+1}\ Ea)\Rightarrow \bigvee_{i,j\neq k+1}x_{i}=x_{j}\left.
{}\right) 
\end{equation*}
is satisfied.

(b) We have $a\in Fin^{\mathcal{A}}$ if and only if 
\begin{equation*}
(\exists k)[card([a]^{\mathcal{A}})=k]
\end{equation*}
and $a\in Inf^{\mathcal{A}}$ if and only if $a\notin Fin^{\mathcal{A}}$.

(c) We have $\langle k,n\rangle \in \chi (\mathcal{A})$ if and only if 
\begin{equation*}
(\exists x_{1})\dotsb (\exists x_{n})\left( {}\right.
\bigwedge\limits_{i}card([x_{i}])=k\ \&\ \bigwedge\limits_{i\neq j}\lnot
(x_{i}Ex_{j})\left. {}\right) .
\end{equation*}
\end{proof}

We will say that a subset $K$ of $\omega \times \omega $ is a 
\emph{character} if there is some equivalence structure with character $K$. This is the
same as saying that for all $n$ and $k$, if 
\begin{equation*}
\langle k,n+1\rangle \in K\Rightarrow \langle k,n\rangle \in K.
\end{equation*}

\begin{lemma}
\label{l2a} For any $\Sigma _{2}^{0}$ character $K$, there is a computable
equivalence structure $\mathcal{A}$ with character $K$, which has infinitely
many infinite equivalence classes. Furthermore, in this structure, $\{a:[a]\ 
\text{is finite}\}$ is a $\Pi _{1}^{0}$ set.
\end{lemma}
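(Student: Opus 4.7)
The plan is to fix a computable predicate $R$ such that $\langle k,n\rangle\in K$ iff $(\exists y)(\forall z)R(k,n,y,z)$, and to build $\mathcal{A}$ on universe $\omega$ by a stage construction. We set aside dedicated infinite-target classes $I_0,I_1,\ldots$, to each of which we add a fresh element at every stage after its introduction. For each pair $\langle k,n\rangle$ we maintain a current witness $y_{k,n}$ (initially $0$) and a current ``attempt'' class, aimed at producing one class of size exactly $k$ to cover the count associated with $\langle k,n\rangle$.

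At stage $s$ we first introduce $I_s$ and add a fresh element to every $I_i$ with $i\leq s$; throughout, fresh elements are always the least natural number not yet placed, so every element of $\omega$ is eventually placed. Then for each $\langle k,n\rangle$ with $k+n\leq s$ we test whether $R(k,n,y_{k,n},z)$ holds for all $z\leq s$. If so, we add a fresh element to the current attempt class for $\langle k,n\rangle$ provided it has fewer than $k$ elements. If not, we \emph{abandon} that class by reclassifying it as an additional infinite-target class (committing to add a fresh element to it at every subsequent stage), then increment $y_{k,n}$ and install a new empty attempt class for $\langle k,n\rangle$.

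For the verification, $E$ is computable because placement of an element is permanent and reclassifying a class as infinite does not change its set of members; to decide $aEb$ we simply run the construction until both $a$ and $b$ are placed. If $\langle k,n\rangle\in K$, the least witness $y^{*}$ is never falsified, so $y_{k,n}$ stabilises at $y^{*}$ and its attempt class eventually reaches and maintains size exactly $k$, while earlier attempts have already been absorbed into the infinite pool. If $\langle k,n\rangle\notin K$, every candidate $y$ is falsified at some finite stage, so every attempt class is ultimately abandoned and contributes no class of size $k$. Since $K$ is downward closed in $n$, the number of size-$k$ classes of $\mathcal{A}$ equals $|\{n:\langle k,n\rangle\in K\}|$, giving $\chi(\mathcal{A})=K$, and the $I_i$'s supply infinitely many infinite classes. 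Finally, $Inf^{\mathcal{A}}$ is c.e.: enumerate $a$ into it at the first stage at which the class then containing $a$ is labeled infinite (either as some $I_i$ from the start or at the moment of an abandonment). Hence $Fin^{\mathcal{A}}$ is $\Pi^{0}_{1}$.

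The main obstacle is the $\Sigma^{0}_{2}$ instability of $K$: several elements can accumulate in an attempt class before it is revealed that the current witness fails, and they cannot be extracted afterwards. The ``absorb into the infinite pool'' move handles this cleanly, converting each failed finite attempt into a genuine infinite class without disturbing the equivalence relation already defined, and it is exactly what also makes the infinite part of the structure $\Sigma^{0}_{1}$-enumerable, securing the stronger $\Pi^{0}_{1}$ bound for $Fin^{\mathcal{A}}$ in addition to the general $\Sigma^{0}_{2}$ bound from Lemma~\ref{l1}.
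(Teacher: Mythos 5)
Your construction is correct and is essentially the paper's own argument: both approximate the least witness for the $\Sigma^0_2$ predicate defining $K$, build a size-$k$ class for the current guess, and absorb each falsified attempt into the (already infinite) supply of infinite classes, which is also exactly what makes $Inf^{\mathcal{A}}$ c.e.\ and hence $Fin^{\mathcal{A}}$ a $\Pi^0_1$ set. The only difference is bookkeeping --- the paper precomputes a computable set $B$ of witness quadruples and assigns one class to each of its elements, whereas you maintain a moving witness $y_{k,n}$ per pair --- and this does not change the substance of the proof.
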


\begin{proof}
Let $R$ be a computable relation such that 
\begin{equation*}
\langle k,n \rangle \in K \iff (\exists w)(\forall z) R(k,n,w,z).
\end{equation*}

\noindent We will define a set $B$ of quadruples $\langle
k,n,w,z\rangle $ such that when we look at numbers only below $z$, we
believe that $w$ is the least witness that $k,n\in K$, and such that
for other initial segments below $z$, there is some $v<w$ that could
be such a witness. Define the set $B$ as follows:
\begin{eqnarray*}
B &=&\{\langle k,n,w,z\rangle :(\forall y<z)(R(k,n,w,y))\ \& \\
(\forall v &<&w)(\exists y<z)(\lnot R(k,n,v,y))\newline
\&\  \\
(\forall y &<&z)(\exists v<w)R(k,n,v,y)\}\text{.}\ 
\end{eqnarray*}
\linebreak 

The set $B$ is a computable subset of $\omega $ with an infinite complement.
The equivalence structure $\mathcal{A}$ will consist of one class for each
element of $B$, together with an infinite family of infinite equivalence
classes. Partition $\omega \setminus B$ into two computable, infinite,
disjoint subsets. Use the first subset to define the infinitely many
infinite classes, and let the second subset be $C=\{c_{0},c_{1},\dots \}$.
The classes with representatives from $B$ are defined in stages. Let 
$B=\{b_{0},b_{1},\dots \}$ and let $b_{i}=\langle
k_{i},n_{i},w_{i},z_{i}\rangle $.

At \emph{stage }$0$, we put $\{c_{0},\dots ,c_{k_{0}-2}\}$ into the
equivalence class of $b_{0}$.

After stage $s$, we have $s$ equivalence classes with representatives 
$b_0,\dots,b_{s-1}$. For some $i < s$, the classes with representatives $b_i$
have size $k_i$, and others have been declared to be infinite and have at
least $s$ elements. These partial classes contain elements $%
c_0,\dots,c_{p(s)}$ from $C$.

At \emph{stage}\textit{\ }$s+1$, we check for all $i\leq s$ and all
$z\leq s$ whether $R(k_{i},n_{i},w_{i},z)$. If the class $[b_{i}]$ has
previously been declared to be infinite, we simply add one new element
to this class. For any other $i$ such that some
$R(k_{i},n_{i},w_{i},z)$ fails with $z<s$, we declare $[b_{i}]$ to be
infinite and add $s$ new elements from $C$ to $[b_{i}]$. If $b_{s+1}$
is not declared to be infinite, then we put $k_{s+1}-1$ elements from
$C$ into this class.

Let $\mathcal{A}$ be the structure constructed by this process. It is
clear that whenever $\langle k,n\rangle \in K$, then there will be a
unique $b_{i}=\langle k,n,w_{i},z_{i}\rangle $ such that $[b_{i}]$ has
size $k$ and that these are the only finite classes of
$\mathcal{A}$. Thus, $\chi (\mathcal{A})=K$. If $\langle k,n\rangle
\notin K$, then the class $[b_{i}]$ corresponding to $\langle
k,n\rangle $ will be infinite and we already have infinitely many
infinite equivalence classes, so, clearly, $\mathcal{A}$ has
infinitely many infinite equivalence classes.

We note that for any pair $(k,n)$, there is at most one $w$ such that for
all $z$, we have $(k,n,w,z)\in R$ and such that for some $\tilde{z}$, we
have $(k,n,w,\tilde{z})\in B$. Also, for each triple $(k,n,w)$, there is at
most one $z$ such that $(k,n,w,z)\in B$, and for each triple $(k,n,z)$ there
is at most one $w$ such that $(k,n,w,z)\in B$. Now it is clear that $[b_{i}]$
is finite if and only if $(\forall z)R(k_{i},n_{i},w_{i},z),$ and $[b_{i}]$
has $k_{i}$ elements. Further, if $k,n\in K$, then 
\begin{equation*}
|\{[b_{i}]:|[b_{i}]|=k\}|=n\text{.}
\end{equation*}
Then for $c\notin B$, $[c]$ is finite if and only if 
\begin{equation*}
(\forall i)(cEb_{i}\Rightarrow \lbrack b_{i}]\ \text{is finite})\text{.}
\end{equation*}
This completes the proof.
\end{proof}

\begin{lemma}
\label{l2b} For any $r\leq \omega $ and any bounded character $K$
(whether $K\in \Sigma _{2}^{0}$ or not), there is a computable equivalence structure 
$\A$ with character $K$, which has exactly $r$ infinite equivalence
classes. Furthermore, $\{a:[a]^{\mathcal{A}}\ \text{is finite}\}$ is a
computable set.
\end{lemma}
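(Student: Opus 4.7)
The plan is to construct $\A$ by hand, using the bounded character non-uniformly as finitely many parameters. Because $K$ has bound $N$, it is completely determined by the multiplicities $\alpha_k \in \omega \cup \{\omega\}$ for $k = 1, \ldots, N$, where $\alpha_k$ is the number of equivalence classes of size $k$ (namely $\sup\{n+1 : \la k,n\ra \in K\}$). Together with $r$, this fixed (possibly non-computable) data will be built into the construction as constants, so the resulting equivalence relation is computable even if $K$ is not.

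Split the sizes into $F = \{k \leq N : 0 < \alpha_k < \omega\}$ and $S = \{k \leq N : \alpha_k = \omega\}$, and set $T = \sum_{k \in F} k \alpha_k$, which is a finite natural number. First I would reserve the initial segment $[0,T)$ of $\omega$ for the finite-multiplicity classes: for each $k \in F$ (taken in increasing order), allocate $\alpha_k$ disjoint blocks of $k$ consecutive integers as $k$-element equivalence classes.

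Next I would fill $[T,\omega)$ using a dovetailed slot construction. Enumerate ``slots'' $\sigma_0, \sigma_1, \ldots$, where each $\sigma_m$ is tagged with a type (either a size $k \in S$, or ``$\infty$'') according to a fixed computable pattern ensuring that for each $k \in S$ infinitely many slots have type $k$, and exactly $r$ slots have type $\infty$. (If $r < \omega$, place the $\infty$-slots at positions $0,\dots,r-1$; if $r = \omega$, interleave with the $k$-typed slots via any computable diagonalization.) Then at stage $t$, for $m = 0,1,\dots,t$ in order, assign the next unused element of $[T,\omega)$ to $\sigma_m$ unless $\sigma_m$ is a finite slot already filled to capacity. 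Slot $\sigma_t$ always produces at least one cell at stage $t$, so the running total of assigned elements through stage $t$ is at least $t+1$; hence every element of $[T,\omega)$ is eventually placed. If instead $S = \emptyset$, the dovetail is avoided entirely: when $r = 0$ the universe is the finite set $[0,T)$; when $0 < r < \omega$, partition $[T,\omega)$ into $r$ infinite classes by residues modulo $r$; when $r = \omega$, partition via a computable pairing.

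To verify the lemma, one checks four things. The character of $\A$ is $K$ by construction, and $\A$ has exactly $r$ infinite classes. The equivalence relation is computable: for $n < T$, class membership is fixed by the initial allocation, and for $n \geq T$ one simulates the dovetail through stage $n - T$ (finitely many steps) to identify the slot of $n$. Finally, $Fin^{\A}$ is computable because $n < T$ forces $n \in Fin^{\A}$, and for $n \geq T$ the slot index $m$ containing $n$ can be computed, and whether $\sigma_m$ is finite depends only on $m$ through our fixed enumeration pattern. The only point that really requires attention is confirming that the dovetail gives a bijection onto $[T,\omega)$ with no gaps, which is immediate from the per-stage lower bound on new cells.
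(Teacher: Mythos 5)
Your proposal is correct and follows essentially the same route as the paper: the paper also decomposes $\A$ into three components ($r$ infinite classes, infinitely many classes of each size in a finite set, and finitely many classes of the remaining sizes), exploiting the fact that a bounded character is determined by finitely many parameters that can be hard-coded non-uniformly. You simply supply the dovetailing details that the paper dismisses as clear.
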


\begin{proof}
The desired structure $\mathcal{A}$ will have three components, each itself
either finite or computable. First, there will be $r$ infinite equivalence
classes. Second, there will be a finite set $\{k_1,\dots,k_m\} \subset
\omega $ and an infinite family of equivalence classes of size $k_i$ for 
$i=1,\dots,m$. Third, there will be a finite set $\{j_1,\dots,j_p\} \subset
\omega$ with corresponding natural numbers $n_i > 0$ for $i=1,\dots,p$ and 
$j_i$ equivalence classes of size $n_i$. It is clear that a computable
structure $\mathcal{A}$ can be constructed such that each desired component
is itself computable.
\end{proof}

The proof of Lemma \ref{l2a} really needed the assumption of infinitely many
infinite equivalence classes, since it is possible that either finitely many
or infinitely many infinite equivalence classes come from $B$.

If there are just finitely many infinite equivalence classes, then the
notion of $s$-functions and $s_{1}$-functions is important. These functions
were introduced by Khisamiev in \cite{khis92}.

\begin{defn}
The function $f:\omega ^{2}\rightarrow \omega $ is an $s$\emph{-function} if
the following hold:

\begin{enumerate}
\item[(i)] For every $i$ and $s$, $f(i,s)\leq f(i,s+1)$.

\item[(ii)] For every $i$, the limit $lim_{s}f(i,s)$ exists.

Let $m_{i}=_{def}lim_{s}f(i,s)$.

We say that $f$ is an \emph{$s_{1}$-function} if, in addition, $m_{i}<m_{i+1}
$ for each $i$.
\end{enumerate}
\end{defn}

\begin{lemma}
\label{l3} Let $\mathcal{A}$ be a computable equivalence structure with
finitely many infinite equivalence classes and an infinite character. Then

\begin{enumerate}
\item[(i)] There exists a computable $s$-function $f$ with
corresponding limits $m_{i}=lim_{s}f(i,s)$ such that $\langle
k,n\rangle \in \chi(\mathcal{A})$ if and only if
\begin{equation*}
card(\{i:k=m_{i}\})\geq n\text{.}
\end{equation*}

\item[(ii)] If the character is unbounded, then there is a computable $s_{1}$
-function $g$ such that $\mathcal{A}$ contains an equivalence class of size 
$m_{i}$ for all $i$, where 
\begin{equation*}
m_{i}=lim_{s}g(i,s).
\end{equation*}
\end{enumerate}
\end{lemma}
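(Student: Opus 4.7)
\smallskip\noindent\textbf{Proof plan.}
Let $a_0<a_1<\cdots$ be the computable enumeration of one representative per equivalence class of $\mathcal{A}$; this exists because $\mathcal{A}$ is computable and, as the character is infinite, $\mathcal{A}$ has infinitely many classes. Define the computable stage-by-stage approximation $h(i,s)=|\{x\le s:xE^{\mathcal{A}}a_i\}|$, which is monotone in $s$ with $\lim_s h(i,s)=|[a_i]|$. Let $r<\omega$ be the number of infinite classes and $j_1<\cdots<j_r$ be the infinite positions.

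For part (i), if $\{j_1,\ldots,j_r\}$ were computable, we could simply let $\pi:\omega\to\omega\setminus\{j_1,\ldots,j_r\}$ be the increasing bijection and take $f(i,s)=h(\pi(i),s)$, and we would be done. My plan is to approximate $\pi$ by a slot-reassignment scheme. Maintain at each stage $s$ an injection $\sigma_s:\omega\to\omega$, starting from $\sigma_0=\mathrm{id}$, and evict a position $j=\sigma_s(i)$ when its current $h$-value crosses a slowly rising threshold. When $j$ is evicted, shift the higher slots down and set $f(i,s+1)=\max\bigl(f(i,s),h(\sigma_{s+1}(i),s+1)\bigr)$, so that $f(i,\cdot)$ remains monotone in $s$. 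A priority argument on slot indices, combined with the fact that only $r$ positions are truly infinite, guarantees that each slot is reassigned only finitely often and stabilizes to some finite-class position. The main obstacle is calibrating the eviction rule so that (a) every infinite position is eventually evicted, (b) no finite position is ever evicted for good, and (c) the running maximum at slot $i$ ends up equal to the size of its final class; condition (c) is the delicate one, and can be arranged by insisting that any reassignment target a position $j'$ whose current $h$-value already dominates the previous running maximum at that slot, which is possible because the infinitely many finite classes have $h$-approximations growing monotonically toward their true sizes.

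For part (ii), I plan to thin the limits of the $s$-function from (i) into a strictly increasing sequence. Because the character is unbounded, the multiset $\{m_i\}$ of finite class sizes contains arbitrarily large values, hence infinitely many distinct ones. Build $g(n,s)$ recursively in $n$: set $g(0,s)=f(0,s)$, and given $g(n,\cdot)$ let $g(n+1,s)=\max\bigl(g(n+1,s-1),\,f(j(n,s),s)\bigr)$, where $j(n,s)$ is the least $j\le s$ with $f(j,s)>g(n,s)$, if any exists. Since the set of limits of $f$ is unbounded, there is always some $m_j>\lim_s g(n,s)$, so $j(n,s)$ eventually stabilizes at the least $j$ achieving the next larger attained value; the sticky-maximum update keeps $g(n+1,\cdot)$ monotone in $s$ with limit equal to that $m_j$. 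This yields a computable $s_1$-function whose limits are strictly increasing sizes of finite equivalence classes of $\mathcal{A}$, as required.
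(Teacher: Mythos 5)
For part (i) you have overlooked that the set $\{j_1,\dots,j_r\}$ of infinite positions is finite and hence automatically computable: the lemma only asserts the \emph{existence} of a computable $s$-function, so this finite amount of information may be hardwired non-uniformly. Your own opening remark (``if $\{j_1,\ldots,j_r\}$ were computable, we could simply let $\pi$ be the increasing bijection and take $f(i,s)=h(\pi(i),s)$'') therefore already completes the proof of (i); this is exactly the paper's first move, namely that one may assume $\mathcal{A}$ has no infinite classes because the infinite classes are captured by a finite set of representatives. The slot-reassignment scheme you build instead is not only unnecessary but, as sketched, does not work: you never say where an evicted finite-class position goes (if it disappears, its size is lost from the multiset $\{m_i\}$ and the character condition fails), and your repair for condition (c) --- reassign slot $i$ only to a position whose current $h$-value dominates the running maximum at that slot --- can be unsatisfiable, for instance when the character is bounded by some $K$ (which is allowed in (i), since ``infinite character'' does not imply ``unbounded'') and an infinite class parked at slot $i$ has already pushed the running maximum past $K$: no finite class can then ever be an admissible target for that slot.

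For part (ii) the sticky-maximum update has a concrete bug: $\lim_s g(n+1,s)=\sup_s f(j(n,s),s)$, and an early transient value can be locked in even though it is not the size of any class. For example, suppose at a stage $s_1$ before $g(n,\cdot)$ has converged we have $g(n,s_1)=3$, $f(j,s_1)\le 3$ for $j<4$, and $f(4,s_1)=1000$ while $m_4=2000$; then $j(n,s_1)=4$ and $g(n+1,s_1)\ge 1000$. If $g(n,\cdot)$ later converges to $5$ and $j(n,s)$ settles on a $j^*$ with $m_{j^*}=10$, then $g(n+1,\cdot)$ converges to $1000$, which need not equal $m_j$ for any $j$, so $\mathcal{A}$ need not contain a class of that size --- violating the conclusion of (ii). The root difficulty is that $f(j,s)$ is only a partial count of $[a_j]$, and a maximum taken over partial counts of different classes need not be a class size. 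The paper avoids this with a direct movable-markers construction: at each stage it picks a bound $p_{s+1}$ and representatives $b_0,\dots,b_{s+1}$ whose classes restricted to $\{a\le p_{s+1}\}$ already have strictly increasing cardinalities, and moves a marker only when forced, so each marker stabilizes and its limit is the exact size of the class on which it settles. Your thinning idea is probably repairable (e.g.\ by also requiring the new candidate $j$ to satisfy $f(j,s)\ge g(n+1,s-1)$ and then setting $g(n+1,s)=f(j(n,s),s)$ with no maximum), but as written the argument does not establish the statement.
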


\begin{proof}
We may assume, without loss of generality, that $\mathcal{A}$ has no
infinite equivalence classes, since the infinite classes can be captured by
a finite set of representatives.

(i) Define a computable sequence of representatives for all equivalence
classes of $\mathcal{A}$ by setting $a_{0}=0$ and setting $a_{i+1}$ to be
the least $a>a_{i}$ such that $\lnot (aEa_{j})$ for all $j\leq i$. Now
simply let 
\begin{equation*}
f(i,s)=card(\{a\leq s:aEa_{i}\}\text{.}
\end{equation*}

(ii) We will define a uniformly computable family $a_{i}^{s}$ for $i\leq s$
in such a way that $a_{i}=lim_{s}a_{i}^{s}$ converges. We will also define a
computable sequence $p_{s}$, and let 
\begin{equation*}
f(i,s)=card(\{a\leq p_{s}:aEa_{i}^{s}\})\text{.}
\end{equation*}
Hence, we will have 
\begin{equation*}
m_{i}=lim_{s}\;(card(\{a\leq s:aEa_{i}\})=card([a_{i}]))\text{.}
\end{equation*}%
At stage $0$, we have $p_{0}=0$ and $a_{0}^{0}=0$, so that $f(0,0)=1$.

After stage $s$, we have $p_{s}$ and $a_{0}^{s},\dots ,a_{s}^{s}$ such that 
\begin{equation*}
f(i,s)=card(\{a\leq p_{s}:aEa_{i}^{s}\})\text{,}
\end{equation*}
and 
\begin{equation*}
f(0,s)<f(1,s)<\dotsb <f(s,s).
\end{equation*}
At stage $s+1$, we look for the least $p$ such that there is a sequence
$b_{0},\dots ,b_{s+1}$ with the property that 
\begin{equation*}
k_{i}=card(\{a\leq p:aEb_{i}\})
\end{equation*}
and $k_{0}<k_{1}<\dots <k_{s+1}$, and with the further property that
$b_{i}=a_{i}^{s}$ whenever there is no $j\leq i$ and no $aEa_{j}^{s}$
with $p_{s}<a\leq p$. Then we let $a_{i}^{s+1}=b_{i}$ and
$p_{s+1}=p$. To see that such $p$ exists, simply let $m$ be the
largest such that $[a_{j}^{s}]=\{a\leq p:aEa_{j}^{s}\}$ for all
$j\leq m$, and let $b_{i}=a_{i}^{s}$ for all $i\leq m$. Then use the
fact that $\chi (\A)$ is unbounded to find $b_{m+1},\dots ,b_{s+1}$
with
\begin{eqnarray*}
card([a_{m}^{s}]) &<&card([b_{m+1}])<card([b_{m+2}])<\dotsb  \\
&<&card([b_{s+1}])\text{,}
\end{eqnarray*}

\noindent and take $p$ large enough so that $[b_{i}]=\{a\leq p:aEb_{i}\}$.
\end{proof}

\begin{lemma}
\label{nl1} For any computable $s_1$-function $f$, the range of $f$ is a
$\Delta^0_2$ set.
\end{lemma}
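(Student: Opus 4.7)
The plan is to interpret ``the range of $f$'' as the set $R=\{m_i : i\in\omega\}$ of the limits, since otherwise the statement is trivial (the literal image of any computable function $\omega^2\to\omega$ is c.e., hence already $\Delta^0_2$). On this reading, the $s_1$-hypothesis, and in particular the strict inequality $m_i<m_{i+1}$, should play an essential role.

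The $\Sigma^0_2$ half is immediate: monotonicity in $s$ lets me write
\[
k\in R \iff (\exists i)(\exists s)(\forall t\geq s)\,f(i,t)=k,
\]
which is manifestly $\Sigma^0_2$. For the $\Pi^0_2$ half, the key observation is that the strict monotonicity forces $m_i\geq i$ by a trivial induction, so if $k=m_i$ then necessarily $i\leq k$. This bounds the outer quantifier:
\[
k\in R \iff (\exists i\leq k)(\exists s)(\forall t\geq s)\,f(i,t)=k.
\]
Negating, $k\notin R$ iff $(\forall i\leq k)(m_i\neq k)$. For each fixed $i\leq k$ the condition $m_i\neq k$ splits as ``$m_i<k$,'' equivalently $(\forall s)\,f(i,s)<k$ (a $\Pi^0_1$ predicate), in disjunction with ``$m_i>k$,'' equivalently $(\exists s)\,f(i,s)>k$ (a $\Sigma^0_1$ predicate); this disjunction is $\Sigma^0_2$, and a bounded conjunction of $\Sigma^0_2$ predicates is $\Sigma^0_2$. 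Hence $k\notin R$ is $\Sigma^0_2$, so $R$ is $\Pi^0_2$, and combining the two upper bounds yields $R\in\Delta^0_2$.

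An equivalent and perhaps more intuitive route is through the limit lemma: a $0'$-oracle can compute $m_i$ uniformly in $i$ (by locating the stage at which the monotone sequence $f(i,\cdot)$ stabilizes), and to decide whether $k\in R$ it suffices, by strict monotonicity, to compute $m_0,\ldots,m_k$ and check whether any of them equals $k$. This is a $0'$-computable decision procedure, so $R\in\Delta^0_2$.

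The only real content is the first step of each argument: noticing that strict monotonicity is what pins the witness $i$ to the bound $k$. Without it, i.e. for an ordinary $s$-function, the outer quantifier on $i$ cannot be made bounded, and one lands only at $\Sigma^0_2$ rather than $\Delta^0_2$.
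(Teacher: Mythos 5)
Your proof is correct and follows essentially the same route as the paper: interpret the range as the set of limits $m_i$, use the strict monotonicity $m_0<m_1<\dotsb$ to bound the witness $i$ by $k$, and then observe that the bounded quantifier preserves both a $\Sigma^0_2$ and a $\Pi^0_2$ description of the condition $m_i=k$. The paper writes the $\Pi^0_2$ side directly as $(\forall s)(\exists t>s)\,f(i,t)=m$ rather than via your complement-and-split argument, but this is only a cosmetic difference.
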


\begin{proof}
Let $m_{i}=lim_{s}f(i,s)$. Since $m_{0}<m_{1}<\dots $, it follows that $m\in
ran(f)$ if and only if there exists $i<m$ such that $m=f(i)$, which has the
following two characterizations: 
\begin{equation*}
(\exists s)(\forall t>s)f(i,t)=m\text{,}
\end{equation*}
and 
\begin{equation*}
(\forall s)(\exists t>s)f(i,t)=m\text{.}
\end{equation*}
Thus, the range of $f$ is both $\Sigma _{2}^{0}$ and $\Pi _{2}^{0}$.
\end{proof}

\begin{lemma}
\label{l4} Let $K$ be a $\Sigma^0_2$ characteristic and let $r$ be finite.

\begin{enumerate}
\item[(i)] Let $f$ be a computable $s$-function with the corresponding
limits $m_{i}=lim_{s}f(i,s)$ such that $\langle n,k\rangle \in K$ if and
only if 
\begin{equation*}
card(\{i:k=m_{i}\})\geq n\text{.}
\end{equation*}
Then there is a computable equivalence structure $\mathcal{A}$ with
$\chi(\A)=K$ and with exactly $r$ infinite equivalence classes.

\item[(ii)] Let $f$ be a computable $s_{1}$-function with corresponding
limits $m_{i}=lim_{s}f(i,s)$ such that $\langle m_{i},1\rangle \in K$ for
all $i$. Then there is a computable equivalence structure $\mathcal{A}$ with 
$\chi (\mathcal{A})=K$ and exactly $r$ infinite equivalence classes.
\end{enumerate}
\end{lemma}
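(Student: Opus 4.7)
The plan is to adapt the construction of Lemma \ref{l2a}, but to replace its ``declare infinite'' mechanism (which forced infinitely many infinite classes) by a pre-designated reservoir of exactly $r$ infinite classes. For part (i), fix a computable partition $\omega = D_0 \sqcup \cdots \sqcup D_{r-1} \sqcup C$ into infinite computable pieces (taking $C = \omega$ when $r = 0$), and declare each $D_j$ to be a single equivalence class. Enumerate $C = \{c_0, c_1, \ldots\}$ and build the finite classes inside $C$ driven by $f$. At stage $s+1$, for each $i \leq s$ in turn, ensure that class $[a_i]$ has size $f(i,s+1)$ by annexing fresh elements of $C$ as needed; when $i=s$ and $f(s,s+1)\geq 1$, create a new class with a fresh representative $a_s$. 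At each stage, also attach one fresh element from each $D_j$ to its infinite class, so that the universe keeps growing and every element is committed at some explicit finite stage, making $E$ computable. Since $f$ is an $s$-function, $[a_i]$ stabilizes at the finite size $m_i$, so there are exactly $|\{i : m_i = k\}|$ finite classes of size $k$ --- matching $K$ by hypothesis --- together with the $r$ reserved infinite classes. The only edge case is $r = 0$ with $\sum_i m_i$ finite, when the universe is forced to be finite; this is handled separately by building the structure on an explicit finite segment.

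For part (ii), the same template works: the strictly increasing limits $m_0 < m_1 < \cdots$ of the $s_1$-function produce one class of each size $m_i$, and the hypothesis $\langle m_i, 1\rangle \in K$ guarantees each such size is allowed. Under the natural reading that $K$ has at most one class of each listed size, this already yields $\chi(\mathcal{A}) = K$. If instead $K$ records a multiplicity $\langle m_i, n\rangle$ with $n > 1$, for each additional copy one spawns a candidate class of size $m_i$ growing along $f(i,s)$, and uses the $\Sigma^0_2$-witness for $\langle m_i, n\rangle \in K$ as a Lemma \ref{l2a}-style trigger: as long as the witness remains consistent, keep feeding the candidate along $f$; if the witness fails, rather than declaring the candidate infinite (which would spoil the count of $r$), quietly absorb it into one of the reserved classes $D_j$.

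The hard part will be this absorption step: once a non-equivalence between an element of a candidate class and an element of $D_j$ has been committed, merging them becomes impossible. The fix is to postpone committing any such non-equivalences until the candidate has been confirmed finite of its intended size --- a delicate but standard piece of bookkeeping in the finite-injury spirit, manageable because the potential sources of injury (the $\Sigma^0_2$-witnesses for different $\langle m_i, n\rangle$) act independently. Once that is in place, verification that $\chi(\mathcal{A}) = K$ and that there are exactly $r$ infinite equivalence classes is immediate from the properties of $f$ and the $\Sigma^0_2$-representation of $K$.
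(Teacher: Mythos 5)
Your part (i) is essentially the paper's construction (the paper reduces to $r=0$ and uses the odd numbers as a pool of fresh elements; your reserved sets $D_j$ play the same role), so that half is fine. Part (ii), however, has a genuine gap, in two respects. First, the hypothesis is only that each $m_i$ occurs in $K$, not that every size occurring in $K$ is some $m_i$: here $K$ is an arbitrary $\Sigma^0_2$ character, and it may contain pairs $\langle k,n\rangle$ with $k\notin\{m_0,m_1,\dots\}$. Realizing those forces you into the Lemma \ref{l2a}-style $\Sigma^0_2$-guessing for \emph{all} of $K$, not just for multiplicities $n>1$ attached to the sizes $m_i$. Second, and more seriously, your mechanism for handling a failed guess does not work. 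Absorbing a failed bloc into a reserved infinite class $D_j$ is unavailable when $r=0$ (the case actually needed later, e.g.\ in Theorem \ref{isom2}(a), where the structure must have \emph{no} infinite classes), and in any case it requires merging two classes that have already been separated. Your proposed fix --- postpone committing the non-equivalences until the candidate ``has been confirmed finite of its intended size'' --- can never be carried out: correctness of a $\Sigma^0_2$ guess is the $\Pi^0_1$ event $(\forall z)R(k,n,w,z)$, which is never confirmed at any finite stage, while computability of $E$ on universe $\omega$ forces you to decide every pair permanently at some finite stage.

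The paper's resolution is exactly the idea your sketch is missing: a failed bloc of size $k_i$ is never merged with anything already built; instead it is set aside and then \emph{grown}, by adjoining fresh elements, into a finite class of size $f(j,s)\geq k_i$ for some fresh marker $j$, after which it tracks $f(j,\cdot)$ and stabilizes at the legitimate size $m_j$ (legitimate precisely because $\langle m_j,1\rangle\in K$). This keeps the number of infinite classes at exactly $r$ and keeps $E$ computable. It creates a new problem that you also do not address: the marker class of eventual size $m_j$ may duplicate a class being built from the guess for $\langle m_j,1\rangle$, overshooting the multiplicity prescribed by $K$. The paper handles this with a displacement/revival bookkeeping: classes built for $\langle k,1\rangle$ are displaced while some marker class sits at size $k=f(j,s)$, and are revived with a fresh representative if $f(j,\cdot)$ later moves past $k$. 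Without the growth-instead-of-merge mechanism and this double-counting correction, the construction in your part (ii) does not go through.
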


\begin{proof}
Clearly, it suffices to prove the statements for $r=0$. We may assume that
$f(i,0)\geq 1$ for all $i$.

(i) Let $a_{i}=2i$. We will build an equivalence structure $\mathcal{A}$
with equivalence classes $\left[ a_{i}\right] $ of $m_{i}$. At stage $0$,
make the elements $1,3,\ldots ,2(f\left( 0,0\right) -1)$ equivalent to $a_{0}
$. After stage $s$, we have exactly $f\left( m,s\right) $ elements
equivalent to $a_{m}$ for each $m\leq s$. Then we add $f(m,s+1)-f(m,s)$
elements to $\left[ a_{m}\right] $ for $m\leq s$ and put $f(s+1,s+1)-1$
elements into the class of $a_{s+1}$.

(ii) Since there is an $s_1$-function, the character $K$ must be unbounded.
We modify the argument for Lemma \ref{l2a} as follows. The pool of elements
to put into the equivalence classes is now simply $\omega \setminus B$.

Here is the first modification. When we find $\neg R(k_i,n_i,w_i,z)$ for
some $z$ at stage $s+1$, we can no longer create an infinite equivalence
class, but we have already put $k_i$ elements in the equivalence class of 
$b_i$. So we will set this bloc $[b_i]$ aside until we find a number $j$ and
a stage $s$ such that $k_i \leq f(j,s)$. Since there is an increasing
sequence $m_0 < m_1 < \dotsb$ corresponding to the $s_1$-function, such $j$
and $s$ will eventually be found. Then we will assign a marker $j$ to $b_i$
and add $f(j,s) - k_i$ elements to the bloc to create an equivalence class
with $f(j,s)$ elements. If at a later stage $t$ we have $f(j,t) > f(j,s)$,
then we will add $f(j,t)-f(j,s)$ more elements to the class. Since $lim_s
f(j,s) = m_j$ converges, we will eventually have an equivalence class of
size $m_j$.

This means that we may have created an extra equivalence class with $f(j,s)$
elements, so the second modification is that when we create a class with 
$k=f(j,s)$ elements, we may need (perhaps temporarily) to remove from our
construction any class corresponding to $\langle k,1,w,z\rangle $. That is,
we set these (finitely many) blocs aside to be put into a larger class, just
as if we had found that $\lnot R(k,n,w,z)$, but we make a note that they may
need to be revived later. If at some later stage $t$, we find $k^{\prime }$
such that 
\begin{equation*}
k^{\prime }=f(j,t)>f(j,s)=k\text{,}
\end{equation*}
so that we will increase the size of the class with marker $j$, then we are
going to remove the classes corresponding to $\langle k^{\prime },1\rangle $
and at the same time revive the classes corresponding to $\langle k,1\rangle 
$. At this stage, we remove the attachment to $f(j,s)$ of the bloc and check
to see for all $b_{i}=\langle k,1,w,z\rangle $:

\begin{tabular}{rl}
(1) & Whether $R(k,1,w,z)$ still holds for all $z\leq t$; \\ 
(2) & Whether the bloc corresponding to $b_{i}$ has been put into a \\ 
& larger class yet.
\end{tabular}

\noindent If (1) is false and (2) is true, then there is nothing else to do.
If (1) and (2) are both false, then we keep the bloc aside for later use. If
(1) is true and (2) is false, then we revive this bloc. If both (1) and (2)
are true, then we create a new class with $k$ elements for each $\langle
k,1,w,z \rangle$ such that $R(k,1,w,z)$ still holds for all $z \leq t$.

We will now describe the construction in detail. Set $C_{-1}=\omega -B$. No 
$j$-markers are used at stage $-1$. If $b_{i}=\langle
k_{i},n_{i},w_{i},z_{i}\rangle $ as in Lemma \ref{l2a}, let us say that 
$b_{i}$ is \emph{active} at stage $s$ if for all $z\leq s$, $
R(k_{i},n_{i},w_{i},z)$, and otherwise we say that $b_{i}$ is \emph{inactive}.

After $s$ stages, we will have some equivalence classes with active
representatives $b_{i}=\langle k,n,w,z\rangle $ or revived representatives 
$b_{i}^{\prime }$, containing $k$ elements, such that for all $z\leq s$, 
$R(k,n,w,z)$. We will have some equivalence classes containing $f(j,s)$
elements corresponding to the $s_{1}$-function $f$. There will also be
certain blocs of size $k_{i}$ corresponding to inactive $b_{i}$ and certain
displaced blocs corresponding to active $b_{i}$, which are waiting to be put
into a larger equivalence class. Finally, there are some active 
$b_{i}=\langle k_{1},1,w,z\rangle $ that have been displaced by some
equivalence class of size $f(j,s)=k_{i}$ but may need to be revived. There
is a pool $C_{s}$ of remaining elements that may be used to fill out new
equivalence classes. At stage $s+1$, the following things may happen.

First, we check to see whether $b_{s+1}=\langle k,n,w,z\rangle $ is active
at stage $s+1$. If so, then we check to see whether $n=1$ and $k=f(j,s+1)$
for some current equivalence class with marker $j$. If such $j$ exists, then
we put $b_{s+1}$ into the pool $C_{s+1}$. Otherwise, we create an
equivalence class with $k$ elements consisting of $b_{s+1}$ and $k-1$
elements from the pool $C_{s}$. If $b_{s+1}$ is already inactive, then we
simply add it to the pool $C_{s+1}$.

Second, we check for $i\leq s$ whether some $b_{i}$ that was active at stage 
$s$ becomes inactive at stage $s+1$. If such $b_{i}$ was representing an
equivalence class at stage $s$, then that class is set aside as a bloc to be
attached to some $f(j,t)$ at a later stage.

Third, we look for the smallest bloc that has been set aside for some
inactive $b_{i}$ at stage $s$, and check whether there exists a previously
unused $j\leq s+1$ such that $k_{i}\leq f(j,s+1)$. If so, then we create an
equivalence class including this bloc and containing $f(j,s+1)$ elements.

Fourth, for all markers $j$ that are being used at stage $s$, we check to
see whether $f(j,s+1)>f(j,s)$. If so, then we add $f(j,s+1)-f(j,s)$ elements
to the corresponding equivalence class. We then displace any classes with
representatives $b_{i}$ for $i\leq s$ such that $k_{i}=f(j,s+1)$ and $n_{i}=1
$. That is, we set aside this class as a bloc to be attached later to some 
$f(j^{\prime },t)$. Finally, we revive any active $b_{i}$ such that 
$k_{i}=f(j,s)$ and $n_{i}=1$, which were displaced by $f(j,s)$. This means
that we create a completely new class with $k_{i}$ elements and a new
representative $b_{i}^{\prime }$ taken from the pool.

It is clear that eventually all elements from the pool are put into some
equivalence class. It needs to be verified that this class eventually
stabilizes at some finite size $k$ and that the resulting equivalence
structure has the desired character $\chi(\mathcal{A})$.

Suppose that $a$ is first put into some class attached to marker $j$ at
stage $s$. Then this class will have size $f(j,t)$ at any later stage $t$
and will stabilize with $m_{j}$ elements. Next, suppose that $a$ is first
put into some bloc with representative $b_{i}$ or $b_{i}^{\prime }$. There
are two cases. If $b_{i}$ remains active at all stages and is never
displaced by any $f(j,t)$, then this class has exactly $k_{i}$ elements at
all future stages. Otherwise, this class is set aside as a bloc at some
later stage, and then eventually put into a class with some marker $j$,
which will stabilize with $m_{j}$ elements. This is guaranteed by the fact
that there are infinitely many $m_{j}>k_{i}$ and eventually the bloc
containing $b_{i}$ will have the highest priority. Thus, all equivalence
classes stabilize at some finite size. Hence, $\mathcal{A}$ has no infinite
equivalence classes.

Now, fix a finite $k$ and suppose that $\langle k,n\rangle \in K$ for all 
$n<r$, where $r\leq \infty $. We need to verify that there are exactly $r$
classes of size $k$ in $\mathcal{A}$. For each $n<r$, there will be a unique
representative $b_{i}=\langle n,k,w,z\rangle $ that remains active at all
stages, where $w$ is the least such that $(\forall z)R(n,k,w,z)$. For $n>1$,
this $b_{i}$ will represent a class of size $k$ and can never be displaced.
For $n=1$, there are two possibilities. There can be some (unique) marker $j$
such that $m_{j}=k$, which corresponds to a class stabilizing at size $k$.
In this case, any classes corresponding to $b_{i}$ (or any later 
$b_{i}^{\prime }$) will be displaced and eventually not revived (once $f(j,s)$
converges to $m_{j}$). On the other hand, if there is no such marker $j$,
then eventually there will be a unique class with representative $b_{i}$ (or
some $b_{i}^{\prime })$ with $k$ elements, which is never displaced. Classes
represented by other $b_{p}$ or by other markers can never have size $k$.
Thus $\chi (\mathcal{A})=K$.
\end{proof}

\bigskip 

The necessity of the $s_{1}$ function is seen by the following.

\begin{theorem}
\label{t3} There is an infinite $\Delta _{2}^{0}$ set $D$ such that for any
computable equivalence structure $\mathcal{A}${\ }with unbounded
character $K$ and no infinite equivalence classes, $\{k:\langle
k,1\rangle \in K\}$ is not a subset of $D$. Hence, for any $s_{1}$
function $f$ with $m_{0}<m_{1}<\dotsb$, where $m_{n}=lim_{s}f(n,s)$,
there exists $i$ such that $m_{i}\notin D$.
\end{theorem}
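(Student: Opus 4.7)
The plan is to construct $D$ by a $\emptyset'$-oracle diagonalization against all computable equivalence structures. Fix an effective enumeration $\{\mathcal{A}_e\}_{e\in\omega}$ of computable binary structures on $\omega$, and for each $e$ set $S_e=\{k:\langle k,1\rangle\in\chi(\mathcal{A}_e)\}$. By Lemma~\ref{l1}(c) the set $S_e$ is uniformly $\Sigma_2^0$ in $e$, so $\emptyset'$ can effectively enumerate it. Note that whenever $\mathcal{A}_e$ is a computable equivalence structure with unbounded character and no infinite equivalence classes, $S_e$ is unbounded. The requirement $R_e$ to be met is: if $\mathcal{A}_e$ satisfies the hypothesis of the theorem, then $S_e\not\subseteq D$.

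With $\emptyset'$ as oracle, dovetail the enumerations of the $S_e$: whenever a new element $k$ is enumerated into some $S_e$, if $k_e$ has not yet been defined, $k>2e$, and $k$ exceeds every previously assigned witness, then set $k_e:=k$. Let $W:=\{k_e:e\in\omega,\ k_e\text{ defined}\}$ and $D:=\omega\setminus W$.

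The key observation for the verification is that the constraint $k_e>2e$ turns membership in $W$ into a bounded-quantifier question: $n\in W$ iff $\exists e<n/2,\ k_e=n$. Since the construction is $\emptyset'$-computable and a witness, once assigned, is permanent, the predicate ``$k_e=n$'' is $\Sigma_1^{\emptyset'}$ and its negation is $\Pi_1^{\emptyset'}$; bounded quantification preserves both classes. Hence $W$ is simultaneously $\Sigma_2^0$ and $\Pi_2^0$, so both $W$ and $D$ lie in $\Delta_2^0$. The same bound also gives $|W\cap[0,2n]|\leq n$, so $D$ is infinite. Finally, if $\mathcal{A}_e$ satisfies the hypothesis, $S_e$ is unbounded, so some $k\in S_e$ with $k>2e$ and larger than all currently assigned witnesses eventually appears in the enumeration and is chosen as $k_e$; then $k_e\in S_e\setminus D$, as required.

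For the ``Hence'' clause, given an $s_1$-function $f$ with $\lim_s f(i,s)=m_i$ and $m_0<m_1<\dotsb$, set $K:=\{\langle m_i,1\rangle:i\in\omega\}$. By Lemma~\ref{nl1}, $\{m_i\}$ is $\Delta_2^0$, so $K$ is $\Sigma_2^0$; Lemma~\ref{l4}(ii) with $r=0$ then yields a computable equivalence structure $\mathcal{A}$ with $\chi(\mathcal{A})=K$ and no infinite class. Its character is unbounded, so by the first part of the theorem $\{m_i\}=\{k:\langle k,1\rangle\in K\}\not\subseteq D$, giving some $m_i\notin D$. The main obstacle is showing that $D\in\Delta_2^0$ rather than merely $\Sigma_2^0$ (which a naive $\emptyset'$-enumeration of witnesses would yield); the growth condition $k_e>2e$ is precisely what reduces membership in $W$ to a bounded search and thereby keeps us inside $\Delta_2^0$.
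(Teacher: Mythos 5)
Your overall strategy --- a Post-style diagonalization with witnesses forced above $2e$ so that $D$ meets $\{0,\dots,2n\}$ in at least $n+1$ points --- is the same idea as the paper's, but you run it one quantifier level too high, and this creates two genuine gaps. First, the complexity claim. From ``$k_e=n$ is $\Sigma_1^{\emptyset'}$ and its negation is $\Pi_1^{\emptyset'}$'' you conclude that $W$ is both $\Sigma_2^0$ and $\Pi_2^0$; this is a non sequitur. Every $\Sigma_1^{\emptyset'}$ predicate has a $\Pi_1^{\emptyset'}$ negation, so your bounded quantifier yields only $W\in\Sigma_1^{\emptyset'}=\Sigma_2^0$, i.e.\ $D\in\Pi_2^0$. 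To get $D\in\Delta_2^0$ you would need ``$k_e=n$'' (or at least ``$n\in W$'') to be $\emptyset'$-decidable, and deciding it requires knowing whether $k_e$ is \emph{ever} assigned --- a $\Sigma_1^{\emptyset'}$ question you have not shown to be $\emptyset'$-decidable. (This particular gap is repairable: since each new witness exceeds all earlier ones, the witnesses in order of assignment form a strictly increasing $\emptyset'$-computable sequence, and once you check that infinitely many are assigned, $n\in W$ is decided by running the construction until some witness exceeds $n$. But that argument is not the one you gave.) Second, and more seriously, the clause ``$k$ exceeds every previously assigned witness'' --- which is exactly what you need for the repair above --- breaks the satisfaction of $R_e$. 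With no priority ordering, other requirements can keep raising the bar faster than $S_e$ enumerates large elements: if each element of $S_e$ is already dominated by some witness assigned at an earlier step, $k_e$ is never defined even though $S_e$ is unbounded, and nothing in your construction rules this out. Dropping the clause restores $R_e$ but leaves you with $D$ only $\Pi_2^0$; the two fixes pull in opposite directions.

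The paper avoids both problems by diagonalizing one level lower. Instead of enumerating the $\Sigma_2^0$ sets $S_e$ with a $\emptyset'$ oracle, it uses the c.e.\ relation $R(e,a)\iff card([a]_e)>2e$ and a \emph{partial computable} selector $f$ with $(\exists a)R(e,a)\Rightarrow R(e,f(e))$; the witness is an element $f(e)$ of the structure rather than a class size fished out of a $\Sigma_2^0$ enumeration. Then $k\in D\iff(\forall e<k/2)(card([f(e)]_e)\neq k)$ is $\Delta_2^0$ outright because $f(e)\!\downarrow$ is $\Sigma_1^0$ and $card([a]_e)=k$ is $\Delta_2^0$ by Lemma~\ref{l1}(a), and there is no dynamic competition between requirements at all. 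Your treatment of the ``Hence'' clause (via Lemmas~\ref{nl1} and~\ref{l4}(ii)) is fine and matches the paper. I would recommend reworking the main construction along the paper's lines, or else supplying a priority argument that simultaneously guarantees every relevant $R_e$ a witness and keeps $W$ $\emptyset'$-decidable.
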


\begin{proof}
We use a method similar to Post's construction of a simple set. Let 
$\A_{e}$ be the structure with universe $\omega $ and relation 
$E_{e}$ defined by 
\begin{equation*}
iE_{e}j\iff \langle i,j\rangle \in W_{e}
\end{equation*}
and let $[a]_{e}=\{j:aE_{e}j\}$. Then every computable equivalence structure
is $\mathcal{A}_{e}$ for some $e$, and $[a]_{e}$ will be the equivalence
class of $a$. Define a c.e. relation $R$ by 
\begin{equation*}
R(e,a)\iff card([a]_{e})>2e.
\end{equation*}
Then by the standard uniformization theorem for c.e.\ relations (see Soare 
\cite{Soa87}, p.\ 29), there exists a partial computable function $f$,
called a \emph{selector for }$R$, such that, for every $e$, 
\begin{equation*}
(\exists a)R(e,a)\Rightarrow R(e,f(e))\text{.}
\end{equation*}
Define $D$ as follows. 
\begin{equation*}
k\in D\iff (\forall e<\frac{k}{2})(card([f(e)]_{e})\neq k)\text{.}
\end{equation*}
Then $D$ is a $\Delta _{2}^{0}$ set by part (\textit{a)} of Lemma \ref{l1}.
For any $\ell $, the set 
\begin{equation*}
\hat{D}=\{n|(\exists x<\ell )(n=card([f(x)]_{x})\}
\end{equation*}
has cardinality at most $\ell $, so that at most $\ell $ of the elements
from the set $\{0,1,\dots ,2\ell \}$ may be in $\hat{D}$. Thus, the
complement of $D$ contains at most $e$ elements from $\{0,1,\dots ,2e\}$.
Hence, the complement of $D$ is infinite. Now suppose that $\mathcal{A}$ has
unbounded character and has no infinite equivalence classes. Choose $e$ so
that ${\mathcal{A}}={\mathcal{A}}_{e}$. Since $\chi (\mathcal{A})$ is
unbounded, there exists $a$ such that $R(e,a)$, so that $a=f(e)$. Since 
$\mathcal{A}$ has no infinite equivalence classes, 
\begin{equation*}
card([a]^{\mathcal{A}})=card([f(e)]_{e})=k>2e
\end{equation*}
is finite. Then by definition, $\langle k,1\rangle \in \chi (\mathcal{A})$
but $k\notin D$.

Now let $f$ be any $s_{1}$-function, let $m_{i}=lim_{s}f(i,s)$, and
let $K=\{\langle m_{i},1\rangle :i\in \omega \}$. Then there is an
equivalence structure $\mathcal{A}$ with character $K$. Therefore,
some $m_{i}\notin D$.
\end{proof}

\subsection{Computable categoricity of equivalence structures}

We first investigate relative computable categoricity of computable
equivalence structures by showing that they have a formally c.e. Scott
family. 

\begin{proposition}
\label{p1} If $\mathcal{A}$ is a computable equivalence structure with only
finitely many finite equivalence classes, then $\mathcal{A}$ is relatively
computably categorical.
\end{proposition}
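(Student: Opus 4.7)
The plan is to construct a formally c.e.\ Scott family for $\mathcal{A}$; relative computable categoricity then follows immediately from the Ash--Knight--Manasse--Slaman and Chisholm characterization recalled in the introduction (and, moreover, $\Delta_1^0$-categoricity follows because every relatively computably categorical structure is computably categorical).

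Fix representatives $c_1,\ldots,c_m$ of the (finitely many) finite equivalence classes of $\mathcal{A}$, and let $M$ be the maximum of $|[c_i]^{\mathcal{A}}|$ over $i\leq m$ (taking $M=0$ if there are no finite classes). Setting $K=M+1$, the critical observation is that under the hypothesis a class $[a]^{\mathcal{A}}$ is infinite if and only if it contains at least $K$ elements, an existential condition on $a$. For each $n\geq 1$ and each \emph{pattern} $P$---that is, a partition of $\{1,\ldots,n\}$ together with, for each block, either the assignment of some $c_j$ or the marker ``large''---I would write down a finitary existential formula $\psi_P(x_1,\ldots,x_n;c_1,\ldots,c_m)$ expressing the prescribed $E$-equalities and $E$-inequalities among the $x_i$'s and between the $x_i$'s and the parameters, together with, for each ``large'' block, the existential assertion that there exist $K$ pairwise distinct elements all $E$-equivalent to some representative $x_i$ of that block. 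The collection $\Phi$ of all such $\psi_P$ is uniformly c.e.\ and consists of finitary existential formulas with the fixed parameters $c_1,\ldots,c_m$, so it is formally c.e.

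To verify that $\Phi$ is a Scott family: every tuple $\vec a$ satisfies $\psi_P$ for $P$ its actual pattern, with ``large'' attached exactly to those blocks whose class meets none of $c_1,\ldots,c_m$ (and which must therefore be infinite, hence contain at least $K$ elements). Conversely, if $\vec a$ and $\vec b$ both satisfy the same $\psi_P$, then on each side the parameter-blocks correspond to the specific finite classes $[c_j]^{\mathcal{A}}$, and the ``large'' blocks correspond to classes of size $\geq K>M$, which must therefore be infinite. An automorphism of $\mathcal{A}$ that fixes each $c_j$---and hence each finite class---pointwise and permutes the infinite classes so as to match, for each ``large'' block, the $\vec a$-class with the $\vec b$-class (extending on each such pair of infinite classes to an arbitrary bijection) carries $\vec a$ to $\vec b$. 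The one genuine step, essentially the only obstacle, is the observation that the finiteness hypothesis converts ``$[a]$ is infinite'' from a $\Pi_2^0$ property into an existential one; the rest is straightforward enumeration and book-keeping.
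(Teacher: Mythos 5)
Your proof is correct and follows essentially the same route as the paper's: fix representatives of the finitely many finite equivalence classes as parameters and take as a formally c.e.\ Scott family the quantifier-free equivalence patterns of tuples over those parameters, using the fact that a class meeting no parameter must be infinite. The only difference is your extra existential clause asserting $K$ pairwise distinct equivalent elements for the ``large'' blocks, which is redundant (non-equivalence to every $c_j$ already forces infiniteness) but harmless.
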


\begin{proof}
Choose parameters $c_{1},\dots ,c_{n}$ which are representatives of
the $n$ finite equivalence classes. A Scott formula for any finite
sequence $\overrightarrow{a}=a_{1},\dots ,a_{m}$ of elements from $A$
is a conjunction of two formulas. The first formula $\phi
(\overrightarrow{x})$ is the conjunction of all formulas
$x_i E x_j$ (when $a_i E^{\mathcal{A}} a_j$) and $\lnot
(x_{i}Ex_{j})$ (when it is not the case that
$a_{i}E^{\mathcal{A}}a_{j}$). The second formula $\psi
(\overrightarrow{x},{\overrightarrow{c}})$ is the conjunction of all
formulas $x_{i}Ec_{j}$ (when $a_{i}E^{\mathcal{A}}c_{j}$) and $\lnot
(x_{i}Ec_{j})$ (when it is not the case that
$a_{i}E^{\mathcal{A}}c_{j}$). It is clear that every tuple of elements
from $A$ satisfies one of these formulas.

Suppose that $\overrightarrow a$ and $\overrightarrow b$ satisfy the same
Scott formula. Then, in particular, we have $a_i E^{\mathcal{A}} a_j \iff
b_i E^{\mathcal{A}} b_j$.

For any tuple ${\overrightarrow d}$, the equivalence class $[d_i]$ is finite
if and only if some $x_i E^{\mathcal{A}} c_j$ occurs in the Scott formula of 
$\overrightarrow d$, and $[d_i]$ is infinite otherwise. We will define an
automorphism $H$ of $\mathcal{A}$ mapping $\overrightarrow a$ to 
$\overrightarrow b$.

For any equivalence class $[a]$ containing none of the elements of
$\overrightarrow a$, of $\overrightarrow b$, or of $\overrightarrow c$, the
function $H$ will simply be the identity map. We also define $H(a_i) = b_i$
and $H(c_i) = c_i$. This induces a partial one-to-one map from the
equivalence classes of $\mathcal{A}$ into the equivalence classes of 
$\mathcal{A}$, which fixes finite classes setwise and takes infinite classes
to infinite classes. Within a particular finite class $[a_i]$ of size $n$,
the partial map from $[a_i]$ to $[b_i]$ defined on $[a_i] \cap
\{\overrightarrow a\}$ can be extended to an isomorphism of $[a_i]$ onto $[b_i]$.

For the infinite classes (whether finitely or infinitely many) the partial
isomorphism of the classes may similarly be extended to a total isomorphism
of the classes. Likewise, the partial map taking $a_i$ to $b_i$ may be
extended to map the infinite class $[a_i]$ to the infinite class $[b_i]$.
\end{proof}

\begin{proposition}
\label{p2} Let $\mathcal{A}$ be a computable equivalence structure with
finitely many infinite classes, with bounded character and with at most one
finite $k$ such that there are infinitely many equivalence classes of size 
$k $. Then $\mathcal{A}$ is relatively computably categorical.
\end{proposition}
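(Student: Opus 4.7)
The plan is to exhibit a formally c.e.\ Scott family for $\mathcal{A}$; relative computable categoricity then follows from the Ash--Knight--Manasse--Slaman and Chisholm characterization stated in the introduction. The three hypotheses combine to guarantee that, after adding finitely many parameters to cover every ``atypical'' class, the only unrepresented classes form a single isomorphism type of size $k_0$ (if any), and such classes are freely interchangeable.

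First I would choose parameters $c_1, \dots, c_n$ by taking one representative from each of the finitely many infinite classes together with one representative from each finite class whose size $k$ admits only finitely many classes of that size. Bounded character restricts the relevant $k$'s to a finite set, and the hypothesis that at most one size has infinitely many classes means that all but at most one of those sizes is itself covered by finitely many representatives, so $\{c_1, \dots, c_n\}$ is finite. The only classes unrepresented by some $c_j$ are then the (possibly zero, possibly infinitely many) classes of the exceptional size $k_0$.

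Next, for a tuple $\overrightarrow{a} = (a_1, \dots, a_m)$ I would take the Scott formula $\phi_{\overrightarrow{a}}(\overrightarrow{x}, \overrightarrow{c})$ to be the quantifier-free conjunction of all literals of the form $x_i E x_j$, $\lnot(x_i E x_j)$, $x_i E c_j$, and $\lnot(x_i E c_j)$ that actually hold for $\overrightarrow{a}$, $\overrightarrow{c}$ in $\mathcal{A}$. The collection $\Phi$ of all such formulas is a c.e.\ set of finitary existential (in fact quantifier-free) formulas with parameters from $\{c_1, \dots, c_n\}$, and condition (i) of a Scott family is immediate.

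The main work is condition (ii): given $\overrightarrow{a}$ and $\overrightarrow{b}$ satisfying the same $\phi \in \Phi$, construct an automorphism $H$ of $\mathcal{A}$ sending $a_i$ to $b_i$ and fixing each $c_j$. Exactly as in Proposition \ref{p1}, for each class $[c_j]$ the partial map $a_i \mapsto b_i$ restricted to $[c_j]$ extends to a bijection of $[c_j]$ onto itself. The new case is a class $C$ that contains some $a_i$ but no $c_j$: here the agreement of $\phi$-types forces the corresponding $b_i$'s to lie in a single class $C'$ that is also disjoint from every $c_j$, so both $C$ and $C'$ have size $k_0$ and the partial map extends to a bijection $C \to C'$. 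The remaining $k_0$-classes (meeting neither $\overrightarrow{a}$ nor $\overrightarrow{b}$, or meeting only one of them) can then be paired off since there are infinitely many available on each side. The one point that needs real care is verifying that $C'$ exists and has size exactly $k_0$ --- and this is precisely the content of the three hypotheses; once it is in place, the rest is routine.
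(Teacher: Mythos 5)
Your proposal is correct and follows essentially the same route as the paper: the same choice of finitely many parameters (representatives of the infinite classes and of the finite classes of non-exceptional size), the same quantifier-free Scott formulas built from $E$-literals over the tuple and the parameters, and the same automorphism-extension argument in which the only new case is a class of the exceptional size $k_0$ meeting the tuple but no parameter. The point you flag as needing care --- that such a class and its image under the type-preserving correspondence both have size exactly $k_0$ --- is exactly the observation the paper makes before reducing to the extension argument of Proposition \ref{p1}.
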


\begin{proof}
Let $c_{1},\dots ,c_{n}$ be representatives for the finite classes not
of size $k$ and let $d_{1},\dots ,d_{p}$ be representatives for the
finitely many infinite classes. Then the Scott formula for a finite
sequence $\overrightarrow{a}$ from $\mathcal{A}$ is the conjunction of
three formulas, the first two as in the proof of Proposition \ref{p1},
and the third is the conjunction of all formulas $x_{i}Ed_{j}$ (when
$a_{i}E^{\mathcal{A}}d_{j}$) and $\lnot (x_{i}Ed_{j})$ (when it is not
the case that $a_{i}E^{\mathcal{A}}d_{j}$). Then $[a_{i}]$ is infinite
if and only if $a_{i}E^{\mathcal{A} }d_{j}$ for some $j$, and
$card([a_{i}])=k$ if and only if $\lnot (a_{i}E^{ \mathcal{A}}d_{j})$
for all $j$, and also $\lnot (a_{i}E^{\mathcal{A}}c_{j})$ for all $j$.

Suppose that $\overrightarrow a$ and $\overrightarrow b$ satisfy the same
Scott formula. Then we can define an automorphism of $\mathcal{A}$ extending
the partial function which takes $a_i$ to $b_i$ as in the proof of
Proposition \ref{p1}.
\end{proof}

\begin{corollary}
Let $\mathcal{A}$ be a computable equivalence structure of one of the
following types:

\begin{enumerate}
\item $\mathcal{A}$ has only finitely many finite equivalence classes;

\item $\mathcal{A}$ has finitely many infinite classes, has bounded
character (i.e.\ only finitely many finite \emph{sizes} of equivalence
classes), and has at most one finite $k$ such that there are infinitely many
classes of size $k$.
\end{enumerate}

Then $\mathcal{A}$ is relatively computably categorical.
\end{corollary}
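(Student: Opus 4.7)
The statement to be proved is essentially a direct packaging of the two propositions that immediately precede it, so my plan is to present it as an immediate corollary rather than to redo any construction.

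First, I would observe that case (1) is precisely the hypothesis of Proposition \ref{p1}: a computable equivalence structure with only finitely many finite equivalence classes. Applying that proposition produces a formally c.e.\ Scott family (built using representatives of the finitely many finite classes as parameters, together with the conjunctive formulas described there), which is the witness to relative computable categoricity. So case (1) needs no new argument.

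Next, for case (2) I would verify that the three listed conditions exactly match the hypotheses of Proposition \ref{p2}: finitely many infinite classes, bounded character (equivalently, only finitely many distinct finite sizes of equivalence classes), and at most one finite $k$ with infinitely many classes of size $k$. Under these conditions, Proposition \ref{p2} supplies a Scott family using parameters $c_1,\dots,c_n$ for the finite classes of size not equal to $k$ and $d_1,\dots,d_p$ for the infinite classes, and the associated Scott formulas for any finite tuple are finitary existential (c.e.) formulas over these parameters. Again, this directly yields relative computable categoricity.

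Since both cases are covered, the disjunction of the two hypotheses suffices: in either case $\mathcal{A}$ possesses a formally c.e.\ Scott family, and by the Ash--Knight--Manasse--Slaman/Chisholm theorem cited in the introduction (or by the explicit automorphism argument given in Propositions \ref{p1} and \ref{p2}), $\mathcal{A}$ is relatively computably categorical. There is no real obstacle here; the only thing to be careful about is the bookkeeping that ensures the hypotheses of case (2) line up with those of Proposition \ref{p2} word-for-word, in particular that the phrase ``bounded character'' is used in the same sense (boundedness of finite-class sizes) in both places, which it is by the definition given just after Lemma \ref{l1}.
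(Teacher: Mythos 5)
Your proof is correct and matches the paper's (implicit) argument exactly: the corollary is stated in the paper without a separate proof precisely because its two cases are the hypotheses of Propositions \ref{p1} and \ref{p2} respectively, which is exactly how you dispatch them.
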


For structures $\A$ with $Fin^{\A}$ computable, there is a stronger
result.

\begin{proposition} \label{np1} 
Let $\A$ and $\B$ be isomorphic computable equivalence structures
  such that $Fin^{\A}$ and $Fin^{\B}$ are computable and such that
  $\A$ has infinitely many equivalence classes of size $k$ for at most
  one finite $k$. Then $\A$ and $\B$ are computably isomorphic.
\end{proposition}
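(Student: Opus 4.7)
My plan is to build a computable isomorphism $f : \A \to \B$ by a stage-by-stage back-and-forth that uses the computability of $Fin^{\A}$ and $Fin^{\B}$ to split the construction into an infinite-class part and a finite-class part. For the infinite part, I computably enumerate representatives of the infinite equivalence classes on each side by iteratively taking the least element of $Inf^{\A}$ (respectively $Inf^{\B}$) that is not $E$-equivalent to any prior representative; since $\A \cong \B$, the number $r \leq \omega$ of infinite classes agrees on both sides. I pair these representatives in enumeration order and, within each matched pair, perform the trivial back-and-forth between two copies of $\omega$.

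The finite-class portion is the main work. I computably enumerate representatives $a_1, a_2, \ldots$ of the finite classes of $\A$ and $b_1, b_2, \ldots$ of $\B$, and for each representative I track the apparent size $\sigma_i(s) = |\{x \leq s : x E^{\A} a_i\}|$ and analogously $\tau_j(s)$; these are nondecreasing and converge in the limit to the true sizes of $[a_i]$ and $[b_j]$. By the hypothesis on $\A$, the finite classes of $\A$ split into at most one family of size-$k_0$ classes (infinitely many, if $k_0$ exists) and, for every other $k$, a finite family of size-$k$ classes whose count agrees on the $\B$-side by isomorphism. My plan is a priority back-and-forth that commits a pairing $[a_i] \leftrightarrow [b_j]$ only when the apparent sizes agree and a priority condition is satisfied; after commitment, extending $f$ to new elements of $[a_i]$ and $[b_j]$ is routine because the true sizes are equal, so each newly processed element in $[a_i]$ can be sent to the least unused element of $[b_j]$ (and symmetrically on the other side).

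The principal obstacle is that the size function on $Fin^{\A}$ is only $\Delta^0_2$, so a commitment based on equal apparent sizes can later prove wrong if one class keeps growing; since commitments in a computable isomorphism cannot be undone, the priority must be arranged so that commitments are never premature. I would exploit the hypothesis to contain the difficulty: for each $k \neq k_0$ there are only finitely many size-$k$ classes on each side, so by processing sizes in order and waiting enough stages before committing one can ensure that all size-$k$ candidates have been identified before pairing them; any class eventually revealed to be larger than its current apparent size is simply postponed and later matched with its correct partner, with the infinite $k_0$-pool (when it exists) absorbing any overflow on either side. Verifying that every class is eventually committed, that no committed element image is ever contradicted, and that the resulting $f$ is total and computable is the technical heart of the proof.
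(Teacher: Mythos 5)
Your opening move---using the computability of $Fin^{\A}$ and $Fin^{\B}$ to split each structure into its infinite-class part and its finite-class part---is exactly the paper's strategy: the proof given there consists of one sentence, observing that the restrictions to $Inf$ are computably isomorphic by Proposition \ref{p1} and the restrictions to $Fin$ are computably isomorphic by Proposition \ref{p2}. Your treatment of the infinite part is fine. The problem is in the finite part, at the commitment step. The apparent sizes $\sigma_i(s)$ are only limit-computable, so ``waiting enough stages before committing'' is not an effective instruction: for $k\neq k_0$ there are only finitely many size-$k$ classes, but there is no computable bound on the stage by which all of them have appeared and stopped growing. If you commit $[a_i]\leftrightarrow[b_j]$ when both have apparent size $k$ and $[a_i]$ later grows to some $k'>k$ while $[b_j]$ does not, the commitment is irrevocably wrong; the $k_0$-pool cannot absorb this, since the overgrown class need not have true size $k_0$ and its already-committed partner is in any case stuck at size $k$.

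This is not a repairable defect of your construction so much as a defect of the statement when the character is unbounded: by Theorem \ref{u2} and the remark following it, any unbounded $\Sigma^0_2$ character realized with finitely many infinite classes (for instance, exactly one class of each finite size, so that the ``at most one $k$'' hypothesis holds vacuously) admits isomorphic computable copies, both with $Fin$ computable, that are not computably isomorphic. The proposition is really about bounded character---that is precisely the hypothesis that Proposition \ref{p2}, which the paper's proof invokes, requires. Under that additional hypothesis your plan does go through, but by a simpler route than a priority argument: with a bound $K$ on the finite sizes, all but finitely many finite classes have size exactly $k_0$, the finitely many exceptional classes can be fixed as non-uniform parameters together with their images, and for any other finite class the event ``$[a]$ has reached $k_0$ elements'' is $\Sigma^0_1$ and guarantees the true size is $k_0$, so these classes can be matched greedily with no injury. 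You should either add the bounded-character hypothesis or explain why your argument is consistent with Theorem \ref{u2}; as written it cannot be.
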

 
\begin{proof} By Proposition \ref{p1}, $Inf^{\A}$ and $Inf^{\B}$ are
  computably isomorphic and by Proposition \ref{p2}, $Fin^{\A}$ and
  $Fin^{\B}$ are computably isomorphic. 
\end{proof}

In the remainder of this section, we will show that no other equivalence
structures are computably categorical. Here is the first case.

\begin{theorem}
\label{t4} Suppose that there exist $k_1 < k_2 \leq \omega$ such that
the computable equivalence structure $\mathcal{A}$ has infinitely many
equivalence classes of size $k_1$ and infinitely many classes of size
$k_2$. Then $\mathcal{A}$ is not computably categorical.
\end{theorem}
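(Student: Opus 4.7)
The plan is to construct, by direct diagonalization, a computable equivalence structure $\mathcal{B}\cong \mathcal{A}$ such that no partial computable function $\varphi_{e}$ is an isomorphism from $\mathcal{A}$ onto $\mathcal{B}$; for each $e$, the requirement $R_{e}$ states that $\varphi_{e}$ is not an isomorphism. The key structural fact I exploit is that $k_{2}>k_{1}\geq 1$ forces $k_{2}\geq 2$, so $\mathcal{A}$ contains infinitely many pairs of distinct $E^{\mathcal{A}}$-related elements (taken from its size-$k_{2}$ classes), and $\mathcal{A}$ has infinitely many equivalence classes of size $k_{1}$ and infinitely many of size $k_{2}$.

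I would build $\mathcal{B}$ on universe $\omega$ in stages, running a background construction in the style of Lemmas \ref{l2a} and \ref{l2b} to guarantee $\chi(\mathcal{B})=\chi(\mathcal{A})$ together with the correct number of infinite classes, while reserving a fresh pool of elements for each requirement. The strategy for $R_{e}$ is to set aside two fresh elements $b_{e}^{1},b_{e}^{2}$, initially in distinct singleton partial classes of $\mathcal{B}_{s}$, and to watch at each stage for distinct $a_{1},a_{2}\in A$ with $\varphi_{e,s}(a_{1})\downarrow =b_{e}^{1}$ and $\varphi_{e,s}(a_{2})\downarrow =b_{e}^{2}$. (Since $\varphi_{e}$ is a partial function and $b_{e}^{1}\neq b_{e}^{2}$, any such $a_{1},a_{2}$ are automatically distinct.) If no such pair ever appears, then $b_{e}^{1}$ or $b_{e}^{2}$ lies outside the image of $\varphi_{e}$, so $\varphi_{e}$ is not surjective and $R_{e}$ is satisfied once the background construction places these elements into $\mathcal{B}$.

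If such a pair appears at some stage $s$, I consult the computable relation $E^{\mathcal{A}}$ on $(a_{1},a_{2})$ and act accordingly. If $a_{1}E^{\mathcal{A}}a_{2}$, I commit $b_{e}^{1}$ and $b_{e}^{2}$ to lie in \emph{different} equivalence classes of $\mathcal{B}$, each filled out with fresh elements to size $k_{1}$; then $a_{1}E^{\mathcal{A}}a_{2}$ while $\lnot(\varphi_{e}(a_{1})E^{\mathcal{B}}\varphi_{e}(a_{2}))$, so $\varphi_{e}$ is not an isomorphism. If $\lnot(a_{1}E^{\mathcal{A}}a_{2})$, I instead commit $b_{e}^{1}$ and $b_{e}^{2}$ to the \emph{same} class, filled out to size $k_{2}$; then $\lnot(a_{1}E^{\mathcal{A}}a_{2})$ while $\varphi_{e}(a_{1})E^{\mathcal{B}}\varphi_{e}(a_{2})$, and again $\varphi_{e}$ fails to be an isomorphism. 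In either subcase $R_{e}$ is satisfied permanently on a fresh witness pair, so no injury between requirements arises.

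The main obstacle I anticipate is the bookkeeping needed to keep $\mathcal{B}\cong \mathcal{A}$ in the presence of these commitments. Each $R_{e}$ permanently spends at most two size-$k_{1}$ classes or one size-$k_{2}$ class of $\mathcal{B}$, taken from a fresh pool of elements. Because $\mathcal{A}$ has infinitely many classes of each of the sizes $k_{1}$ and $k_{2}$, the commitments from the infinitely many $R_{e}$ can be interleaved with the background construction without disturbing either $\chi(\mathcal{B})$ or the number of infinite classes. The resulting $\mathcal{B}$ is then a computable structure isomorphic to $\mathcal{A}$ on which every $\varphi_{e}$ is defeated, witnessing that $\mathcal{A}$ is not computably categorical.
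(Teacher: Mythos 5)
Your overall plan---build a computable copy $\mathcal{B}$ of $\mathcal{A}$ and defeat every candidate computable isomorphism $\varphi_e$---is reasonable, but the specific strategy for $R_e$ cannot be executed while keeping $\mathcal{B}$ computable. The witnesses $b_e^1,b_e^2$ are concrete elements of the universe of $\mathcal{B}$, so the total computable relation $E^{\mathcal{B}}$ must decide the single bit ``$b_e^1\,E^{\mathcal{B}}\,b_e^2$'' at some fixed finite stage, in particular before you can know whether $\varphi_e$ will ever converge on preimages of both witnesses, let alone whether those preimages are $E^{\mathcal{A}}$-related. Your strategy needs to choose between ``separate classes'' and ``same class'' only \emph{after} that $\Sigma^0_1$ event occurs. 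In a stagewise construction of a computable equivalence structure the only move that is always available after earlier commitments is to \emph{grow} a class by adding fresh elements; you can never merge two classes already declared inequivalent, nor split a class already declared equivalent. So whichever default you adopt for $b_e^1\,E^{\mathcal{B}}\,b_e^2$, one of your two subcases becomes unreachable, and $\varphi_e$ may force exactly that subcase. (If you instead leave the bit undecided until $R_e$ acts, then deciding $E^{\mathcal{B}}(b_e^1,b_e^2)$ amounts to deciding whether a $\Sigma^0_1$ event ever happens, and $\mathcal{B}$ is only a c.e.\ equivalence relation, not a computable one.)

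This is precisely why the paper diagonalizes on class \emph{sizes} rather than on the equivalence of a fixed pair: growing a witness class from size $k_1$ to size $k_2$ (or, when $k_2=\omega$, without bound) is a one-way move consistent with all prior commitments, and the hypothesis that both sizes occur infinitely often guarantees the character is unaffected. Concretely, the paper builds one copy $\mathcal{C}$ in which $\{a:card([a])=k_1\}$ is computable, and a second copy $\mathcal{D}$ coding a complete c.e.\ set $M$ by letting the class $[2i]$ stabilize at size $k_1$ if $i\notin M$ and grow to size $k_2$ if $i\in M$; a computable isomorphism between the copies would then compute $M$. If you want to retain your requirement-by-requirement framing, the repair is to make $R_e$'s witness a single reserved class that starts at size $k_1$ and is grown to size $k_2$ in response to $\varphi_e$, not a pair of elements whose equivalence is toggled late. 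A secondary point: ensuring $\mathcal{B}\cong\mathcal{A}$ is not just a matter of interleaving with Lemmas \ref{l2a} and \ref{l2b}; when $\chi(\mathcal{A})$ is unbounded and $\mathcal{A}$ has finitely many infinite classes you need the $s_1$-function machinery of Lemmas \ref{l3} and \ref{l4}, which the paper invokes by first deleting the pairs $\langle k_1,n\rangle$ from the character and then reattaching computably identifiable size-$k_1$ classes.
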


\begin{proof}
We will define structures $\mathcal{C}$ and $\mathcal{D}$ both isomorphic to 
$\mathcal{A}$ and such that $\{a:card([a]^{\mathcal{C}})=k_{1}\}$ is a
computable set, but 
\begin{equation*}
\{a:card([a]^{\mathcal{D}})=k_{1}\}
\end{equation*}
is a not computable. Then these two structures are not computably
isomorphic, so $\mathcal{A}$ is not computably categorical.

Observe that $\chi (\mathcal{A})$ is a $\Sigma _{2}^{0}$ set by part (c) of
Lemma \ref{l1} and therefore 
\begin{equation*}
K=\chi (\mathcal{A})\setminus \{\langle k_{1},n\rangle :n<\omega \}
\end{equation*}
is also $\Sigma _{2}^{0}$. Thus, if $\chi (\mathcal{A})$ is bounded,
then there is a computable equivalence structure with character $K$
and the same number of infinite equivalence classes as $\A$. If $\chi
(\mathcal{A})$ is unbounded, then by Lemma \ref{l3}, there is an
$s_{1}$-function $f$ for $\chi (\mathcal{A})$. If $k_{1}\neq m_{i}$
for any $i$, then $f$ will be an $s_{1}$-function for the character
$K$. If $k_{1}=m_{i}$, then define a new $s_{1}$-function $g$ for
$K$ by $g(j)=f(j)$ for $i<j$, and $g(j)=f(j+1)$ for $i\geq j$. Then
there is a computable structure $\B$ with character $K$ by
Lemma \ref{l4}. Now we can define a structure $\C\simeq
\mathcal{A}$ (that is, with character $\chi (\mathcal{A})$) by
setting
\begin{equation*}
(2a+1)\;E^{\mathcal{C}}\;(2b+1)\iff aE^{\mathcal{B}}b\text{,}
\end{equation*}
and 
\begin{equation*}
(2(mk_{1}+i))\;E^{\mathcal{C}}\;(2(nk_{1}+j))\iff m=n\text{,}
\end{equation*}
where $i,j<k_{1}$. In this structure $\mathcal{C}$, $\{a:card([a])=k_{1}\}$
is a computable set.

At the same time, we can build a structure $\mathcal{D} \simeq \mathcal{A}$
such that $\{a: card([a]^{\mathcal{D}})=k_1\}$ is not computable. There are
two cases, depending on whether $k_2$ is finite.

First suppose that $k_{2}$ is finite and build a computable structure
$\mathcal{C}^{\prime }$ with character $\{\langle k_{1},n\rangle
,\langle k_{2},n\rangle :n<\omega \}$ in which $\{a:card([a])=k_{1}\}$
is a complete c.e. set, as follows. Let $M$ be a complete c.e. set and
note that $M$ is both infinite and co-infinite. The equivalence
classes of $\mathcal{C}^{\prime }$ will have representatives $2i$ so
that $card([2i])=k_{1}$ if $ i\notin M$, and $card([2i])=k_{2}$ if
$i\in M$. The odd numbers will act as a pool of elements to fill out
the classes.

The construction of $\mathcal{C}^{\prime }$ is in stages. After stage $s$,
there will be classes $C_{i}^{s}$ containing $2i$ which will have $k_{1}$
elements if $i\notin M_{s}$ and $k_{2}$ elements if $i\in M_{s}$. At stage 
$s+1$, we add a new class containing $2s+2$ and also containing $k_{1}-1$ new
odd elements from the pool if $s+1\notin M_{s+1}$ and containing $k_{2}-1$
new elements from the pool if $s+1\in M_{s+1}$. Also, for any $i\leq s$ such
that $i\in M_{s+1}\setminus M_{s}$, we will add $k_{2}-k_{1}$ new elements
from the pool to the class with representative $[2i]$.

Next suppose that $k_2 = \omega$. Just modify the construction above so that
when $i \in M_s$, the class $[2i]$ contains $max\{k_1,s\}$ elements. The
details are left to the reader.

Finally we combine $\mathcal{A}$ and $\mathcal{C}^{\prime }$ into
$\mathcal{D2}$ by coding $\mathcal{A}$ on the odd numbers and
$\mathcal{C}^{\prime }$ on the even numbers. Then
\begin{equation*}
card([2a])=k_{1}\iff a\notin M\text{,}
\end{equation*}
so that $\{d:card([d])=k_{1}\}$ is not computable.
\end{proof}

We observe that in the proof of Theorem \ref{t4}, if $Fin^{\A}$ is
computable, then for the case that $k_2 < \omega$, $Fin^{\C}$ and
$Fin^{\D}$ will also be computable. Thus we have the following. 

\begin{proposition} \label{np2} For any $k_1 < k_2 < \omega$ and any 
computable equivalence structure $\A$ with $Fin^{\A}$ computable and
with infinitely many equivalence classes of size $k_1$ and infinitely
many equivalence classes of size $k_2$, there is a computable
equivalence structure $\B$ isomorphic to $\A$ with $Fin^{\B}$
computable such that $\A$ and $\B$ are not computably isomorphic.  
\end{proposition}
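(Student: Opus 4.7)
The plan is to reuse the two isomorphic copies $\C$ and $\D$ produced in the proof of Theorem \ref{t4}, to verify that under the stronger hypotheses of this proposition both have computable $Fin$-sets, and then to take $\B$ to be whichever of $\C,\D$ is not computably isomorphic to $\A$.

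From the proof of Theorem \ref{t4} I obtain computable copies $\C,\D\cong\A$ with $\{a:card([a]^{\C})=k_1\}$ computable and $\{a:card([a]^{\D})=k_1\}$ not computable. Any computable isomorphism between $\C$ and $\D$ would transport this distinction, so $\C$ and $\D$ are not computably isomorphic. Since computable isomorphism is an equivalence relation, $\A$ is computably isomorphic to at most one of $\C,\D$, and the other one witnesses the proposition provided only that $Fin$ is computable in each case.

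For $\D$: it places $\A$ on the odd numbers and the auxiliary structure $\C'$ on the even numbers. Because $k_2<\omega$, every equivalence class of $\C'$ has size $k_1$ or $k_2$ and is therefore finite, so $Fin^{\C'}=\omega$; hence $Fin^{\D}=\{2a:a\in\omega\}\cup\{2a+1:a\in Fin^{\A}\}$ is computable. For $\C$: it places an auxiliary structure $\B_0$ of character $K=\chi(\A)\setminus\{\la k_1,n\ra:n\in\omega\}$ (with the same number $r$ of infinite classes as $\A$) on the odd numbers, together with the explicit $k_1$-blocks $\{2(mk_1+i):i<k_1\}$ on the even numbers. I would argue that $\B_0$ can always be chosen with $Fin^{\B_0}$ computable: when $\chi(\A)$ is bounded this is exactly Lemma \ref{l2b}, and when $\chi(\A)$ is unbounded the Lemma \ref{l4}(ii) construction eventually deposits every pool element into a class that stabilizes at finite size, so the $r=0$ version has $Fin=\omega$, after which the $r\leq\omega$ infinite classes may be attached on a disjoint computable subset of the odd numbers.

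The main delicate point is this last verification that the Lemma \ref{l4}(ii)-style construction used inside the proof of Theorem \ref{t4} can be arranged to yield a computable $Fin^{\B_0}$ in the unbounded-character case. Once that is in hand, the proposition follows by letting $\B$ be whichever of $\C,\D$ fails to be computably isomorphic to $\A$.
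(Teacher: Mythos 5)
Your proposal is correct and follows essentially the same route as the paper: the paper obtains this proposition precisely by observing that, when $Fin^{\A}$ is computable and $k_2<\omega$, the copies $\C$ and $\D$ built in the proof of Theorem \ref{t4} both have computable $Fin$-sets, so whichever of them is not computably isomorphic to $\A$ serves as $\B$. Your extra verification of $Fin^{\C}$ and $Fin^{\D}$ (finite classes on the even side, Lemma \ref{l2b} or the Lemma \ref{l4} stabilization argument on the odd side) just makes explicit what the paper leaves as a one-line observation.
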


\bigskip 

For the next result, we want to consider the so-called isomorphism problem
for a class of structures. For total recursive functions $\phi _{e}:\omega
\times \omega \rightarrow \{0,1\}$, let $\mathcal{C}_{e}=(\omega ,\equiv
_{e})$ be the structure with 
\begin{equation*}
m\equiv _{e}n\iff \phi _{e}(\langle m,n\rangle )=1.
\end{equation*}
It is easy to check that $\{e:\mathcal{C}_{e}\ \text{is an equivalence
structure}\}$ is a $\Pi _{2}^{0}$ set. The following Lemma is immediate from Lemma %
\ref{l1}. \medskip 

\textbf{Convention:} We say that a set is $D_{3}^{0}$ if it is a difference
of two $\Sigma _{3}^{0}$ sets.

\begin{lemma}
\label{l4a}

\begin{enumerate}
\item[(a)] For any finite $r$, \newline $\{e: C_e \ \text{has at least
$r$ infinite equivalence classes}\}$ is a $\Sigma^0_3$ set.

\item[(b)] For any finite $r$, $\{e: C_e \ \text{has at exactly $r$ infinite
equivalence classes}\}$ is a $D^0_3$ set.

\item[(c)] $\{e: C_e \ \text{has infinitely many infinite equivalence
classes}\}$ is a $\Pi^0_4$ set.
\end{enumerate}
\end{lemma}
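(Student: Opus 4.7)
The plan is a routine quantifier count using uniform versions of Lemma \ref{l1}. Since $\phi_e$ is total for each $e$, the relation $\equiv_e$ is computable uniformly in $e$, so the predicate ``the $\equiv_e$-class of $a$ is infinite'' is $\Pi^0_2$ uniformly in $(e,a)$, just as in Lemma \ref{l1}(b).

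For (a), I express ``$C_e$ has at least $r$ infinite equivalence classes'' as
$$(\exists a_1, \ldots, a_r)\biggl[\bigwedge_{i<j\leq r}\neg(a_i \equiv_e a_j) \ \wedge \ \bigwedge_{i \leq r} ([a_i]^{C_e} \text{ is infinite})\biggr].$$
The bracketed formula is a finite conjunction of a computable predicate with $r$ many $\Pi^0_2$ predicates, hence is itself $\Pi^0_2$; prefixing the finite existential block gives $\Sigma^0_3$.

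For (b), ``exactly $r$ infinite classes'' equals ``at least $r$'' $\wedge$ ``not at least $r+1$'', which is a $\Sigma^0_3$ set intersected with a $\Pi^0_3$ set. Writing any $\Pi^0_3$ set as the complement of a $\Sigma^0_3$ set, this intersection is a difference of two $\Sigma^0_3$ sets, matching the $D^0_3$ convention just introduced.

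For (c), ``infinitely many infinite equivalence classes'' is expressed as $(\forall r)[C_e \text{ has at least } r \text{ infinite classes}]$. By part (a) the inner predicate is $\Sigma^0_3$ uniformly in $(e,r)$, so prefixing the universal quantifier yields $\Pi^0_4$. The only thing to verify carefully is that every application of Lemma \ref{l1} is uniform in the index $e$, which is immediate because the atomic diagram of $C_e$ is computable uniformly in $e$. No substantive obstacle arises here — as the author remarks, the lemma is immediate from Lemma \ref{l1}.
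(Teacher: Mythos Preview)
Your proposal is correct and is precisely the routine quantifier count the paper has in mind when it says the lemma is ``immediate from Lemma~\ref{l1}.'' You have simply written out the details the authors omit, and your handling of the uniformity in $e$ is adequate for the stated bounds.
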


We need to look at indices for $\Sigma _{2}^{0}$ sets in general. Let
$\langle S_{e}:e\in \omega \rangle $ be an enumeration of the $\Sigma
_{2}^{0} $ sets, that is,
\begin{equation*}
n\in S_{e}\iff (\exists m)(\langle m,n\rangle \notin W_{e})\text{.}
\end{equation*}
Then an enumeration $\langle K_{e}:e\in \omega \rangle $ of the $\Sigma
_{2}^{0}$ characters may be defined by 
\begin{equation*}
\langle k,n\rangle \in K_{e}\iff (\forall j\leq n)(\langle k,j\rangle \in
S_{e})\text{.}
\end{equation*}

\begin{lemma}
\label{l5} Let $K$ be a fixed infinite $\Sigma^0_2$ character. Then $\{e:
K_e = K\}$ is $\Pi^0_3$ complete.
\end{lemma}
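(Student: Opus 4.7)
The plan is to show both $\Pi_3^0$-containment and $\Pi_3^0$-hardness of the set $\{e : K_e = K\}$. For containment, I would write $K_e = K$ as the conjunction $(K_e \subseteq K) \wedge (K \subseteq K_e)$. Since $\langle k,n\rangle \in K$ is a fixed $\Sigma_2^0$ condition and $\langle k,n\rangle \in K_e$ is uniformly $\Sigma_2^0$ in $e$ (by inspection of the definition, cf.\ Lemma \ref{l1}(c)), each implication $\langle k,n\rangle \in K_e \Rightarrow \langle k,n\rangle \in K$ and its converse is a disjunction of a $\Pi_2^0$ formula with a $\Sigma_2^0$ formula, which lies in $\Pi_3^0$. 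Universal quantification over $\langle k,n\rangle$ preserves $\Pi_3^0$, so $\{e : K_e = K\} \in \Pi_3^0$.

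For hardness, I would reduce from the $\Pi_3^0$-complete set $\overline{Cof} = \{e : \omega\setminus W_e\text{ is infinite}\}$. Given $e$, define the uniformly $\Sigma_2^0$ set
\[
S_{f(e)} = \{\langle k, j\rangle : \langle k, j\rangle \in K \text{ and } |\omega\setminus W_e| \geq \langle k, j\rangle\},
\]
where ``$|\omega \setminus W_e| \geq n$'' unpacks to the $\Sigma_2^0$ statement $\exists y_1 < \cdots < y_n\, \bigwedge_{i \leq n} y_i \notin W_e$. By the $s$-$m$-$n$ theorem, this yields a computable $f$ such that $S_{f(e)}$ is the $\Sigma_2^0$ set indexed by $f(e)$. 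Using the character property of $K$ (so $\forall j \leq n\, \langle k, j\rangle \in K$ collapses to $\langle k, n\rangle \in K$) and the monotonicity of Cantor pairing in its second argument (so $\max_{j \leq n} \langle k, j\rangle = \langle k, n\rangle$), the induced character simplifies to
\[
\langle k, n\rangle \in K_{f(e)} \iff \langle k, n\rangle \in K \text{ and } \langle k, n\rangle \leq |\omega\setminus W_e|.
\]
Consequently, $K_{f(e)} = K$ iff every $\langle k,n\rangle \in K$ has pairing code at most $|\omega\setminus W_e|$; since $K$ is infinite, its codes are unbounded in $\omega$, so this is equivalent to $\omega\setminus W_e$ being infinite, i.e.\ $e \in \overline{Cof}$.

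The main obstacle is engineering $S_{f(e)}$ so that the induced $K_{f(e)}$ is automatically a character (downward-closed in the second coordinate) and matches $K$ precisely on a $\Pi_3^0$ condition. Using $|\omega \setminus W_e|$ as a ``budget'' combined with the monotonicity of pairing resolves both issues at once, and the infinitude of $K$ is exactly what is needed in the last step to convert ``all codes in $K$ are bounded by $|\omega \setminus W_e|$'' into ``$\omega \setminus W_e$ is infinite.''
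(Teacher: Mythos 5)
Your proof is correct and follows essentially the same strategy as the paper: the upper bound is routine, and for hardness you thin $K$ down to those pairs whose codes lie below a uniformly $\Sigma^0_2$ ``budget'' that is infinite exactly when the $\Pi^0_3$ condition holds, using the infinitude of $K$ to detect a finite budget. The paper reduces an arbitrary $\Pi^0_3$ set $P$ (written as $(\forall k)\,S(k,e)$ with $S$ a monotone $\Sigma^0_2$ matrix) by keeping $\langle k,n\rangle\in K$ only when $S(k,e)$ and $S(n,e)$ both hold, whereas you reduce from the complete set $\{e:\omega\setminus W_e\text{ is infinite}\}$ with $|\omega\setminus W_e|$ as the budget; these are interchangeable instances of the same construction, and both retain the property $K_{f(e)}\subseteq K$ that the paper uses later.
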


\begin{proof}
The set $\{e:K_{e}=K\}$ is certainly a $\Pi _{3}^{0}$ set. Now, let $P$ be
an arbitrary $\Pi _{3}^{0}$ set and let $S$ be a $\Sigma _{2}^{0}$ set such
that 
\begin{equation*}
e\in P\iff (\forall k)(\langle k,e\rangle \in S)\text{,}
\end{equation*}%
where we may assume, without loss of generality, that 
\begin{equation*}
\langle k+1,e\rangle \in S\Rightarrow \langle k,e\rangle \in S\text{.}
\end{equation*}
Next, we define a computable function $f$ such that 
\begin{equation*}
K_{f(e)}=K\iff e\in P\text{.}
\end{equation*}
Set 
\begin{equation*}
\langle k,n\rangle \in K_{f(e)}\iff \langle k,n\rangle \in K\ \&\ \langle
k,e\rangle \in S\ \&\ \langle n,e\rangle \in S\text{.}
\end{equation*}%
If $e\in P$, then $\langle k,e\rangle \in S$ for all $k$, so that for every 
$k$ and $n$, 
\begin{equation*}
\langle k,n\rangle \in K_{f(e)}\iff \langle k,n\rangle \in K\text{.}
\end{equation*}
If $e\notin P$, then there is some $k_{0}$ such that, for all $k\geq k_{0}$, 
$\lnot S(k,e)$. Since $K$ is infinite, there is some $\langle k,n\rangle \in
K$ such that either $k\geq k_{0}$ or $n\geq k_{0}$, and, therefore, $\langle
k,n\rangle \notin K_{f(e)}$.
\end{proof}

\bigskip 

We note that in the proof of Lemma \ref{l5}, $K_{f(e)}\subseteq K$ for
all $e $.


\begin{theorem}
\label{isom1} Let $\mathcal{A}$ be a  computable equivalence structure 
with character $K$ such that there does not exist a computable equivalence structure
$\mathcal{B}$ with character $K$ and with finitely many infinite
equivalence classes. Then $\{e: \mathcal{C}_e \simeq \mathcal{A} \}$
is $\Pi^0_3$ complete.
\end{theorem}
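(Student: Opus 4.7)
My plan is to prove both directions: $\Pi^0_3$ membership and $\Pi^0_3$ hardness. The key observation driving everything is that under the hypothesis, for any computable equivalence structure $\mathcal{E}$, we have $\mathcal{E} \simeq \mathcal{A}$ if and only if $\chi(\mathcal{E}) = K$. Indeed, the hypothesis forces that every computable equivalence structure with character $K$ must have infinitely many infinite equivalence classes (including $\mathcal{A}$ itself, by taking $\mathcal{B} = \mathcal{A}$). Also, by Lemma \ref{l2b}, $K$ must be unbounded (hence infinite), since a bounded character would admit a computable realization with only finitely many infinite classes. Two computable equivalence structures with the same character and both having infinitely many infinite classes are isomorphic, so the equivalence holds.

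For the upper bound, I would write
\begin{equation*}
\mathcal{C}_e \simeq \mathcal{A} \iff (\mathcal{C}_e \text{ is an equivalence structure}) \ \&\ \chi(\mathcal{C}_e) = K.
\end{equation*}
The first conjunct is $\Pi^0_2$ by Lemma \ref{l4a} (reading off the proof idea). The second conjunct is the conjunction of $K \subseteq \chi(\mathcal{C}_e)$ and $\chi(\mathcal{C}_e) \subseteq K$. Since $\chi(\mathcal{C}_e)$ is uniformly $\Sigma^0_2$ by Lemma \ref{l1}(c), and $K$ is fixed $\Sigma^0_2$, each inclusion has the form $\forall \langle k,n\rangle (A(k,n) \to B(k,n))$ with $A, B$ being $\Sigma^0_2$, hence $\Pi^0_2 \vee \Sigma^0_2 \subseteq \Pi^0_3$ pointwise and $\Pi^0_3$ after the universal quantifier. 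Thus the whole condition is $\Pi^0_3$.

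For $\Pi^0_3$-hardness, since $K$ is an infinite $\Sigma^0_2$ character, Lemma \ref{l5} gives that $\{e : K_e = K\}$ is $\Pi^0_3$-complete, so it suffices to reduce it to $\{e : \mathcal{C}_e \simeq \mathcal{A}\}$. I would apply the construction of Lemma \ref{l2a} \emph{uniformly} in the $\Sigma^0_2$ index of $K_e$, which is obtainable uniformly from $e$, to produce a computable equivalence structure $\mathcal{D}_e$ with character exactly $K_e$ and infinitely many infinite equivalence classes. By the s-m-n theorem there is a computable function $g$ with $\mathcal{C}_{g(e)} = \mathcal{D}_e$. Then $\mathcal{C}_{g(e)} \simeq \mathcal{A}$ iff $\chi(\mathcal{D}_e) = K$ iff $K_e = K$, completing the reduction.

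The main obstacle is conceptual rather than technical: one must recognize that the hypothesis precisely converts the isomorphism question into a character-equality question. Once that is in hand, both bounds follow by packaging the tools already developed (Lemmas \ref{l1}, \ref{l2a}, \ref{l2b}, \ref{l4a}, \ref{l5}). A minor point to verify is that Lemma \ref{l2a} is genuinely uniform in the $\Sigma^0_2$ index of its input character, but this is clear by inspection of its proof, since the set $B$ is defined uniformly from the computable $R$ underlying the $\Sigma^0_2$ definition.
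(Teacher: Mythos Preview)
Your proposal is correct and follows essentially the same approach as the paper: reduce isomorphism with $\mathcal{A}$ to equality of characters via the hypothesis, invoke Lemma~\ref{l5} for the $\Pi^0_3$ complexity of character equality, and uniformize Lemma~\ref{l2a} for the hardness direction. You give more explicit justification than the paper on why $K$ must be infinite (via Lemma~\ref{l2b}) and on the $\Pi^0_3$ upper bound computation, but the architecture is identical.
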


\begin{proof}
The set $\{e:\mathcal{C}_{e}\simeq \mathcal{A}\}$ is $\Pi _{3}^{0}$ by
Lemmas \ref{l1} and \ref{l5}, since $\mathcal{C}_{e}\simeq \mathcal{A}$ if
and only if $\chi (\mathcal{C}_{e})=K$.

For the completeness, let the computable function $f$ be as in the proof of
Lemma \ref{l5}. Use the technique of Lemma \ref{l2a} uniformly to create the
equivalence structure $\mathcal{C}_{g(e)}$ with character $K_{f(e)}$ and
infinitely many infinite equivalence classes. Then $\mathcal{C}_{g(e)}$ is
isomorphic to $K$ if and only if $K_{f(e)}=K$. The result now follows by
Lemma \ref{l5}.
\end{proof}

\bigskip 

\begin{theorem}
\label{u1} Let $K$ be an unbounded $\Sigma _{2}^{0}$ character. Let $%
\mathcal{A}$ be a computable equivalence structure with character $K$ such that there
does not exist a comptable equivalence structure $\mathcal{B}$ with character $K$ and
with finitely many infinite equivalence classes. Then $\mathcal{A}$ is not
computably categorical.
\end{theorem}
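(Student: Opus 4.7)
The plan is to assume for contradiction that $\mathcal{A}$ is computably categorical and then derive a contradiction by exhibiting a computable equivalence structure with character $K$ and no infinite equivalence classes, violating the hypothesis. To set things up, observe first that $\mathcal{A}$ must have infinitely many infinite equivalence classes, for otherwise $\mathcal{A}$ itself would already be a computable structure with character $K$ and only finitely many infinite classes. By Lemma \ref{l2a}, pick a computable $\mathcal{A}' \cong \mathcal{A}$ with $\chi(\mathcal{A}') = K$, infinitely many infinite classes, and $Fin^{\mathcal{A}'}$ a $\Pi_1^0$ set. Computable categoricity supplies a computable isomorphism $h : \mathcal{A} \to \mathcal{A}'$, so $Fin^{\mathcal{A}} = h^{-1}(Fin^{\mathcal{A}'})$ is $\Pi_1^0$ as the preimage of a $\Pi_1^0$ set under a computable function; equivalently, $Inf^{\mathcal{A}}$ is c.e.

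The heart of the plan is to construct from $\mathcal{A}$ a computable $s_1$-function $g$ whose limits $m_0 < m_1 < \cdots$ are distinct sizes of finite equivalence classes of $\mathcal{A}$, so that $\langle m_i, 1\rangle \in K$ for every $i$. The construction adapts Lemma \ref{l3}(ii): at stage $s+1$, enumerate one more step of the c.e. set $Inf^{\mathcal{A}}$, and then search for the least $p$ and a sequence of ``presumed-finite'' representatives $b_0, b_1, \ldots, b_{s+1}$ (each with $b_i \notin Inf^{\mathcal{A}}_{s+1}$) with strictly increasing observed class sizes $|\{a \leq p : a E b_i\}|$, subject to $b_i = a_i^s$ whenever the continuity condition of Lemma \ref{l3}(ii) permits. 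Set $a_i^{s+1} := b_i$, $p_{s+1} := p$, and let $g(i, s+1)$ be the observed size of $[b_i]$. Existence of a valid sequence at each stage follows from $K$ being unbounded, which forces $\mathcal{A}$ to have finite classes of infinitely many distinct sizes, so that a sufficiently large $p$ exposes the needed increasing sizes among presumed-finite representatives.

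Granted the computable $s_1$-function $g$, Lemma \ref{l4}(ii) applied with $r = 0$ produces a computable equivalence structure with character $K$ and no infinite equivalence classes, contradicting the hypothesis, so $\mathcal{A}$ is not computably categorical. The main obstacle will be the convergence analysis of the $s_1$-function construction in the presence of infinite classes that are only detected in the limit. One must verify that for each $i$, the sequence $a_i^s$ stabilizes to some $a_i \in Fin^{\mathcal{A}}$ and that $g(i,s) \to |[a_i]^{\mathcal{A}}|$. The argument proceeds by induction on $i$: an $a_i^s$ can change only when either $a_i^s$ is enumerated into $Inf^{\mathcal{A}}$ (at most once per element, and occurring only for truly infinite-class representatives) or when some lower-index $a_j^s$ changes or newly observed elements appear in the classes $[a_j^s]$ for $j \leq i$; the latter cease once each $a_j^s$ has settled on a representative of a finite class, which happens finitely often by induction.
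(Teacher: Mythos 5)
Your opening moves are sound: under the hypothesis $\mathcal{A}$ must have infinitely many infinite classes, Lemma \ref{l2a} supplies an isomorphic computable copy $\mathcal{A}'$ with $Fin^{\mathcal{A}'}$ a $\Pi_1^0$ set, and computable categoricity transfers this to $\mathcal{A}$, so $Inf^{\mathcal{A}}$ is c.e. The fatal problem is the next step: you claim that from any computable equivalence structure with unbounded character and c.e.\ $Inf$ one can extract a computable $s_1$-function whose limits are sizes of its finite classes. That claim is false, and the paper's own results refute it. Take the set $D$ of Theorem \ref{t3} and let $K=\{\langle k,n\rangle : k\in D,\ n\geq 1\}$, an unbounded $\Delta_2^0$ character. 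By Theorem \ref{t3} there is no computable structure with character $K$ and finitely many infinite classes (restricting such a structure to its finite classes, a computable set when there are only finitely many infinite classes, would give one with no infinite classes and with $\{k:\langle k,1\rangle\in K\}\subseteq D$). Yet Lemma \ref{l2a} produces a computable $\mathcal{A}'$ with character $K$, infinitely many infinite classes, and $Fin^{\mathcal{A}'}$ in $\Pi_1^0$. Your construction uses only the computability of $E$ and an enumeration of $Inf$, so it would run on this $\mathcal{A}'$ and output a computable $s_1$-function with $\langle m_i,1\rangle\in K$ for all $i$; Lemma \ref{l4}(ii) would then yield a computable structure with character $K$ and no infinite classes, a contradiction. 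So your construction cannot converge in general, and no construction with these inputs can.

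The precise break is in your induction: you assert that each $a_j^s$ eventually "settles on a representative of a finite class," but nothing forces this. A marker can land on a representative of an infinite class that has not yet appeared in the enumeration of $Inf^{\mathcal{A}}$; its observed size then grows without bound before the element is barred, and since an $s$-function must be nondecreasing in $s$, each replacement must have observed size at least as large, so the marker can chase an unending sequence of not-yet-recognized infinite classes. In Lemma \ref{l3}(ii) this cannot happen only because there are no infinite classes at all, which is exactly the hypothesis your setting lacks; compare Theorems \ref{nc1} and \ref{nc2}, which show that an infinite c.e.\ subset of $Fin^{\mathcal{A}}$ with unbounded sizes would suffice, but a $\Pi_1^0$ set need not contain one (and is never $\Sigma_1^0$-hard, so Theorem \ref{nc2} places no obstruction here). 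The underlying moral is that knowing $Fin^{\mathcal{A}}$ is $\Pi_1^0$ for a single pair of copies extracts too little from computable categoricity. The paper instead plays $\mathcal{A}$ against the uniformly constructed family $\mathcal{C}_{g(e)}$ and derives the contradiction from the $\Pi_3^0$-completeness of $\{e:\mathcal{C}_e\simeq\mathcal{A}\}$ (Theorem \ref{isom1}) against the $\Sigma_3^0$ upper bound that computable categoricity would impose on that index set; some argument of that global character appears to be needed.
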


\begin{proof}
If $\mathcal{A}=(\omega ,$ $\equiv _{A})$ is computably categorical, then $%
\mathcal{C}_{e}\simeq \mathcal{A}$ if and only if $\mathcal{A}$ and $%
\mathcal{C}_{e}$ are computably isomorphic. But this has a $\Sigma _{3}^{0}$
definition, that is, 
\begin{equation*}
(\exists a)[a\in Tot\ \&\ (\forall m)(\forall n)(m\equiv _{e}n\iff \phi
_{a}(m)\equiv _{A}\phi _{a}(n))]\text{.}
\end{equation*}%
This contradicts the $\Pi _{3}^{0}$ completeness from Theorem \ref{isom1}.
\end{proof}

\bigskip 

For characters with $s_{1}$ functions, a structure may have finitely many or
infinitely many infinite equivalence classes, and there is a higher
complexity.

\begin{theorem}
\label{isom2} Let $\mathcal{A}$ be a computable equivalence structure with unbounded
character $K$ and with a finite number $r$ of infinite equivalence classes.

\begin{enumerate}
\item[(a)] If $r=0$, then $\{e:C_{e}\simeq \mathcal{A}\}$ is $\Pi _{3}^{0}$
complete.

\item[(b)] If $r>0$, then $\{e:C_{e}\simeq \mathcal{A}\}$ is $D_{3}^{0}$
complete.
\end{enumerate}
\end{theorem}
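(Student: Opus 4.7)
The plan is to split the proof into the upper bounds for both parts and the two hardness arguments separately. For the upper bounds, observe that $C_{e}\simeq \mathcal{A}$ iff (i) $\chi (C_{e})=K$ and (ii) $C_{e}$ has exactly $r$ infinite equivalence classes. Part (c) of Lemma \ref{l1} provides a uniform $\Sigma _{2}^{0}$ index $h(e)$ of $\chi (C_{e})$, and then Lemma \ref{l5} makes (i) $\Pi _{3}^{0}$. By Lemma \ref{l4a}, (ii) is $\Pi _{3}^{0}$ when $r=0$ (it is the complement of the $\Sigma _{3}^{0}$ condition of part (a) of that lemma) and $D_{3}^{0}$ when $r>0$. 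Hence the join is $\Pi _{3}^{0}$ in part (a); in part (b) it lies in $D_{3}^{0}$ since the difference class $D_{3}^{0}$ is closed under intersection with $\Pi _{3}^{0}$: writing $D_{3}^{0}=\{A\cap C:A\in \Sigma _{3}^{0},C\in \Pi _{3}^{0}\}$, intersecting with a further $\Pi _{3}^{0}$ set only enlarges the $\Pi _{3}^{0}$ factor.

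For the $\Pi _{3}^{0}$-hardness of (a), I would follow the template of Theorem \ref{isom1} but replace Lemma \ref{l2a} with the construction of Lemma \ref{l4}(ii), which produces a structure with a prescribed finite number of infinite classes rather than infinitely many. Given a $\Pi _{3}^{0}$-complete set $P$, take the computable function $f$ from Lemma \ref{l5}, so that $K_{f(e)}\subseteq K$ and $K_{f(e)}=K$ iff $e\in P$, and take a computable $s_{1}$-function $h$ for $K$ provided by Lemma \ref{l3}(ii) (available since $K$ is unbounded). Uniformly in $e$, I run the construction of Lemma \ref{l4}(ii) with $K_{f(e)}$ as the input character, $h$ as the $s_{1}$-function, and $r$ set to $0$; the output $C_{g(e)}$ has no infinite classes, and I would argue that its character equals $K$ exactly when $K_{f(e)}=K$, so that $C_{g(e)}\simeq \mathcal{A}$ iff $e\in P$.

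For the $D_{3}^{0}$-hardness of (b), I would take a $D_{3}^{0}$-complete set of the form $U\setminus V$ with $U,V\in \Sigma _{3}^{0}$ and layer two independent sub-constructions on disjoint computable partitions of $\omega $: one in the spirit of Theorem \ref{isom1}, arranging that the portion intended to carry the infinite classes contains exactly $r$ of them iff $e\in U$; and one in the spirit of the reduction from part (a) above, arranging that the remaining portion has character $K$ iff $e\notin V$. Combining the two gives $C_{g(e)}\simeq \mathcal{A}$ iff $e\in U\setminus V$.

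The main obstacle appears already in (a): one must verify that when $K_{f(e)}\subsetneq K$, the $s_{1}$-function markers of Lemma \ref{l4}(ii) do not accidentally supply the pairs missing from $K_{f(e)}$, since each marker $j$ contributes a class of size $m_{j}$. A pair $\langle k,n\rangle \in K\setminus K_{f(e)}$ with $n\geq 2$ is always safe, as markers only contribute singleton multiplicities; but when $K$ consists entirely of singletons, one must either select $h$ so that $\{m_{j}:j\in \omega \}$ is a proper infinite subset of $\{k:\langle k,1\rangle \in K\}$, or fine-tune the reduction of Lemma \ref{l5} so that $K_{f(e)}$ always omits a pair outside $\{\langle m_{j},1\rangle :j\in \omega \}$ when $e\notin P$. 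The same subtlety has to be respected when merging the two sub-reductions in (b).
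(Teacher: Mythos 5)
Your upper bounds and your part (a) track the paper's proof closely. In particular, the obstacle you flag at the end --- that the marker classes of sizes $m_j$ coming from the $s_1$-function can repopulate pairs that $K_{f(e)}$ is supposed to omit --- is exactly the issue the paper confronts, and its resolution is a precise version of your second proposed fix: the paper splits $K$ into $K_1 = K\setminus\{\langle m_{2i},n\rangle : i,n<\omega\}$ and its complement, applies Lemma \ref{l5} to the infinite character $K_1$ rather than to $K$, and builds a structure with character $K_{\phi(e)}\cup(K\cap\{\langle m_{2i},n\rangle : i,n<\omega\})$. Since the unconditional part always contains $\langle m_{2i},1\rangle$ for every $i$, the hypothesis of Lemma \ref{l4}(ii) is genuinely satisfied (rather than feeding that construction a character it was not designed for), the structure is automatically infinite even when $K_{\phi(e)}$ is finite, and the union equals $K$ iff $K_{\phi(e)}=K_1$ iff $e\in P$. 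Note that your first fix, taking $\{m_j : j\in\omega\}$ to be a \emph{proper} infinite subset of $\{k:\langle k,1\rangle\in K\}$, is not quite enough: you need the complement to be infinite, since for $e\notin P$ the set $K_{f(e)}$ is only guaranteed to be a proper (in fact finite) subset of $K$ and could still cover a finite complement.

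The genuine gap is in part (b), in the $\Sigma^0_3$ half of the reduction. You propose to arrange ``exactly $r$ infinite classes iff $e\in U$'' for an arbitrary $\Sigma^0_3$ set $U$ ``in the spirit of Theorem \ref{isom1},'' but that theorem's technique (Lemma \ref{l2a} applied to $K_{f(e)}$) always produces infinitely many infinite classes and encodes only character equality; it provides no mechanism for switching a prescribed finite number of infinite classes on or off according to a $\Sigma^0_3$ event while holding the character fixed. That mechanism is the heart of part (b). The paper writes $U$ as $\{d:(\exists c)\,\langle c,d\rangle\in R\}$ with $R\in\Pi^0_2$ and a \emph{unique} witness $c$ when one exists, realizes $\langle c,d\rangle\in R$ as ``$\{t:\langle c,d,t\rangle\in T\}$ is infinite'' for computable $T$, and for each $c$ builds a component $\mathcal{D}_c$ with a movable \emph{test class} that is handed off and enlarged each time a new $t$ enters $T$: the test class becomes infinite iff the witness condition holds, and each time it moves, the finite class it previously impersonated is regenerated at its correct size $m_{c,i}$, so that $\chi(\mathcal{D}_c)$ is the same in both outcomes. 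The uniqueness of the witness is what caps the number of infinite classes at exactly $r$ (one per copy of $\mathcal{D}$). Without this construction, or some equivalent device producing a single conditional infinite class with the character preserved, the two layers of your reduction cannot be combined, so part (b) as written does not go through.
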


\begin{proof}
(a) Suppose that $\mathcal{A}$ has no infinite equivalence classes. Then 
$\{e:C_{e}\simeq \mathcal{A}\}$ is a $\Pi _{3}^{0}$ set, since $C_{e}\simeq 
\mathcal{A}{}$ if and only if the following two facts hold:

(1) $\chi (C_{e})=K$ (which is a $\Pi _{3}^{0}$ condition by Lemma
    \ref{l5});
\smallskip \noindent

(2) $C_e$ has no infinite equivalence classes (which is a $\Pi^0_3$
condition by Lemma \ref{l4a}).

For the completeness, let $P$ be a given $\Pi _{3}^{0}$ set. We
construct a reduction of $P$ to our set as follows. Let $g$ be an
$s_{1}$-function for $K $, let $m_{i}=lim_{s}g(i,s)$ and let
$f_{0}(i,s)=g(i,2s)$ and $f_{1}(i,s)=g(i,2s+1)$ so that $f_0$ and
$f_1$ are both $s_1$-functions. Now let
\begin{equation*}
K_{1}=K\setminus \{\langle m_{2i},n\}:i<\omega \}.
\end{equation*}
Let $\phi $ be given by the proof of Lemma \ref{l5} so that $K_{\phi
(e)}=K_{1}$ if and only if $e\in P$ and such that $K_{\phi (e)}\subseteq
K_{1}$ for all $e$. Then 
\begin{equation*}
K_{\phi (e)}\cup (K\cap \{\langle m_{2i},n\rangle :i\in \omega \})
\end{equation*}
always has an $s_{1}$-function, so we can apply the proof of Lemma \ref{l4}
to construct $C_{\psi (e)}$ with character 
\begin{equation*}
K_{\phi (e)}\cup (K\cap \{\langle m_{2i},n\rangle :i\in \omega \})\text{,}
\end{equation*}
which has no infinite equivalence classes. It is now clear that $e\in P$ if
and only if $K_{\phi (e)}=K_{1}$, which is if and only if $C_{\psi
(e)}\simeq \mathcal{A}$.

Note that if we simply apply Lemma \ref{l5} to $K$ itself, we find that 
$K_{\phi(e)}$ is finite whenever $K_{\phi(e)} \neq K$, so that $C_{\phi(e)}$
would also be finite, whereas we are assuming that all computable
equivalence structures $C_i$ have universe $\omega$.

(b) Suppose that $\mathcal{A}$ has exactly $r>0$ infinite equivalence
classes. Then $\{e:C_{e}\simeq \mathcal{A}\}$ is a $D_{3}^{0}$ set, that is,
the difference of two $\Sigma _{3}^{0}$ sets, since $C_{e}\simeq \mathcal{A}$
if and only if both of the following two facts hold:

(1) $\chi (C_{e})=K$ (which is a $\Pi _{3}^{0}$ condition by Lemma
    \ref{l5}); \smallskip \noindent

(2) $C_e$ has exactly $r$ infinite equivalence classes (which is a $D^0_3$
condition by Lemma \ref{l4a}).

For the completeness, let $P$ be a $\Pi _{3}^{0}$ set as in part (a) and let 
$Q$ be a $\Sigma _{3}^{0}$ set. Now let $R$ be a $\Pi _{2}^{0}$ set such
that, for all $d$, 
\begin{equation*}
d\in Q\iff (\exists c)(\langle c,d\rangle \in R)\text{.}
\end{equation*}
Without loss of generality, we may assume that when $d\in Q$, there exists a
unique $c$ such that $\langle c,d\rangle \in R$. It follows from the $\Pi
_{2}^{0}$ completeness of $\{e:W_{e}\ \text{is infinite}\}$ that there is a
computable set $T$ such that 
\begin{equation*}
\langle c,d\rangle \in R\iff (\{t:\langle c,d,t\rangle \in T\}\ \text{is
infinite})\text{.}
\end{equation*}
We will define a computable function $\theta $ so that, for all $d$ and $e$, 
\begin{equation*}
C_{\theta (d,e)}\simeq \mathcal{A}\iff d\in Q\ \&\ e\in P.
\end{equation*}
$C_{\theta (d,e)}$ will be the disjoint union of three components.

The first component will be a structure $\mathcal{B}$ that has no infinite
equivalence classes and has character $K_{\phi (e)}$, where 
\begin{equation*}
e\in P\iff K_{\phi (e)}=K_{1}\text{.}
\end{equation*}
This is constructed as in part (a).

The second component, $\mathcal{C}$, is fixed for all $e$, has no infinite
equivalence classes and has character 
\begin{equation*}
\{\langle m_{2i},n\rangle :\langle m_{2i},n+1\rangle \in K\}.
\end{equation*}
This might be a finite structure.

The third component, $\mathcal{D}$, will have character $\{\langle m_{2i},1
\rangle: i \in \omega\}$ and will have exactly $r$ infinite equivalence
classes if $d \in Q$ and no infinite equivalence classes if $d \notin Q$. It
suffices to give the argument when $r=1$ since then we can always take $r$
copies of $\mathcal{D}$ to get $r$ infinite equivalence classes.

From the $s_{1}$-function $f_{0}$ we create an infinite set of
$s_{1}$-functions $g_{c}$, where
\begin{equation*}
g_{c}(i,s)=f_{0}(2^{c}(2i+1))\text{.}
\end{equation*}
Let 
\begin{equation*}
m_{c,i}=lim_{s}g_{c}(i,s)\text{.}
\end{equation*}
Then $\mathcal{D}$ will be the disjoint union of equivalence
structures $\mathcal{D}_{c}$ having character
\begin{equation*}
K\cap \{\langle m_{c,i},n\rangle :i,n\in \omega \})\text{,}
\end{equation*}
and having exactly one infinite equivalence class if $\langle c,d\rangle \in
Q$ and no infinite equivalence class otherwise. It now suffices to construct 
$\mathcal{D}_{c}$ with universe $\omega $.

Fix $c$ and let $n_{i}=m_{c,i}$. The construction of the equivalence
relation $E$ on $\mathcal{D}_{c}$ is in stages. At stage $s$, there will be
equivalence classes $C_{i}^{s}$ of size $g_{c}(i,s)$ for all $i<s$. There
will be a particular $i=i^{s}$ such that 
\begin{equation*}
\{2t:\langle c,d,t\rangle \in T\}\subseteq C_{i}^{s}\text{.}
\end{equation*}
This class $C_{i}^{s}$ is the \emph{test} class. \emph{Initially}, we have
the empty structure. By stage $s$, all numbers $<s$ will have been assigned
to an equivalence class, and, hence, we will have decided whether $aEb$ for
all $a,b<2s$.

At \emph{stage} $t+1$, let $i=i^{t}$ and check whether $\langle
c,d,t\rangle \in T$. If it is, then we let $i^{t+1}=t$ and we create
the new class $C_{t}^{t+1}$ by adding to $C_{i}^{t}$ the element $2t$,
along with $g(t,t+1)-g(i,t)-1$ new odd numbers. We also create a new
class $C_{i}^{t+1}$ with $g(i,t+1)$ new odd numbers. For all $j$ such
that $j<t$ and $j\neq i$, we add $g_{c}(j,t+1)-g_{c}(j,t)$ odd numbers
to the class $C_{j}^{t}$ to obtain $C_{j}^{t+1}$.

If $\langle c,d,t \rangle \notin T$, then for all $j<t$, we simply add 
$g_c(j,t+1)-g_c(j,t)$ odd numbers to the class $C_j^t$ to obtain $C_j^{t+1}$
and we create the new class $C^t$ with exactly $g_c(t,t+1)$ new odd elements.

There are two possible outcomes of this construction. If $\{t:\langle
c,d,t\rangle \in T\}$ is finite, then after some stage $t$, $i^{t}$ becomes
fixed and, thus, has limit $i$. Then for every $i$, the class $C_{i}=\cup
_{t}C_{i}^{t}$ will have exactly $n_{i}$ elements, and every number will
belong to one of these classes. Thus, $\mathcal{D}_{c}$ has character 
$\{\langle n_{i},1\rangle :i\in \omega \}$ and has no infinite equivalence
classes. If $\{t:\langle c,d,t\rangle \in T\}$ is infinite, then 
$lim_{t}i^{t}=\infty $ and ${}\mathcal{D}_{c}$ has one additional, infinite
equivalence class, the test class, which is $\cup _{t}C_{i^{t}}^{t}$.

It follows that if $d\notin Q$, then each ${}\mathcal{D}_{c}$ has character 
$\{\langle m_{c,i},1\rangle :i\in \omega \}$ and has no infinite equivalence
classes, so that $\mathcal{D}$ has character $\{\langle m_{2i},1\rangle
:i\in \omega \}$ and has no infinite equivalence classes. If $d\in Q$, then
one of the $\mathcal{D}_{c}$ has one infinite equivalence class and the
others have no infinite equivalence classes. Thus, $\mathcal{D}$ has exactly
one infinite equivalence class, as desired.
\end{proof}

\begin{theorem}
\label{u2} Let $K$ be an unbounded $\Sigma _{2}^{0}$ character, and
let $\mathcal{A}$ be a computable equivalence structure with character $K$ and with
finitely many infinite equivalence classes. Then $\mathcal{A}$ is not
computably categorical.
\end{theorem}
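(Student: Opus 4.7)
The plan is to mimic the proof of Theorem \ref{u1}, but invoke Theorem \ref{isom2} in place of Theorem \ref{isom1} to get the needed lower bound on the complexity of the isomorphism problem, then contradict the existence of a $\Sigma_3^0$ description of isomorphism that computable categoricity would force.

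First, I would suppose for contradiction that $\mathcal{A} = (\omega, \equiv_A)$ is computably categorical. Under this assumption, $\mathcal{C}_e \simeq \mathcal{A}$ holds if and only if there is a computable isomorphism between them. Exactly as in the proof of Theorem \ref{u1}, this is witnessed by the $\Sigma_3^0$ condition
\begin{equation*}
(\exists a)\bigl[a \in Tot \ \&\ (\forall m)(\forall n)(m \equiv_e n \iff \phi_a(m) \equiv_A \phi_a(n))\bigr].
\end{equation*}
So $\{e: \mathcal{C}_e \simeq \mathcal{A}\}$ would be a $\Sigma_3^0$ set.

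Next, I would split into two cases based on $r$, the (finite) number of infinite equivalence classes of $\mathcal{A}$. Since $K$ is unbounded, Theorem \ref{isom2} applies in both cases. If $r=0$, then part (a) of that theorem gives that $\{e: \mathcal{C}_e \simeq \mathcal{A}\}$ is $\Pi_3^0$ complete, which is incompatible with being $\Sigma_3^0$ (otherwise every $\Pi_3^0$ set would be $\Sigma_3^0$). If $r>0$, then part (b) gives $D_3^0$ completeness; since every $\Pi_3^0$ set is trivially $D_3^0$ (taking the minuend to be $\omega$), a $D_3^0$ complete set that was also $\Sigma_3^0$ would again force $\Pi_3^0 \subseteq \Sigma_3^0$. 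Either conclusion contradicts the $\Sigma_3^0$ definition obtained from the assumed computable categoricity.

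The genuine content here is packaged entirely inside Theorem \ref{isom2}, so I do not expect any real obstacle in the present proof beyond checking that its hypotheses apply (unbounded $\Sigma_2^0$ character $K$, finitely many infinite classes) and recording the elementary fact that neither the class of $\Pi_3^0$ complete sets nor the class of $D_3^0$ complete sets is contained in $\Sigma_3^0$. The only small subtlety is making the case split on $r$ explicit so that the correct part of Theorem \ref{isom2} is invoked.
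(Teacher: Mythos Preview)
Your proposal is correct and follows essentially the same approach as the paper, which simply says the result follows from Theorem \ref{isom2} ``as in the proof of Theorem \ref{u1}.'' You have just unpacked this one-line reference, including the case split on $r$ and the observation that neither $\Pi_3^0$ completeness nor $D_3^0$ completeness is compatible with a $\Sigma_3^0$ definition.
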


\begin{proof}
This follows immediately from Theorem \ref{isom2} as in the proof of Theorem
\ref{u1}
\end{proof}

Note that since there are finitely many infinite equivalence classes,
for the structures $\A$ and $\B$ of Theorem \ref{u2} which are isomorphic
but not computably isomorphic, both $Fin^{\A}$ and $Fin^{\B}$ are
computable. 

\begin{theorem}
\label{isom3} Let $\mathcal{A}$ be an equivalence structure with an
unbounded character $K$, and with infinitely many infinite equivalence
classes. Suppose that there exists an equivalence structure $\mathcal{B}$
with character $K$ and with finitely many infinite equivalence classes. Then 
$\{e:\mathcal{C}_{e}\simeq \mathcal{A}\}$ is $\Pi _{4}^{0}$ complete.
\end{theorem}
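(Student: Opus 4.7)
The plan is to upgrade the construction of Theorem~\ref{isom2}(b) so that the $\D$-component contributes infinitely many infinite equivalence classes rather than just one, with the number of such classes encoding the chosen $\Pi_4^0$ predicate.

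For the upper bound, $\C_e \simeq \A$ iff $\chi(\C_e) = K$ (a $\Pi_3^0$ condition by Lemma~\ref{l5}) and $\C_e$ has infinitely many infinite equivalence classes (a $\Pi_4^0$ condition by Lemma~\ref{l4a}(c)); the conjunction is $\Pi_4^0$.

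For completeness, let $P$ be any $\Pi_4^0$ set. Using the $\Pi_2^0$-completeness of ``$W_i$ is infinite'' relativized to $\emptyset''$, we write $e \in P \iff |\{m : S(e,m)\}| = \infty$, where $S$ is uniformly $\Sigma_3^0$. Decompose $S(e,m) \iff \exists c\, R(e,m,c)$ with $R$ uniformly $\Pi_2^0$, uniformizing so that whenever $S(e,m)$ holds the witnessing $c$ is unique. Applying $\Pi_2^0$-completeness once more, rewrite $R(e,m,c)$ as ``$\{t : T(e,m,c,t)\}$ is infinite'' for a computable $T$.

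Fix a computable $s_1$-function $g$ for $K$ with limits $m_0 < m_1 < \cdots$ (available by Lemma~\ref{l3}(ii) applied to $\B$) and partition the positive integers into infinitely many infinite pieces $\{N_{m,c} : (m,c) \in \omega^2\}$. For each $(m,c)$, the subsequence of $g$ indexed by $N_{m,c}$ is a computable $s_1$-function $g_{m,c}$ whose limits enumerate $\{m_k : k \in N_{m,c}\}$. Build $\C_{h(e)}$ as the disjoint union of three kinds of components, paralleling the three components of Theorem~\ref{isom2}(b): a base $\B_{\text{base}}$ with character $K_1 = K \setminus \{\langle m_k, \ell \rangle : k \geq 1,\ \ell \geq 1\}$ and with the finite number $r_B$ of infinite classes inherited from $\B$, produced by Lemma~\ref{l4}(i); a fixed filler $\C_{\text{fix}}$ with character $\{\langle m_k, \ell\rangle : k \geq 1,\ \langle m_k, \ell+1\rangle \in K\}$ and no infinite classes; and, for each $(m,c) \in \omega^2$, a component $\D_{m,c}$ built exactly as the $\D_c$ of Theorem~\ref{isom2}(b), using $g_{m,c}$ and the computable test $T(e,m,c,\cdot)$. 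Each $\D_{m,c}$ then has character $\{\langle m_k, 1\rangle : k \in N_{m,c}\}$ and exactly one infinite equivalence class iff $R(e,m,c)$ holds.

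By the character bookkeeping of Theorem~\ref{isom2}(b), the $\D_{m,c}$ jointly contribute one class of each size $m_k$ for $k \geq 1$, the filler $\C_{\text{fix}}$ supplies the remaining multiplicities of these sizes, and $\B_{\text{base}}$ handles everything in $K$ not of that form, so the total character is $K$ independently of $e$. The number of infinite equivalence classes of $\C_{h(e)}$ is $r_B + |\{(m,c) : R(e,m,c)\}|$, which by the uniqueness of $c$ equals $r_B + |\{m : S(e,m)\}|$, and so is infinite iff $e \in P$. Thus $\C_{h(e)} \simeq \A$ iff $e \in P$, giving the required $\Pi_4^0$-completeness. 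The main technical obstacle is the two-dimensional indexing: Theorem~\ref{isom2}(b) only needed a one-parameter family $\D_c$ handling a single $\Sigma_3^0$ condition, whereas here we must thread infinitely many $\Sigma_3^0$ conditions through the double-indexed family $\D_{m,c}$ uniformly in $e$ while maintaining that the sum of all three components' characters equals exactly $K$.
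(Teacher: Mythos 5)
Your overall architecture matches the paper's: both proofs split $K$ into a ``base'' character plus infinitely many sub-characters carved out of the limits of an $s_1$-function, build each sub-piece with $0$ or $1$ infinite equivalence classes according to a uniformly $\Sigma_3^0$ condition (via the test-class construction of Theorem \ref{isom2}(b)), and let the outer universal quantifier of the $\Pi_4^0$ predicate control whether infinitely many pieces fire. The upper bound and the logical bookkeeping ($e\in P$ iff infinitely many $m$ satisfy $S(e,m)$, unique witnesses $c$, the count $r_B+|\{(m,c):R(e,m,c)\}|$) are correct.

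The gap is in the base component. You set $K_1=K\setminus\{\langle m_k,\ell\rangle: k\ge 1,\ \ell\ge 1\}$, deleting \emph{every} limit of the $s_1$-function $g$ except $m_0$, and claim that $\B_{\text{base}}$ with character $K_1$ and finitely many infinite classes is ``produced by Lemma \ref{l4}(i).'' That lemma requires a computable $s$-function whose limits realize exactly the multiplicities of $K_1$, and you exhibit none. This is not a formality: $K_1$ may still be unbounded (the $m_k$ need only form an increasing subsequence of the sizes occurring in $K$), and once all the $m_k$ are removed you have destroyed the only $s_1$-function you know of; by Lemma \ref{l3}, an unbounded character is realizable with finitely many infinite classes \emph{only if} it admits such a function, and Theorem \ref{t3} shows this is a genuinely restrictive condition, so realizability of $K_1$ cannot simply be assumed. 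The paper sidesteps this by reserving half of the index set of $g$ for the base --- its $K_0$ retains the sizes $m_i$ for odd $i$, so $K_0$ still carries an $s_1$-function and Lemma \ref{l4}(ii) applies --- and distributes only the remaining limits among the components $\mathcal{C}_{\phi(e,c)}$. Your proof is repaired the same way: add one more infinite piece $N_{\text{base}}$ to your partition of the positive integers, keep $\{\langle m_k,\ell\rangle: k\in N_{\text{base}}\}$ (with its full multiplicities) inside the base character, and realize the base using the restriction of $g$ to $N_{\text{base}}$.
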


\begin{proof}
It follows from Lemma \ref{l3} that $K$ possesses an $s_{1}$-function
$g$.  Let $m_{i}=lim_{s}g(i,s)$. Let $f_{0}(i,s)=g(i,2s)$ and
$f_{1}(i,s)=g(i,2s+1) $, so that both $f_{0}$ and $f_{1}$ are
$s_{1}$-functions. From the $s_{1}$-function $f_{0}$ we create an
infinite set of $s_{1}$-functions $g_{c}(i,s)$ , where
\begin{equation*}
g_{c}(i,s)=f_{0}(2^{c}(2i+1))\text{.}
\end{equation*}
Set $m_{c,i}=lim_{s}g_{c}(i,s)$. Let 
\begin{equation*}
K_{0}=K\setminus \{\langle m_{2i},n\}:i,n<\omega \}\text{,}
\end{equation*}
and for each $c$, let 
\begin{equation*}
K_{c+1}=K\cap \{\langle m_{c,i},n\rangle :i,n\in \omega \}\text{.}
\end{equation*}
Thus, $K$ is the disjoint union of the characters $K_{c}$. Now, $K_{0}$ has 
$s_{1}$-function $f_{1}$, and, therefore, there is a model $\mathcal{A}_{0}$
with character $K_{0}$ and no infinite equivalence classes.

Let $P$ be a $\Pi _{4}^{0}$ set. Let $Q$ be a $\Sigma _{3}^{0}$ relation
such that 
\begin{equation*}
e\in P\iff (\forall c)Q(e,c)\text{.}
\end{equation*}
We may assume that if $e\notin P$, then $\{c:Q(e,c)\}$ is finite.
Uniformizing the proof of Theorem \ref{isom2}, there is a computable binary
function $\phi $ such that $\mathcal{C}_{\phi (e,c)}$ has character $K_{c+1}$
for all $e$ and $c$, and has exactly one infinite equivalence class if 
$Q(e,c)$, and no infinite equivalence class if $\lnot Q(e,c)$.

Now define ${\mathcal{C}}_{\psi (e)}=(\omega ,E)$ as the effective union of
the structures $\mathcal{A}_{0},{}{\mathcal{C}}_{\phi (e,c)}$. That is, let 
$E_{0}$ be the equivalence relation of $\mathcal{A}_{0}$, and let $E_{e,c}$
be the equivalence relation of ${\mathcal{C}}_{\phi (e,c)}$. Let 
\begin{equation*}
E(2a,2b)\iff E_{0}(a,b)\text{,}
\end{equation*}
and for each $c$, let 
\begin{equation*}
E(2^{c}(2a+1),2^{c}(2b+1))\iff E_{e,c}(a,b)\text{;}
\end{equation*}
for any other $i,j$, we let $\lnot E(i,j)$. Then the structure
${\mathcal{C}}=(\omega ,E)$ clearly has character $K=\cup _{c}K_{c}$.

If $e\in P$, then $Q(e,c)$ holds for all $c$, so each ${\mathcal{C}}_{\phi
(e,c)}$ has an infinite equivalence class. Hence, ${}{\mathcal{C}}_{\psi (e)}$
has infinitely many infinite equivalence classes.

If $e\notin P$, then $Q(e,c)$ holds for finitely many $c$, so finitely many 
${\mathcal{C}}_{\phi (e,c)}$ have exactly one infinite equivalence class, and
the others have no infinite equivalence classes. Thus, ${\mathcal{C}}_{\psi
(e)}$ has finitely many infinite equivalence classes.

It follows that ${\mathcal{C}}_{\psi (e)}\simeq {\mathcal{A}}$ if and only
if $e\in P$.
\end{proof}

\bigskip 

\begin{theorem}
\label{u3} Let $\mathcal{A}$ be a computable equivalence structure with unbounded
character $K$ and with infinitely many infinite equivalence classes, such
that there exists a computable equivalence structure $\mathcal{B}$ with character $K$
and with finitely many infinite equivalence classes. Then $\mathcal{A}$ is
not $\Delta _{2}^{0}$ categorical.
\end{theorem}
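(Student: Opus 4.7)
The plan is to run the same contradiction-from-completeness argument used to deduce Theorem~\ref{u1} from Theorem~\ref{isom1}, but one level higher in the arithmetical hierarchy. Specifically, I would contradict the $\Pi_4^0$-completeness of $I_{\mathcal{A}} := \{e : \mathcal{C}_e \simeq \mathcal{A}\}$ provided by Theorem~\ref{isom3}, whose hypotheses are precisely those of Theorem~\ref{u3}.

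Suppose, for contradiction, that $\mathcal{A} = (\omega,\equiv_A)$ is $\Delta_2^0$ categorical. Then for any $e$ with $\mathcal{C}_e$ an equivalence structure on $\omega$, we would have $\mathcal{C}_e \simeq \mathcal{A}$ iff there exists a $\Delta_2^0$ isomorphism from $\mathcal{C}_e$ onto $\mathcal{A}$. Using the Limit Lemma, I would write the right-hand side as
\begin{equation*}
(\exists a)\bigl[\,\phi_a \in Tot \,\wedge\, (\forall x)(\exists s_0)(\forall s \geq s_0)[\phi_a(x,s) = \phi_a(x,s_0)] \,\wedge\, h := \lim\nolimits_s \phi_a(\cdot,s)\text{ is an isomorphism}\,\bigr].
\end{equation*}
The first conjunct is $\Pi_2^0$; the pointwise-convergence clause is visibly $\Pi_3^0$; and once those hold, the predicate $h(x) = y$ is $\Sigma_2^0$, so the bijectivity of $h$ and the biconditional $m \equiv_e n \iff h(m) \equiv_A h(n)$ are each $\Pi_3^0$. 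The leading $\exists a$ therefore yields a $\Sigma_4^0$ definition of $I_{\mathcal{A}}$, which, combined with the $\Pi_4^0$-completeness supplied by Theorem~\ref{isom3}, contradicts the fact that $\Sigma_4^0 \neq \Pi_4^0$.

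The only point that needs careful checking is that each of the bijectivity and preservation clauses for the $\Delta_2^0$ limit function really does stay at the $\Pi_3^0$ level: for instance, surjectivity reads $(\forall y)(\exists x)(\exists s_0)(\forall s \geq s_0)[\phi_a(x,s) = y]$, injectivity is analogous, and the $E$-preservation biconditional is an outer $\forall m,n$ over a $\Delta_2^0$ predicate. Once that complexity counting is in place, the argument is an exact arithmetical-hierarchy analogue of the proofs of Theorems~\ref{u1} and~\ref{u2}.
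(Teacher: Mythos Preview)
Your proposal is correct and follows essentially the same approach as the paper: assuming $\Delta_2^0$ categoricity, you show the isomorphism index set $\{e:\mathcal{C}_e\simeq\mathcal{A}\}$ is $\Sigma_4^0$, contradicting the $\Pi_4^0$-completeness established in Theorem~\ref{isom3}. The only cosmetic difference is that the paper encodes $\Delta_2^0$ functions via an oracle for a complete c.e.\ set $M$ (writing the condition as $(\exists a)[a\in Tot^{M}\ \&\ (\forall m)(\forall n)(m\equiv_e n \iff \phi_a^{M}(m)\equiv_A \phi_a^{M}(n))]$), whereas you use the Limit Lemma and do the arithmetical bookkeeping explicitly; both routes yield the same $\Sigma_4^0$ bound.
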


\begin{proof}
If $\mathcal{A}=(\omega ,\equiv _{A})$ is $\Delta _{2}^{0}$
categorical, then ${\mathcal{C}}_{e}\simeq {\mathcal{A}}$ if and only
if $\mathcal{A}$ and ${\mathcal{C}}_{e}$ are $\Delta _{2}^{0}$
isomorphic. Thus, the set $\{e:{\mathcal{C}}_{e}\simeq
{\mathcal{A\}}}$ has a $\Sigma _{4}^{0}$ definition.  That is, with a
c.e. complete set $M$ as an oracle, we have
\begin{equation*}
(\exists a)\left( [\right. a\in Tot^{M}\&(\forall m)(\forall n)(m\equiv
_{e}n\Longleftrightarrow \phi _{a}^{M}(m)\equiv _{A}\phi _{a}^{M}(n)\left.
)\right) ].
\end{equation*}
This contradicts the $\Pi _{4}^{0}$ completeness from Theorem \ref{isom3}.
\end{proof}

Combining these results, we obtain the following corollary.

\begin{corollary}
No equivalence structure with an unbounded character is computably
categorical. 
\end{corollary}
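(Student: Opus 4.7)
The plan is to prove the corollary by a straightforward case analysis, appealing directly to Theorems \ref{u1}, \ref{u2}, and \ref{u3} depending on how many infinite equivalence classes $\mathcal{A}$ has and whether the character $K$ can be realized with finitely many infinite classes. Since $\mathcal{A}$ is a computable equivalence structure, Lemma \ref{l1}(c) gives that $K = \chi(\mathcal{A})$ is a $\Sigma_2^0$ character, so the three theorems are applicable.

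First I would split into cases on the number of infinite equivalence classes of $\mathcal{A}$. If $\mathcal{A}$ has only finitely many infinite equivalence classes, then Theorem \ref{u2} applies directly (its hypotheses are that $K$ is an unbounded $\Sigma_2^0$ character and that $\mathcal{A}$ has finitely many infinite equivalence classes), yielding that $\mathcal{A}$ is not computably categorical.

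Otherwise $\mathcal{A}$ has infinitely many infinite equivalence classes, and I split further on whether there exists any computable equivalence structure $\mathcal{B}$ with character $K$ and only finitely many infinite equivalence classes. If no such $\mathcal{B}$ exists, Theorem \ref{u1} applies and gives the result. If such a $\mathcal{B}$ does exist, then Theorem \ref{u3} applies and shows that $\mathcal{A}$ is not even $\Delta_2^0$ categorical, which in particular means it is not computably categorical.

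There is no real obstacle here since the work has been done in the three preceding theorems; the only thing to check is that the three cases are exhaustive and that each invocation matches the stated hypotheses. Since the cases partition the possibilities for $\mathcal{A}$ (finitely many vs.\ infinitely many infinite classes, and in the latter case, existence or nonexistence of a computable companion with finitely many infinite classes), and since the unboundedness of $K$ is the standing assumption of the corollary, the case analysis closes and $\mathcal{A}$ is not computably categorical in every case.
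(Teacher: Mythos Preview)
Your proposal is correct and matches the paper's intended argument: the paper states the corollary as an immediate consequence of Theorems~\ref{u1}, \ref{u2}, and \ref{u3} without spelling out the case split, and your three-case analysis (finitely many infinite classes; infinitely many infinite classes with no companion $\mathcal{B}$; infinitely many infinite classes with such a $\mathcal{B}$) is exactly the decomposition those theorems dictate. The only cosmetic difference is that in the proof of the subsequent Theorem~\ref{ccc} the paper phrases the split for the infinitely-many-infinite-classes case in terms of whether $K$ has an $s_1$-function rather than existence of $\mathcal{B}$, but by Lemmas~\ref{l3} and~\ref{l4} these are equivalent, and your phrasing tracks the hypotheses of \ref{u1} and \ref{u3} more directly.
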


We can now establish that for computable equivalence structures computable
categoricity and relative computable categoricity coincide. 

\begin{theorem}
\label{ccc} All computably categorical equivalence structures are also
relatively computably categorical.
\end{theorem}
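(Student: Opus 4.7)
The plan is to show that the hypothesis of computable categoricity, combined with the non-categoricity results of Theorem~\ref{t4} and the preceding Corollary, forces $\mathcal{A}$ to satisfy the hypotheses of either Proposition~\ref{p1} or Proposition~\ref{p2}, either of which already yields relative computable categoricity. So the argument is essentially a bookkeeping reduction to the two sufficient conditions proved at the start of this subsection.

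Assume $\mathcal{A}$ is computably categorical. First, the Corollary immediately preceding the theorem tells us that the character of $\mathcal{A}$ must be bounded; that is, there is a uniform finite upper bound on the sizes of the finite equivalence classes. Second, the contrapositive of Theorem~\ref{t4} shows that there cannot exist two distinct $k_1 < k_2 \leq \omega$ each of which is the size of infinitely many equivalence classes of $\mathcal{A}$. Thus there is at most one cardinal $k \leq \omega$ for which $\mathcal{A}$ has infinitely many classes of size $k$.

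I then split into two cases according to whether $\mathcal{A}$ has finitely or infinitely many finite equivalence classes. In the first case Proposition~\ref{p1} applies immediately. In the second case, since the character is bounded, only finitely many finite sizes can appear at all, so by pigeonhole some finite size $k_0$ must appear infinitely often; applying the conclusion of Theorem~\ref{t4} once more, no other cardinal $k \leq \omega$ can appear infinitely often, and in particular $\mathcal{A}$ has only finitely many infinite equivalence classes and $k_0$ is the unique finite size repeated infinitely often. This is exactly the hypothesis of Proposition~\ref{p2}, yielding relative computable categoricity.

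There is no serious obstacle here. The real content of the theorem is already carried by the earlier propositions (the positive side) and by the Corollary together with Theorem~\ref{t4} (the negative side); the task is only to check that the structural restrictions derived from the negative results line up precisely with the hypotheses of the positive results, so that the classes of computably categorical and relatively computably categorical equivalence structures coincide.
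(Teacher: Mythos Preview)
Your proof is correct and follows essentially the same approach as the paper. The only differences are organizational: the paper argues the contrapositive (assuming $\mathcal{A}$ is not relatively computably categorical and showing it is not computably categorical) and, in the unbounded-character case, cites Theorems~\ref{u1}, \ref{u2}, \ref{u3} individually rather than invoking the Corollary that packages them; otherwise the case split and the appeal to Propositions~\ref{p1}, \ref{p2} and Theorem~\ref{t4} are the same.
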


\begin{proof}
Suppose that $\mathcal{A}$ is not relatively computably categorical
and has character $K$. It follows from Propositions \ref{p1} and
\ref{p2} that $\mathcal{A}$ has infinitely many finite equivalence
classes. First, suppose that $K$ is bounded. Then there exists a
finite $k$ such that $\mathcal{A}$ has infinitely many classes of size
$k$. Now it follows from Proposition \ref{p2} that either
$\mathcal{A}$ has infinitely many infinite classes, or there are two
finite numbers $k_{1}$ and $k_{2}$ such that $\mathcal{A}$ has
infinitely many classes of size $k_{1}$ and infinitely many classes of
size $k_{2}$. In either case, Theorem \ref{t4} implies that
$\mathcal{A}$ is not computably categorical.

Now, suppose that $K$ is unbounded. Then there are two possibilities.
Suppose first that $K$ has no $s_{1}$-function. Then, by Theorem
\ref{u1}, $\A$ is not computably categorical. Next, suppose that $K$
has an $s_{1}$-function and that $\mathcal{A}$ has infinitely many
infinite equivalence classes. Then, by Theorem \ref{u3}, $\mathcal{A}$
is not computably categorical. Finally, suppose that $\mathcal{A}$ has
only finitely many infinite equivalence classes. Then $\mathcal{A}$ is
not computably categorical by Theorem \ref{u2}.
\end{proof}

\section{$\Delta _{2}^{0}$ Categoricity of equivalence structures}

Next we continue with the analysis of $\Delta _{2}^{0}$ categoricity.

\begin{theorem}
\label{rdc1} If $\mathcal{A}$ is a computable equivalence structure with
bounded character, then $\mathcal{A}$ is relatively $\Delta^0_2$ categorical.
\end{theorem}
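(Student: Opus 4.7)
The plan is to exhibit a formally $\Sigma^0_2$ Scott family for $\mathcal{A}$ and invoke the Ash-Knight-Manasse-Slaman / Chisholm characterization recalled in the introduction: a computable structure is relatively $\Delta^0_2$ categorical iff it has such a family. Fix the bound $K$ so that every finite equivalence class of $\mathcal{A}$ has size at most $K$. The key consequence of boundedness is that ``$[a]$ is infinite'' is equivalent to ``$[a]$ has more than $K$ elements'', hence expressible by a computable $\Sigma_1$ formula (asserting the existence of $K+1$ pairwise distinct elements equivalent to $a$). Without bounded character, this predicate would be genuinely $\Pi_2$.

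For each tuple $\vec{a}=(a_1,\ldots,a_m)$ from $A$, I would take the Scott formula $\psi_{\vec{a}}(\vec{x})$ to be the conjunction of: (1) the quantifier-free description of which pairs satisfy $x_i E x_j$ and which $\lnot(x_i E x_j)$, copied from $\vec{a}$; (2) for each $i$ with $|[a_i]|=k_i\leq K$, the clause ``$|[x_i]|=k_i$'', which is the conjunction of a $\Sigma_1$ formula with a $\Pi_1$ formula (using Lemma \ref{l1}(a)) and hence computable $\Sigma_2$; and (3) for each $i$ with $[a_i]$ infinite, the $\Sigma_1$ clause ``$|[x_i]|>K$''. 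Each $\psi_{\vec{a}}$ is a computable $\Sigma_2$ formula, and since only finitely many such formulas arise in $m$ free variables, the full collection is even c.e., a fortiori $\Sigma^0_2$. Notice that, unlike in Propositions \ref{p1} and \ref{p2}, no fixed parameters are required: the class sizes are pinned down by the formula itself.

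Verification of the Scott family conditions is then routine. Condition (i) holds by construction. For condition (ii), suppose $\vec{a}$ and $\vec{b}$ satisfy the same $\psi$; then they share the same $E$-pattern, and for each $i$ the classes $[a_i]$ and $[b_i]$ are both infinite or both finite of the same size. Match the designated classes $[a_i]\leftrightarrow[b_i]$, and then match the remaining classes of $\mathcal{A}$ sizewise: removing the same finite multiset of class sizes from the two views of $\mathcal{A}$ leaves equivalence structures with the same character and the same (possibly infinite) count of infinite classes, so such a pairing exists. Within each matched pair extend the partial map $a_j\mapsto b_j$ to any bijection between the two classes. The resulting total map is an automorphism of $\mathcal{A}$ sending $\vec{a}$ to $\vec{b}$.

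This produces the desired formally $\Sigma^0_2$ Scott family, and hence $\mathcal{A}$ is relatively $\Delta^0_2$ categorical. I anticipate no serious obstacle beyond the complexity bookkeeping: the whole argument rests on the bounded character allowing us to express ``class infinite'' existentially, after which the automorphism-building step is essentially the one already used in Propositions \ref{p1} and \ref{p2}, upgraded only by inserting the size clauses (2) and (3) in place of the extra parameters.
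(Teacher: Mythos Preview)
Your proposal is correct and follows essentially the same approach as the paper: both exploit the bounded character to express ``$[a]$ is infinite'' as the $\Sigma_1$ condition ``$[a]$ has more than $K$ elements'', assemble a Scott formula from the $E$-pattern together with the cardinality of each $[a_i]$, and then build the automorphism by the class-matching argument of Propositions~\ref{p1} and~\ref{p2}. Your write-up is in fact a bit more careful about the complexity bookkeeping (e.g., noting explicitly that ``$|[x_i]|=k_i$'' is $\Sigma_1\wedge\Pi_1$ and that no parameters are needed), but the underlying idea is identical.
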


\begin{proof}
Let $k$ be the maximum size of any finite equivalence class. The key
fact here is that $[a]$ is infinite if and only if $[a]$ contains at
least $k+1$ elements, which is a $\Sigma _{2}^{0}$ condition. By Lemma
\ref{l1}, there is a $\Delta _{2}^{0}$ formula which characterizes the
elements $a$ with a finite equivalence class of size $m$. Then a Scott
formula for the tuple $\langle a_{1},\dots ,a_{m}\rangle $ includes a
formula $\psi _{i}(x_{i})$ for each $a_{i}$, giving the cardinality of
$[a_{i}]$, together with formulas $\psi _{i,j}(x_{i},x_{j})$ for each
$i$ and $j$ which express whether $a_{i}Ea_{j}$. It now follows, as in
the proof of Proposition \ref{p1}, that whenever $\overrightarrow{a}$
and $\overrightarrow{b}$ have the same Scott formula, then there is an
automorphism of $\mathcal{A}$ taking $\overrightarrow{a}$ to
$\overrightarrow{b}$. Thus, $\mathcal{A}$ is relatively 
$\Delta_{2}^{0}$ categorical.
\end{proof}

\begin{theorem}
\label{rdc2} If $\mathcal{A}$ is a computable equivalence structure with
finitely many infinite equivalence classes, then $\mathcal{A}$ is relatively 
$\Delta^0_2$ categorical.
\end{theorem}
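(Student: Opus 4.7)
The plan is to produce a formally $\Sigma^0_2$ Scott family for $\mathcal{A}$ and then invoke the Ash--Knight--Manasse--Slaman/Chisholm characterization of relative $\Delta^0_2$ categoricity. Since $\mathcal{A}$ has only finitely many infinite equivalence classes, I will first fix representatives $d_1, \dots, d_p$ of those classes and use them as parameters throughout. With these parameters, the quantifier-free formula $\bigvee_{j \leq p} x E d_j$ already expresses that $[x]$ is infinite, so the entire complication will be restricted to identifying the exact size of the finite equivalence classes that appear in a tuple.

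The key observation is that ``$[x]$ is finite of size exactly $m$'' is expressible by the computable $\Sigma_2$ formula
\begin{equation*}
\phi_m(x) \;\equiv\; (\exists y_1, \dots, y_m)\Bigl[\bigwedge_{i} y_i E x \;\wedge\; \bigwedge_{i<j} y_i \neq y_j \;\wedge\; (\forall z)\bigl(z E x \rightarrow \bigvee_i z = y_i\bigr)\Bigr].
\end{equation*}
For each tuple $\vec{a} = (a_1, \dots, a_n)$ from $A$, I then define its Scott formula $\psi_{\vec{a}}(x_1,\dots,x_n)$ as the conjunction of three pieces: (i) the quantifier-free equivalence diagram recording $x_i E x_j$ or $\lnot(x_i E x_j)$ according to $\vec{a}$; (ii) the quantifier-free parameter diagram recording $x_i E d_j$ or $\lnot(x_i E d_j)$; and (iii) for each $i$ with $[a_i]$ finite, the size formula $\phi_{m_i}(x_i)$, where $m_i = \mathrm{card}([a_i])$. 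Each conjunct is computable $\Sigma_2$, so $\psi_{\vec{a}}$ is computable $\Sigma_2$; and the collection $\Phi$ of all such formulas (indexed over the finitely-many-data it takes to describe one) is in fact c.e., hence certainly $\Sigma^0_2$.

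It remains to verify that $\Phi$ is a Scott family. Clause (i) of the definition is immediate. For clause (ii), suppose $\vec{a}$ and $\vec{b}$ both satisfy $\psi_{\vec{a}}$. Then the equivalence patterns among the tuples agree; each $a_i$ lies in the infinite class $[d_j]$ if and only if $b_i$ does; and whenever $[a_i]$ is finite, $[b_i]$ has the same size. I then build an automorphism $H$ of $\mathcal{A}$ sending $\vec{a}$ to $\vec{b}$ by: fixing every $d_j$; on each infinite class $[d_j]$, extending the forced partial map $a_i \mapsto b_i$ to a bijection of $[d_j]$ onto itself; on each finite class $[a_i]$ appearing in the tuple, extending $a_i \mapsto b_i$ to a bijection $[a_i] \to [b_i]$ (possibly between distinct classes of the same size); and letting $H$ be the identity on all remaining classes.

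This is essentially the Scott family of Proposition \ref{p1} augmented by the size formulas $\phi_m$, and the only point of real content is the complexity bookkeeping: the inner universal quantifier in $\phi_m$ is what forces the family up to $\Sigma_2$, since without a uniform bound on finite class sizes the $\Sigma_1$ level cannot separate ``size exactly $m$'' from ``size at least $m$.'' I expect no genuine obstacle beyond writing this carefully; the result then follows from the Ash--Knight--Manasse--Slaman/Chisholm theorem stated in the introduction.
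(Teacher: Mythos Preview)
Your proposal is correct and takes essentially the same approach as the paper: fix parameters $d_1,\dots,d_p$ for the infinite classes so that membership in an infinite class is quantifier-free, attach the $\Sigma_2$ ``size exactly $m$'' formula to each finite-class coordinate, and combine with the equivalence diagram to obtain a formally $\Sigma^0_2$ Scott family. The paper's own proof simply cites Proposition~\ref{p2} and Theorem~\ref{rdc1} for these two ingredients rather than writing them out, but the content is identical.
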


\begin{proof}
There is a $\Sigma _{1}^{0}$ Scott formula for each element with an infinite
equivalence class, by the proof of Proposition \ref{p2}. There is a $\Sigma
_{1}^{0}$ Scott formula for each element with a finite equivalence class, by
the proof of Theorem \ref{rdc1}. It now follows, as before, that $\mathcal{A}
$ is relatively $\Delta _{2}^{0}$ categorical.
\end{proof}

\bigskip 

This leads to a stronger result for structures $\A$ with $Fin^{\A}$
computable.

\begin{theorem} \label{nt1} For any two isomorphic computable equivalence
  structures $\A$ and $\B$ such that $Fin^{\A}$ and $Fin^{\B}$ are
  both computable, $\A$ and $\B$ are $\Delta^0_2$ categorical. 
\end{theorem}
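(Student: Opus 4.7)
The plan is to split each structure along its computable $Fin$ set, handle the infinite and finite parts separately, and combine. Any isomorphism $\A\to\B$ preserves class sizes, so it sends $Inf^{\A}$ onto $Inf^{\B}$ and $Fin^{\A}$ onto $Fin^{\B}$; hence the restricted substructures $\A|_{Inf^{\A}}$ and $\B|_{Inf^{\B}}$ are isomorphic computable equivalence structures with no finite classes, and $\A|_{Fin^{\A}}$ and $\B|_{Fin^{\B}}$ are isomorphic computable equivalence structures with no infinite classes.

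For the infinite parts, Proposition \ref{p1} applies to $\A|_{Inf^{\A}}$ (``finitely many'' includes zero), so it is relatively computably categorical. Since $\B|_{Inf^{\B}}$ is a computable copy, there is a computable isomorphism $h_{inf}\colon\A|_{Inf^{\A}}\to\B|_{Inf^{\B}}$, which is in particular $\Delta^0_2$.

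For the finite parts, I would run a back-and-forth relative to $\emptyset'$. The key observation is that for $a\in Fin^{\A}$ the exact size $|[a]^{\A}|$ is $\emptyset'$-computable: by Lemma \ref{l1}(a), the condition ``$|[a]|=k$'' is $\Delta^0_2$ uniformly in $a$ and $k$, and the search over $k$ must halt because $[a]^{\A}$ is finite; the same holds for $\B$. Using $\emptyset'$ we therefore enumerate the canonical class representatives (least elements) of $\A|_{Fin^{\A}}$ and $\B|_{Fin^{\B}}$ together with their sizes. The back-and-forth matches whole classes one at a time: at even stages take the least $a\in Fin^{\A}$ outside the domain of the current partial isomorphism $\sigma_s$, compute $k=|[a]^{\A}|$, locate an unused size-$k$ class $D$ of $\B|_{Fin^{\B}}$, and extend $\sigma_s$ by any bijection from $[a]^{\A}$ onto $D$; odd stages reverse the roles. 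Such an unused $D$ always exists because $\A\simeq\B$ forces $\A|_{Fin^{\A}}$ and $\B|_{Fin^{\B}}$ to have identical (finite or infinite) numbers of classes of each size $k$. The resulting isomorphism $h_{fin}$ is $\Delta^0_2$, and $h_{inf}\cup h_{fin}$ is the desired $\Delta^0_2$ isomorphism $\A\to\B$.

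The main (mild) obstacle is simply bookkeeping: verifying that every $\emptyset'$-search in the construction terminates and that the sizes continue to match throughout the back-and-forth. Termination of the size computations is immediate from finiteness of the relevant classes, and termination of the search for an unused matching class follows from character-matching between $\A$ and $\B$, so no additional combinatorial work beyond the standard back-and-forth bookkeeping is required.
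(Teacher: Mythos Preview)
Your proof is correct and follows the same strategy as the paper: split along the computable sets $Fin$ and $Inf$, apply Proposition~\ref{p1} to the infinite part to get a computable isomorphism, handle the finite part with a $\Delta^0_2$ isomorphism, and combine. The only difference is that for the finite part the paper simply cites Theorem~\ref{rdc2} (since $\A|_{Fin^{\A}}$ has zero, hence finitely many, infinite classes), whereas you spell out the $\emptyset'$-back-and-forth directly; this is the same content unpacked.
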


\begin{proof} It follows from Proposition \ref{p1} that $Inf^{\A}$ and
  $Inf^{\B}$ are computably isomorphic and it follows from Theorem
  \ref{rdc2} that $Fin^{\A}$ and $Fin^{\B}$ are $\Delta^0_2$
  isomorphic. Now the two mappings may be combined to define a
  $\Delta^0_2$ isomorphism between $\A$ and $\B$ since $Fin^{\A}$ and
  $Fin^{\B}$ are computable. 
\end{proof}

In fact, we observe that this result still holds if we only assume
that $Fin^{\A}$ and $Fin^{\B}$ are both $\Delta^0_2$. $Inf^{\A}$ and
$Inf^{\B}$ are still $\Delta^0_2$ isomorphic and there is a
$\Delta^0_2$ enumeration of the finite equivalence classes of $\A$ and
$\B$ which will induce a $\Delta^0_2$ isomorphism between $Fin^{\A}$
and $Fin^{\B}$. 

\begin{theorem} Let $\mathcal{A}$ be a computable equivalence
  structure with infinitely many infinite equivalence classes, and
  with unbounded character which has a computable $s_1$-function.
  Then $\mathcal{A}$ is not $\Delta^0_2$-categorical. \end{theorem}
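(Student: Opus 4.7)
The plan is to reduce this directly to Theorem \ref{u3}, which handled the case where the character $K$ admits a companion structure with only finitely many infinite equivalence classes. So the real work is to produce such a companion and then invoke the earlier result.

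First, I would observe that the character $K = \chi(\mathcal{A})$ is a $\Sigma_2^0$ set by Lemma \ref{l1}(c), which is the mild hypothesis needed to apply Lemma \ref{l4}(ii). Next, since by assumption $K$ has a computable $s_1$-function $f$ with limits $m_i = \lim_s f(i,s)$, and since such an $s_1$-function for a character is one whose limits $m_i$ are genuine class sizes occurring in $K$ (so that $\langle m_i, 1\rangle \in K$ for all $i$), Lemma \ref{l4}(ii) applies and produces a computable equivalence structure $\mathcal{B}$ with $\chi(\mathcal{B}) = K$ and exactly $r$ infinite equivalence classes, for any chosen finite $r$ (take $r = 0$, say).

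With this $\mathcal{B}$ in hand, the hypotheses of Theorem \ref{u3} are satisfied: $\mathcal{A}$ is a computable equivalence structure with unbounded character $K$, infinitely many infinite equivalence classes, and there exists a computable equivalence structure $\mathcal{B}$ with character $K$ and only finitely many infinite equivalence classes. Theorem \ref{u3} then gives immediately that $\mathcal{A}$ is not $\Delta_2^0$-categorical.

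There is essentially no obstacle here; the content of the theorem is contained entirely in the earlier machinery. The only small point worth noting is that the computable $s_1$-function is exactly what distinguishes this case from the hypothesis of Theorem \ref{u1} (where $K$ has no $s_1$-function and the structure is handled via $\Pi_3^0$-completeness of the isomorphism problem), and it is precisely what Lemma \ref{l4}(ii) was designed to exploit in order to build the companion structure $\mathcal{B}$ needed to invoke the $\Pi_4^0$-completeness argument from Theorem \ref{isom3} that underlies Theorem \ref{u3}.
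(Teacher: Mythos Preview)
Your reduction is correct: since $K=\chi(\mathcal{A})$ is $\Sigma^0_2$ by Lemma~\ref{l1}(c) and $K$ has a computable $s_1$-function in the sense of the paper (limits $m_i$ with $\langle m_i,1\rangle\in K$), Lemma~\ref{l4}(ii) yields a computable $\mathcal{B}$ with $\chi(\mathcal{B})=K$ and no infinite classes, and Theorem~\ref{u3} finishes the job. Nothing is missing.

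However, the paper does \emph{not} argue this way. Instead of invoking the index-set machinery behind Theorem~\ref{u3} (which goes through the $\Pi^0_4$-completeness of Theorem~\ref{isom3}), the paper gives a direct priority construction: it builds two copies $\mathcal{B}_1\simeq\mathcal{B}_2\simeq\mathcal{A}$ and diagonalizes against every partial $\Delta^0_2$ function $\varphi_e^{\Delta^0_2}$, arranging for a witness $x_e$ with $|[x_e]^{\mathcal{B}_1}|\neq|[\varphi_e^{\Delta^0_2}(x_e)]^{\mathcal{B}_2}|$ by using the $s_1$-function to grow classes to different finite sizes (as in Lemma~\ref{l4}) inside a framework based on Lemma~\ref{l2a}. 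Your route is shorter and makes transparent that the theorem is really a reformulation of Theorem~\ref{u3} via the equivalence between ``$K$ has an $s_1$-function'' and ``some computable model of $K$ has finitely many infinite classes'' (Lemmas~\ref{l3} and~\ref{l4}). The paper's route is more self-contained and hands-on: it exhibits the two non-$\Delta^0_2$-isomorphic copies explicitly rather than inferring their existence from the complexity of the isomorphism problem.
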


\begin{proof}
We will build $\mathcal{B}_1 \simeq \mathcal{B}_2 \simeq \mathcal{A}$
in such a way as to diagonalize against $\Delta^0_2$ isomorphisms
between $\mathcal{B}_1$ and $\mathcal{B}_2$.  Let
$\varphi^{\Delta^0_2}_e$ be a computable enumeration of all partial
functions which are computable with a $\Delta^0_2$ oracle.  We will seek to meet the
following requirements:

\begin{tabular}{rl}
$R_e$: & $\forall x\ \varphi_e^{\Delta^0_2}\downarrow \Rightarrow
  \exists x_e \left| \left[
  \varphi_e^{\Delta^0_2}(x_e)\right]^{\mathcal{B}_2}\right| \neq
  \left| \left[ x_e \right]^{\mathcal{B}_1} \right|$\\
\end{tabular}

We will construct $\mathcal{B}_i$ exactly as in Lemma \ref{l2a}, with
the following exceptions.  We begin by partitioning $\omega$ into
two infinite disjoint parts.  One will provide ordinary
elements for the domain.  The other we will enumerate by $\{x_e\}_{e
  \in \omega}$, and so $x_e$ will serve as a witness for $R_e$.  Also,
in place of the elements $(k,n,w,z)$ elements, we will use elements of
the form $(k,n,w,z,q)$, where all except the last coordinate work just
as before.  Until permission is given to use elements where
$q>\tilde{q}$, no such elements will be used by the construction.

At stage $s$, we say that $R_e$ requires attention if $x_e \leq s$ and $\forall x \leq
s\ \varphi_e^{\Delta^0_2}(x)\downarrow \wedge \left| \left[
  \varphi_e^{\Delta^0_2}(x_e)\right]^{\mathcal{B}_2}\right| \neq
  \left| \left[ x_e \right]^{\mathcal{B}_1} \right|$.  For every
  $e$ that requires attention at stage $s$, we will act in the
  following way.  We will want to arrange that $\left[
    \varphi_e^{\Delta^0_2}(x_e)\right]^{\mathcal{B}_2}$ is of some
  finite size, and that $\left[ x_e \right]^{\mathcal{B}_1}$ is
  larger.

To achieve this, we will add elements to $\left[
  \varphi_e^{\Delta^0_2}(x_e)\right]^{\mathcal{B}_2}$,
  following the $s_1$-function, as in Lemma \ref{l4}.  All other
  instructions about adding elements to $\left[
  \varphi_e^{\Delta^0_2}(x_e)\right]^{\mathcal{B}_2}$ will now
  be discarded.  If $\left[
  \varphi_e^{\Delta^0_2}(x_e)\right]^{\mathcal{B}_2}$ contained an
  element of the form $(k,n,w,z,q)$, we will give permission to the
  element $(k,n,w,z,q+1)$, and will catch it up to the point reached
  by $(k,n,w,z,q)$.

In $\mathcal{B}_1$, we will choose a $j$ larger than that used for the
input of the $s_1$-function for $\left[
  \varphi_e^{\Delta^0_2}(x_e)\right]^{\mathcal{B}_2}$.  Increase the
size of $\left[ x_e \right]^{\mathcal{B}_1}$, following the
$s_1$-function as in Lemma \ref{l4}.  If $\left[ x_e
  \right]^{\mathcal{B}_1}$ contained an
  element of the form $(k,n,w,z,q)$, we will give permission to the
  element $(k,n,w,z,q+1)$, and will catch it up to the point reached
  by $(k,n,w,z,q)$.

Now by the proofs of Lemmas \ref{l2a} and \ref{l4}, the structures
$\mathcal{B}_1$ and $\mathcal{B}_2$ are equivalence structures with
infinitely many infinite equivalence classes, and with character
$\chi(\mathcal{A})$.  It remains to show that they are not
$\Delta^0_2$ isomorphic.

Suppose that $\varphi_e^{\Delta^0_2}$ is an isomorphism from
$\mathcal{B}_1$ to $\mathcal{B}_2$.  Suppose first that $\left[ x_e
  \right]^{\mathcal{B}_1}$ is finite, of size $n$.  There was some stage $s$ at
which $\left| \left[ x_e \right]^{\mathcal{B}_1} \right| = n$, and at
which $R_e$ received attention.  This stage prevented
$\varphi_e^{\Delta^0_2}$ from being an isomorphism.

Now suppose that $\left[ x_e \right]^{\mathcal{B}_1}$ is infinite.
Now $R_e$ received attention infinitely often, which could only happen
if $\varphi_e^{\Delta^0_2}(x_e) \uparrow$.
\end{proof}

It remains to consider the case of an unbounded character $K$ with no
$s_{1}$-function. Recall from Lemma \ref{l2a} that we may construct a
computable equivalence structure $\mathcal{A}$ with
$Fin^{\mathcal{A}}$ a $\Pi _{1}^{0}$ set. If we could also construct a
computable equivalence structure $\mathcal{B}$ with
$Fin^{\mathcal{B}}$ not a $\Delta _{2}^{0}$ set, then it would follow
that $\mathcal{A}$ is not $\Delta _{2}^{0}$ isomorphic to $\mathcal{A}
$. Surprisingly, we cannot make $Fin^{\mathcal{B}}$ a complete
$\Sigma_{2}^{0}$ set, as we could when $K$ had an
$s_{1}$-function. This is because of the following result.

\begin{theorem}
\label{nc1} Let $\mathcal{A}$ be a computable equivalence structure and let
$C$ be an infinite c.e. subset of $Fin^{\mathcal{A}}$. Then
$\mathcal{A}$ possesses a computable $s$-function $f$. Furthermore, if
$\{card([c]):c\in C\}$ is unbounded, then $\mathcal{A}$ possesses a
computable $s_{1}$-function $f$. Thus, there is a computable structure
with character $\chi (\mathcal{A}) $ and with no infinite equivalence classes.
\end{theorem}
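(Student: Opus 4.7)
The plan is to adapt Lemma \ref{l3}, restricting all searches for representatives to the c.e. set $C$. Because $C\subseteq Fin^{\mathcal{A}}$, every element of $C$ lies in a finite class, so observed cardinalities stabilize; the main issue is extracting distinct class representatives out of a merely c.e. enumeration rather than out of $\omega$ itself.

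For the $s$-function, fix a computable enumeration $c_0,c_1,\dots$ of $C$. At stage $s$, compute the list $d^s_0,d^s_1,\dots$ obtained by walking through $c_0,\dots,c_{s-1}$ and keeping those that are not $E$-equivalent to any previously kept element; this is computable since $E$ is. Each $d^s_i$ stabilizes in $s$ to some $d_i$, and every $d_i$ is eventually defined, since $C$ is infinite and each class of $C$ is finite, so $C$ must meet infinitely many classes. Setting $f(i,s)=\mathrm{card}(\{a\le s : a E d^s_i\})$ when $d^s_i$ is defined and $0$ otherwise yields a computable $s$-function with $m_i=\lim_s f(i,s)=\mathrm{card}([d_i])$.

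For the $s_1$-function under the unboundedness hypothesis, I would follow the construction in the proof of Lemma \ref{l3}(ii), but constrain the candidate representatives to the enumerated portion $C_s$. Maintain tentative $a^s_0,\dots,a^s_s\in C_s$, a threshold $p_s$, and $f(i,s)=\mathrm{card}(\{a\le p_s : a E a^s_i\})$ with $f(0,s)<\cdots<f(s,s)$. At stage $s+1$, search for the least $p>p_s$ together with a sequence $b_0,\dots,b_{s+1}\in C_{s+1}$ giving strictly increasing observed counts at threshold $p$, under the stability clause $b_i=a^s_i$ whenever no new element $a$ with $p_s<a\le p$ is $E$-equivalent to any $a^s_j$ with $j\le i$. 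To verify termination, let $m$ be the largest initial index where the previous counts already equal the true class sizes, keep $b_j=a^s_j$ for $j\le m$, and use the hypothesis that $\{\mathrm{card}([c]):c\in C\}$ is unbounded to wait until $C$ has enumerated elements from classes of strictly larger sizes to fill the slots $b_{m+1},\dots,b_{s+1}$; then take $p$ large enough to reveal each chosen class fully. The resulting limits $m_i$ are strictly increasing cardinalities of actual finite classes of $\mathcal{A}$.

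The final conclusion now follows immediately from Lemma \ref{l4}(ii): the character $\chi(\mathcal{A})$ is $\Sigma_2^0$ by Lemma \ref{l1}(c), each $\langle m_i,1\rangle$ lies in $\chi(\mathcal{A})$, and applying Lemma \ref{l4}(ii) with $K=\chi(\mathcal{A})$ and $r=0$ produces a computable equivalence structure with character $\chi(\mathcal{A})$ and no infinite equivalence classes. The main obstacle is the termination argument inside the $s_1$-function construction: in the original Lemma \ref{l3}(ii) one was free to hunt for witnesses anywhere in $\mathcal{A}$, whereas here they must lie in the c.e. set $C$, and the unboundedness assumption on $\{\mathrm{card}([c]):c\in C\}$ is precisely the hypothesis that keeps the search alive as successive stages demand larger and larger class sizes.
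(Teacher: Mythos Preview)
Your overall strategy---restrict the searches of Lemma~\ref{l3} to the c.e.\ set $C$ so that every candidate representative lies in a finite class---is correct and leads to a valid proof. The paper, however, packages the $s_1$-function step differently and more economically: instead of reworking the construction of Lemma~\ref{l3}(ii) with the search constrained to $C$, it first passes to the $E$-saturation $C_1=\{a:(\exists c\in C)\,cEa\}$, fixes a computable enumeration $c_0,c_1,\dots$ of $C_1$, and defines an auxiliary computable structure $\mathcal{A}_1=(\omega,E_1)$ by $iE_1j\iff c_iEc_j$. Since $C\subseteq Fin^{\mathcal{A}}$, this $\mathcal{A}_1$ has no infinite classes, and under the unboundedness hypothesis its character is unbounded; Lemma~\ref{l3}(ii) then applies \emph{verbatim} to $\mathcal{A}_1$ as a black box, yielding an $s_1$-function whose limits lie in $\chi(\mathcal{A}_1)\subseteq\chi(\mathcal{A})$, after which Lemma~\ref{l4}(ii) finishes exactly as you indicate. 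Your route reproves Lemma~\ref{l3}(ii) inside the enumerated set, which works but obliges you to re-verify termination and convergence of the $a_i^s$; the paper's detour through $\mathcal{A}_1$ buys a clean appeal to the lemma already on the shelf.

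One small wrinkle in your write-up deserves repair. At stage $s+1$ you require $b_0,\dots,b_{s+1}\in C_{s+1}$, a fixed finite set, yet your termination argument speaks of ``waiting until $C$ has enumerated'' elements from sufficiently large classes. With $C_{s+1}$ frozen, the search over $p$ need not succeed: $C_{s+1}$ may simply not contain $s+2$ elements from classes of the required sizes. The fix---and evidently your intent---is to tie the enumeration of $C$ to the search parameter, requiring $b_i\in C_p$ rather than $b_i\in C_{s+1}$; then the unboundedness hypothesis guarantees that the search halts, and the rest of your argument goes through unchanged. (The paper's reduction to $\mathcal{A}_1$ sidesteps this bookkeeping entirely, since $\mathcal{A}_1$ already has universe $\omega$.)
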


\begin{proof}
Let $\mathcal{A}=(\omega ,E)$ where $E$ is a computable relation. Let 
\begin{equation*}
C_{1}=\{a:(\exists c\in C)cEa\}.
\end{equation*}
Then $C_{1}$ is also an infinite c.e. set. Fix its computable enumeration 
$\{c_{0},c_{1},\dots \}$ of $C_{1}$, without repetition. Now let 
\begin{equation*}
f(i,s)=card(\{x\leq s:xEc_{i}\})\text{.}
\end{equation*}
Let $\mathcal{A}_{1}=(\omega ,E_{1})$ where $iE_{1}j$ if and only if 
$c_{i}Ec_{j}$ and let $K_{1}=\chi (\mathcal{A}_{1})$. Then $f$ is clearly an 
$s$-function for $K_{1}$, and $K_{1}\subset \chi (\mathcal{A})$. It follows
from Lemma \ref{l4} that there is a computable structure with character 
$\chi (\mathcal{A})$ and with no infinite equivalence classes.

Now, suppose that $\{card([c]):c\in C\}$ is unbounded. Then $\mathcal{A}_{1}$
has unbounded character and no infinite equivalence classes. Thus, $K_{1}$
possesses an $s_{1}$-function $g$ by Lemma \ref{l3} and hence $K$ possesses
the same $s_{1}$-function.
\end{proof}

\begin{theorem}
\label{nc2} If the unbounded character $K$ has no $s_1$-function, then there
is no computable equivalence structure $\mathcal{A}$ with character $K$ such
that $Fin^{\mathcal{A}}$ is $\Sigma^0_2$ complete, or even $\Sigma^0_1$ hard.
\end{theorem}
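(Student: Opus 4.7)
The plan is to argue by contradiction using Theorem \ref{nc1}. Since every $\Sigma^0_1$ set is $\Sigma^0_2$, any $\Sigma^0_2$-complete set is automatically $\Sigma^0_1$-hard, so it suffices to rule out $\Sigma^0_1$-hardness. Suppose toward contradiction that $\mathcal{A}$ has character $K$ and $Fin^{\mathcal{A}}$ is $\Sigma^0_1$-hard; then the halting problem $K_0$ many-one reduces to $Fin^{\mathcal{A}}$ via some computable $f$, so $n \in K_0 \iff f(n) \in Fin^{\mathcal{A}}$.

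The natural c.e.\ subset of $Fin^{\mathcal{A}}$ to feed into Theorem \ref{nc1} is $V := f(K_0)$. First I would show $V$ is infinite. The reduction gives $f^{-1}(V) = K_0$: if $f(n) \in V$ then $f(n) \in Fin^{\mathcal{A}}$ and hence $n \in K_0$, while conversely $n \in K_0$ puts $f(n) \in V$ by definition. If $V$ were finite, $f^{-1}(V)$ would be a finite union of computable sets and hence computable, contradicting the noncomputability of $K_0$.

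With $V$ infinite and c.e., the contrapositive of the second half of Theorem \ref{nc1} applies: since $K$ has no computable $s_1$-function, $\{card([v]) : v \in V\}$ must be bounded, say by some $N$. Then $V \subseteq \{a : card([a]) \leq N\}$, which is $\Pi^0_1$ by Lemma \ref{l1}(a). Now $n \in K_0$ forces $card([f(n)]) \leq N$, while $n \notin K_0$ forces $[f(n)]$ infinite; hence $K_0 = \{n : card([f(n)]) \leq N\}$ is $\Pi^0_1$, and combined with its c.e.\ status this makes $K_0$ computable, the desired contradiction.

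The main obstacle is securing the infiniteness of $V$, since this is the only step that uses the reduction being into $Fin^{\mathcal{A}}$ itself rather than into some larger $\Pi^0_1$ or $\Sigma^0_2$ superset. Everything else is routine bookkeeping layered on Lemma \ref{l1}(a) and Theorem \ref{nc1}.
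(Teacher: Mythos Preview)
Your argument is correct and is essentially the paper's own proof: both take a complete c.e.\ set, form its c.e.\ image $V=f(K_0)\subseteq Fin^{\mathcal{A}}$, argue $V$ is infinite (else $K_0$ is computable), and then split on whether $\{card([v]):v\in V\}$ is bounded---bounded gives a $\Pi^0_1$ description of $K_0$, unbounded feeds Theorem~\ref{nc1} to produce an $s_1$-function. The only cosmetic difference is that the paper runs the case split forward (deriving unboundedness, then invoking Theorem~\ref{nc1}), whereas you phrase it as the contrapositive (no $s_1$-function forces boundedness); the content is identical.
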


\begin{proof}
Let $M$ be a complete c.e. set and suppose that there were a computable
function $f$ such that 
\begin{equation*}
i\in M\iff f(i)\in Fin^{\mathcal{A}}\text{.}
\end{equation*}
Then $C=\{f(i):i\in M\}$ is a c.e. subset of $Fin^{\mathcal{A}}$. If $C$ is
finite, say $C=\{c_{1},\dots ,c_{t}\}$, then 
\begin{equation*}
i\in M\iff (f(i)=c_{1}\vee f(i)=c_{2}\vee \dots \vee f(i)=c_{t})\text{,}
\end{equation*}
so that $M$ would be a computable set. Thus, $C$ is infinite. Now suppose
that $\{card([c]):c\in C\}$ is bounded by some finite $k$. Then $C$ is a
subset of the $\Pi _{1}^{0}$ set $P$, where 
\begin{equation*}
P=\{a:card([a])\leq k\}\text{.}
\end{equation*}
Since we have 
\begin{equation*}
i\in M\iff f(i)\in P\text{,}
\end{equation*}
that would imply that $M$ is a $\Pi _{1}^{0}$ set. This contradiction shows
that $\{card([c]):c\in C\}$ is unbounded. It now follows by Theorem \ref{nc1}
that $K$ possesses an $s_{1}$-function.
\end{proof}

\bigskip 

\textbf{Open Question}: Let $\mathcal{A}$ be a computable equivalence
structure having unbounded character, infinitely many infinite
equivalence classes, and no $s_1$-function. Further assume that
$Fin^{\mathcal{A}}$ is Turing incomparable with $\emptyset^\prime$.
Does such a structure exist, and if so, is this structure $\Delta
_{2}^{0}$ categorical?

\bigskip 

Theorem \ref{nc2} may provide some evidence that such a structure, if
it exists, may in fact be $\Delta^0_2$ categorical. Nevertheless, we
can show that such a structure cannot be \emph{relatively}
$\Delta^0_2$ categorical.

\begin{theorem}
If the computable equivalence structure $\mathcal{A}$ has unbounded
character and infinitely many infinite equivalence classes, then it is
not relatively $\Delta^0_2$ categorical.
\end{theorem}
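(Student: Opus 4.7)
The plan is to split on whether the character $K=\chi(\mathcal{A})$ admits a computable $s_1$-function. If it does, then Lemma \ref{l4}(ii) produces a computable equivalence structure with character $K$ and no infinite classes, so the hypothesis of Theorem \ref{u3} is met and $\mathcal{A}$ is already not $\Delta^0_2$ categorical, hence a fortiori not relatively $\Delta^0_2$ categorical. So I focus on the case where $K$ admits no computable $s_1$-function.

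In this case I appeal to the Ash--Knight--Manasse--Slaman and Chisholm characterization cited in Section 1: a computable structure is relatively $\Delta^0_2$ categorical iff it has a formally $\Sigma^0_2$ Scott family. Suppose toward a contradiction that $\mathcal{A}$ has such a family $\Phi$ with parameters $\overrightarrow{c}$. The orbit of an element lying in an infinite class disjoint from $\overrightarrow{c}$, under the automorphisms of $\mathcal{A}$ fixing $\overrightarrow{c}$, is $O=\{b:[b]\text{ is infinite and }[b]\cap\overrightarrow{c}=\emptyset\}$. By the Scott family condition each $\psi\in\Phi$ whose extension meets $O$ has its extension contained in $O$, and every element of $O$ satisfies some such $\psi$. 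Thus $O$ is a $\Sigma^0_2$-indexed union of extensions, each of which is $\Sigma^0_2$ by Ash's theorem, so $O$ itself is $\Sigma^0_2$. Consequently $Inf^{\mathcal{A}}=O\cup\bigcup\{[c_i]:c_i\in\overrightarrow{c}\text{ and }[c_i]\text{ is infinite}\}$ is the union of $O$ with a c.e.\ set, hence $\Sigma^0_2$; combined with Lemma \ref{l1}(b), $Inf^{\mathcal{A}}$ must be $\Delta^0_2$. Because the formally $\Sigma^0_2$ Scott family property is isomorphism invariant (the formulas are purely syntactic, and parameters transport under any isomorphism), the same conclusion forces $Inf^{\mathcal{B}}$ to be $\Delta^0_2$ for \emph{every} computable copy $\mathcal{B}\cong\mathcal{A}$.

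I will contradict this by exhibiting a single computable copy $\mathcal{B}\cong\mathcal{A}$ for which $Inf^{\mathcal{B}}$ fails to be $\Sigma^0_2$. Starting from the construction of Lemma \ref{l2a}, which already yields a computable copy realizing $K$ with infinitely many infinite classes, I superimpose a finite-injury diagonalization against an effective enumeration $\{V_e\}$ of the $\Sigma^0_2$ sets, with requirements $R_e: Inf^{\mathcal{B}}\neq V_e$. For each $R_e$ I introduce a fresh witness $w_e$ and control the growth of $[w_e]^{\mathcal{B}}$ by the current $\emptyset'$-approximation of ``$w_e\in V_e$'': when the guess is ``in'' the class is frozen, when it is ``out'' the class is extended. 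Because the characteristic function of $V_e$ is $\Delta^0_2$, the guesses eventually stabilize, so $[w_e]^{\mathcal{B}}$ is infinite precisely when $w_e\notin V_e$, meeting $R_e$. The main obstacle, which I expect to absorb the bulk of the argument, is preserving $\chi(\mathcal{B})=K$ and the infinite-class count while witnesses are flipped: in the present case ($K$ has no $s_1$-function) the realization scheme of Lemma \ref{l2a} is the only handle on $K$, so each frozen witness class has to be reassigned as an additional $(k,n,w,z)$-bloc inside that scheme, and the priority bookkeeping must ensure these reassignments terminate and that no spurious finite size persists in the final $\chi(\mathcal{B})$.
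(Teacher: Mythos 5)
Your Case 1 (when $K$ admits a computable $s_1$-function) is fine, and so is the first half of your Case 2: a formally $\Sigma^0_2$ Scott family does force $Inf^{\mathcal{B}}$ to be $\Sigma^0_2$, hence $\Delta^0_2$, in every computable copy $\mathcal{B}$. The gap is in the second half of Case 2, where you must exhibit a copy $\mathcal{B}$ with $Inf^{\mathcal{B}}$ not $\Sigma^0_2$. The step you defer to ``priority bookkeeping'' --- reassigning a frozen witness class of unpredictable current size $s$ to a legitimate finite size of $K$ --- is not bookkeeping; it is exactly where the absence of an $s_1$-function bites. To close off a class of size $s$ you must computably locate a confirmable target $m\geq s$ with $\langle m,1\rangle\in K$ (with correct multiplicities); the $s_1$-function is precisely the device supplying such targets in Lemma \ref{l4}(ii), and in the scheme of Lemma \ref{l2a} the only alternative is to declare the class infinite, which defeats your requirement. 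Theorems \ref{nc1} and \ref{nc2} show this obstruction is essential (an effective mechanism producing finite classes of unbounded size on demand would yield an $s_1$-function), and the paper explicitly leaves open whether, for a character with no $s_1$-function, any computable copy with $Fin^{\mathcal{B}}$ outside $\Delta^0_2$ exists at all. So your Case 2 reduces the theorem to an open problem. A smaller error: the characteristic function of a $\Sigma^0_2$ set is $\Delta^0_3$, not $\Delta^0_2$ in general, so your $\emptyset'$-guesses need not stabilize; the correct approximation of a $\Sigma^0_2$ set is ``only finitely many signals,'' which makes the freezing size even less controllable.

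The paper's own proof needs no case split and constructs no copies. It shows directly that an element $a$ of an infinite class cannot have a $\Sigma^0_2$ Scott formula: choose a disjunct $(\exists\vec{u})\,\theta$ with $\theta$ computable $\Pi_1$ that $a$ satisfies, instantiate the witnesses $\vec{c}$, and note that satisfaction of $\theta$ is determined by finite submodels. Since the character is unbounded, there is a finite class $[b]$, disjoint from everything named, with more elements than the number of witnesses lying in $[a]$; a finite-submodel transfer then shows $b$ satisfies the same formula, contradicting condition (ii) of the Scott family because no automorphism maps the infinite class $[a]$ to the finite class $[b]$. You should adopt that route.
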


\begin{proof}
Suppose, on the contrary, that an element $a$ with an infinite equivalence
class had a $\Sigma _{2}^{0}$ Scott formula $\psi (x,\overrightarrow{d})$.
Since there are only finitely many parameters $\overrightarrow{d}$ involved,
we may assume that $[a]$ does not contain any of the parameters 
$\overrightarrow{d}$. (This is where we use the assumption that there are
infinitely many infinite equivalence classes.) Then by choosing elements
$c_{1},\dots ,c_{n}$ of $\mathcal{A}$ to instantiate the existentially
quantified variables in $\psi (x,\overrightarrow{d})$, we would have a
computable $\Pi _{1}^{0}$ formula $\theta (x,\overrightarrow{d},
\overrightarrow{c})$, where $\overrightarrow{c}=c_{1},\dots ,c_{n}$, that is
satisfied by $a$.

Now, it is easy to see that for \emph{any} submodel $\mathcal{M}$ of
$\mathcal{A}$ that contains $a$, $\overrightarrow{d}$ and
$\overrightarrow{c}$, we have $\mathcal{M}\models \theta
(a,\overrightarrow{d},\overrightarrow{c} )$. In fact, since our
structures are relational, $\mathcal{A}\models \theta
(b,\overrightarrow{d},\overrightarrow{e})$ iff and only if
$\mathcal{M} \models \theta
(b,\overrightarrow{d},\overrightarrow{e})$ for all finite submodels
$\mathcal{M}$ of $\mathcal{A}$ that contain $b$, $\overrightarrow{d}$
and $\overrightarrow{e}$.

Thus, in particular, for the finite subset
$C=\{a,\overrightarrow{c},\overrightarrow{d}\}$ of $\omega $, we have
$\mathcal{C}=(C,E^{C})\models \theta
(a,\overrightarrow{d},\overrightarrow{c})$. Suppose that
$\overrightarrow{c}$ contains $m\leq n$ elements of $[a]$ and choose
$b$ such that $[b]\cap C=\emptyset $ and $m<card([b])<\omega $. (Here
we use the fact that the character of $\mathcal{A}$ is unbounded.) Let
$\mathcal{B}\simeq \mathcal{C}$ contain $m$ elements of $[b]$
(including $b$), together with $ C\setminus \lbrack a]$. Let
$\overrightarrow{e}$ denote the image of $\overrightarrow{c}$ under
the isomorphism between $\mathcal{C}$ and $\mathcal{B}$. Then
$\mathcal{B} \models \theta(b,\overrightarrow{d},\overrightarrow{e})$. Furthermore, let
$\mathcal{B}^{\prime }$ be any finite submodel of $\mathcal{A}$ such
that $\mathcal{B}\subseteq \mathcal{B}^{\prime }$. Then it is easy
to extend $\mathcal{C}$ to a finite submodel $\mathcal{C}^{\prime }$
that is isomorphic to $\mathcal{B}^{\prime }$ (where the isomorphism
fixes $\overrightarrow{d}$ pointwise and takes $a$ to $b$).  Thus,
$\mathcal{B}^{\prime }\models \theta (b,\overrightarrow{d},
\overrightarrow{e})$ as well. It follows that $\mathcal{A}\models
\theta (b, \overrightarrow{d},\overrightarrow{e})$. Hence,
$\mathcal{A}\models \psi (b, \overrightarrow{d})$.

But there certainly can be no automorphism of $\mathcal{A}$ mapping
$a$ to $b $, since $[a]$ is infinite and $[b]$ is finite. Thus, in
fact, $a$ cannot have a $\Sigma_{2}^{0}$ Scott formula.
\end{proof}

\bigskip

We conclude this section by looking at $\Delta _{3}^{0}$ categoricity.

\begin{theorem}
Every computable equivalence structure is relatively $\Delta _{3}^{0}$
categorical.
\end{theorem}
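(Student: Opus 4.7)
The plan is to invoke the syntactic characterization by Ash, Knight, Manasse, Slaman and Chisholm recalled in Section 1: for a computable structure, relative $\Delta_3^0$ categoricity is equivalent to the existence of a formally $\Sigma_3^0$ Scott family. I will therefore exhibit such a family, without parameters, for the given equivalence structure $\mathcal{A}$.

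For each $n$, each quantifier-free atomic diagram $D(x_1,\ldots,x_n)$ specifying, for every $i,j$, whether $x_i = x_j$ and whether $x_i E x_j$, and each map $s$ from the $E$-equivalence blocks of $D$ into $\omega \cup \{\infty\}$, let $\psi_{D,s}(\vec{x})$ be the conjunction of $D$ with one formula $\phi_{s(P)}(x_{\min P})$ per $E$-block $P$, where $\phi_k(x)$ asserts $|[x]| = k$. By Lemma \ref{l1}(a), for finite $k$ the formula $\phi_k$ is a finitary $\Sigma_1 \wedge \Pi_1$ formula, while $\phi_\infty(x)$ is the computable $\Pi_2$ infinitary conjunction $\bigwedge_{m} \exists y_1, \ldots, y_m \bigl( \bigwedge_{i \ne j} y_i \ne y_j \wedge \bigwedge_i y_i E x \bigr)$. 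Hence each $\psi_{D,s}$ is a computable $\Pi_2$, and a fortiori computable $\Sigma_3$, formula, and the collection of all such formulas has a computable (hence $\Sigma_3^0$) index set.

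Condition (i) of a Scott family is immediate: every tuple $\vec{a}$ satisfies the $\psi_{D,s}$ recording its own atomic type and class sizes. The main step is condition (ii). Suppose $\vec{a}$ and $\vec{b}$ both satisfy $\psi_{D,s}$, and let $P_1,\ldots,P_m$ be the $E$-blocks of $D$, so that $C^a_j := [a_{\min P_j}]^{\mathcal{A}}$ and $C^b_j := [b_{\min P_j}]^{\mathcal{A}}$ are classes of $\mathcal{A}$ of size $s(P_j)$, with the $C^a_j$ (respectively the $C^b_j$) pairwise distinct. I construct an automorphism $H$ of $\mathcal{A}$ sending $\vec{a}$ to $\vec{b}$ in two pieces: for each $j$, a size-preserving bijection $C^a_j \to C^b_j$ extending the partial map $a_i \mapsto b_i$ for $i \in P_j$ (well-defined because $D$ encodes equality); and for the remaining classes, a size-preserving bijection between the two complements $\mathcal{A}\setminus\bigcup_j C^a_j$ and $\mathcal{A}\setminus\bigcup_j C^b_j$, which exists because both complements realize the same multiset of class sizes, namely the character of $\mathcal{A}$ with the multiset $\{s(P_j)\}_j$ removed. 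The only potentially delicate point in the plan is the uniform expressibility of ``$[x]$ is infinite'' as a computable $\Pi_2$ formula; this is precisely the $\Pi_2^0$ predicate of Lemma \ref{l1}(b), captured cleanly by the infinitary conjunction displayed above, which keeps the whole Scott family inside computable $\Pi_2 \subseteq \Sigma_3$.
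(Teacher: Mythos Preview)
Your proof is correct and follows essentially the same approach as the paper. The paper's proof is a two-sentence sketch observing that elements with infinite class have a $\Pi_2^0$ Scott formula while elements with finite class have a $\Delta_2^0$ Scott formula, hence every tuple has a $\Sigma_3^0$ Scott formula; your argument simply unpacks this, writing down the formulas $\psi_{D,s}$ explicitly and verifying the automorphism condition in detail.
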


\begin{proof}
Any element with an infinite equivalence class has a $\Pi _{2}^{0}$ Scott
formula, while the other elements even have $\Delta _{2}^{0}$ Scott
formulas. Thus, every tuple $\langle a_{1},\dots ,a_{n}\rangle $ has a 
$\Sigma _{3}^{0}$ Scott formula.
\end{proof}

\end{document}